\let\wfs@comment@comment\comment
\let\comment\@undefined
\let\wfs@changes@comment\comment
\let\comment\@undefined
\newcommand\comment{%
    \ifthenelse{\equal{\@currenvir}{comment}}
    {\wfs@comment@comment}
    {\wfs@changes@comment}%
}
\def\namedlabel#1#2{\begingroup
    #2%
    \def\@currentlabel{#2}%
    \phantomsection\label{#1}\endgroup
}
\theoremstyle{definition}
\newtheorem{theorem}{Theorem}[section]
\newtheorem{definition}[theorem]{{{Definition}}}
\newtheorem{example}[theorem]{{{Example}}}
\newtheorem{notation}[theorem]{{{Notation}}}
\newtheorem{remark}[theorem]{{{Remark}}}
\newtheorem{corollary}[theorem]{{{Corollary}}}
\newtheorem{proposition}[theorem]{{{Proposition}}}
\newtheorem{lemma}[theorem]{{{Lemma}}}
\newcommand{\mL}{\mathcal{L}}
\newcommand{\mP}{\mathcal{P}}
\newcommand{\mA}{\mathcal{A}}
\newcommand{\I}{\mathcal{I}}
\newcommand{\mH}{\mathcal{H}}
\newcommand{\mZ}{\mathcal{Z}}
\newcommand{\len}{\textup{len}}
\newcommand{\h}{\textup{h}}
 \newcommand{\RR}{\mathbb{R}}
 \newcommand{\cC}{\textup{\bf{C}}}
\newcommand{\F}{\mathbb{F}}
\newcommand{\one}{{\mathbf{1}}}
\newcommand{\zero}{{\mathbf{0}}}
\DeclareMathOperator{\cyc}{cyc}
\DeclareMathOperator{\cl}{cl}
\providecommand{\keywords}[1]
{
\noindent\textbf{{\small \textbf{Keywords.}}} #1
}
\title{The Cyclic Flats of an $\mL$-Polymatroid}
\author[1]{Eimear Byrne}
\affil[1,2]{School of Mathematics and Statistics, University College Dublin, Belfield, Ireland}
\author[2]{Andrew Fulcher}
\begin{document}
\maketitle

\begin{abstract}
   We consider structural properties of $\mL$-polymatroids, especially those defined on a finite complemented modular lattice $\mL$. We introduce a set of cover-weight axioms and establish a cryptomorphism between these axioms and the rank axioms of an $\mL$-polymatroid. We introduce the notion of a cyclic flat of an $\mL$-polymatroid and study properties of its lattice of cyclic flats. 
   We show that the weighted lattice of cyclic flats of an $\mL$-polymatroid 
   $\mP$, along with the atomic weights of $\mP$, is sufficient to define its rank function on $\mL$. In our main result, we characterize those weighted lattices $(\mZ,\lambda)$ such that $\mZ\subseteq\mathcal{L}$ is the collection of cyclic flats of an $\mL$-polymatroid.   
\end{abstract}

\keywords{cyclic flat, polymatroid, modular lattice, cryptomorphism, submodular function}

\smallskip
\noindent {\bf MSC2020.} 05B35, 06C10, 06C15

\tableofcontents


\section{Introduction}
The lattice of cyclic flats of a matroid plays an important role in matroid theory. Every finite lattice is isomorphic to the lattice of cyclic flats of some matroid \cite{bonin_demier,sims}, and transversal matroids can be characterized in terms of their cyclic flats and their ranks \cite{bkdm,bryl_transv,ingleton,mason}. 
There have also been some interesting studies of invariants of matroids and polymatroids in relation to their cyclic flats. For example, in \cite{eberhardt}, an expression for the Tutte polynomial of a matroid is given in terms of the isomorphism class of the lattice of cyclic flats, along with their sizes and ranks. The ${\cal{G}}$-invariant of a matroid and its connection with the cyclic flats of a matroid were considered in \cite{bonin_Ginv,bonin_kung_Ginv}. More precisely, it was shown that these invariants can be computed from the configuration of a matroid, which carries strictly less information than its weighted lattice of cyclic flats.

A matroid can be determined by its lattice of cyclic flats, along with their ranks. Specifically, knowledge of the lattice of cyclic flats, along with the ranks of its elements, is sufficient to reconstruct the rank function of the matroid. However, the lattice of cyclic flats of a matroid has far fewer elements than its lattice of flats, providing a more compact representation of the matroid. See \cite{fghw} for an explicit algorithm to reconstruct the lattice of flats of a matroid from its lattice of cyclic flats. The results of \cite{bonin_demier,sims} have also been extended to more general settings. In \cite{Csirm20}, it is shown that a polymatroid is completely determined by its weighted lattice of cyclic flats, along with the ranks of its singleton sets. Additionally, \cite{AlfByr22} provides a cryptomorphism for cyclic flats in the context of $q$-matroids.

In this paper, we consider the theory of polymatroids in the context of more general lattices, with the main focus on finite complemented modular lattices, which comprise a more general class than subset and subspace lattices. More precisely, any finite complemented modular lattice is a direct product of a Boolean lattice and projective geometries \cite[Chapter 4, Section 7]{birkhoff_lattice}. We define an $\mL$-polymatroid as a non-negative, bounded, increasing, and submodular function on a lattice $\mL$. We introduce the concept of a cyclic flat of an $\mL$-polymatroid when $\mathcal{L}$ is complemented and modular. Our main contribution is to extend the results of \cite{AlfByr22,bonin_demier,Csirm20} to the more general setting of an $\mL$-polymatroid. That is, we derive a cryptomorphism for $\mL$-polymatroids in terms of a weighted lattice of cyclic flats $(\mZ,\lambda)$, along with the ranks of the atoms of $\mL$. To achieve this, we propose a list of cyclic flat axioms that a weighted lattice $(\mZ,\lambda)$ must satisfy in order to be embedded into the weighted lattice $(\mL,\rho)$ of an $\mL$-polymatroid as its lattice of cyclic flats.  

We remark that generalizations of matroids and polymatroids have been in the literature for some time. In \cite{faigle}, matroid theory was extended from Boolean lattices to partially ordered sets. There have been several papers on supermatroids, in which matroid theory on various classes of lattices is considered, such as distributive and modular lattices \cite{Dunstan,MAEHARA_supermat,tardos_supermat,WILD_supermat}. More recently, $q$-analogues of matroids and polymatroids have been studied \cite{JP18,BCJ_JCTB,bcij,gpr_q_complex,gorla2019rank,shiromoto19}. 
However, there are yet relatively few works on generalizations of polymatroids to arbitrary finite complemented modular lattices; see \cite{alfarano2023critical,GorlaSalizz_Latroid,Vertigan2004Latroids}. 

This paper is organized as follows. In Section \ref{sec:prelim}, we establish notation and give some fundamentals. In Section \ref{sec:covlatt}, we introduce the cover-weight axioms and show they are equivalent to the rank axioms of an $\mL$-polymatroid, giving a cryptomorphism that extends results of \cite{BCJ17,BF22}. In Section \ref{sec:cyclicflat}, we introduce the notion of a cyclic element of an $\mL$-polymatroid. Moreover, we describe the lattice of cyclic flats of an $\mathcal{L}$-polymatroid. We show that the rank function of an $\mL$-polymatroid is completely determined by its weighted lattice of cyclic flats, along with the ranks of the atoms of $\mL$. We introduce the concept of a weakly decomposable function, which is crucial for capturing the behavior of the rank function of an $\mL$-polymatroid in terms of its weighted lattice of cyclic flats. Section \ref{sec:main} contains the main contribution of this paper. We introduce a set of six cyclic flat axioms and demonstrate a cryptomorphism for $\mL$-polymatroids. In Section \ref{sec:comments}, we close the paper with some commentary on the cyclic flat axioms given in Section \ref{sec:main} and their relations to cyclic flat axioms of matroids, polymatroids, and $q$-matroids.

\section{Preliminaries}\label{sec:prelim}
The primary difference between polymatroids and their $q$-analogues is determined by the underlying lattices upon which they are defined; the former is defined on the power set of a finite set, and the latter is defined on the subspace lattice of a finite-dimensional vector space over a finite field. Both of these lattices are \emph{complemented modular} lattices, but in the case of the Boolean lattice, complements are unique. In terms of the underlying lattice properties, many of the results we present in this paper only rely on the lattice being complemented and modular. For background reading on lattice theory, see \cite{birkhoff_lattice,gratz_lattice,romanlattices}. We include the following fundamental definitions and notation.

\begin{definition}
Let $\mathcal{L}$ be a finite lattice with meet $\wedge$, join $\vee$, and partial order $\leq$. We write $\one=1_\mathcal{L}$ and $\zero=0_\mathcal{L}$.
Let $A, B\in\mathcal{L}$.
\begin{enumerate}
    \item If $A \wedge B = \zero$, we write $A \dot \vee B = A \vee B$. 
    \item An {\em interval} $[A,B]\subseteq\mathcal{L}$ is the set of all $X\in\mathcal{L}$ such that $A\leq X\leq B$. It defines the {\em interval sublattice} $([A,B],\leq,\vee,\wedge)$. We write $\I(\mL)$ to denote the collection of intervals of $\mL$. We define $(A,B)=\{C\in[A,B]:A< C < B\}$.
    \item Let $C\in  [A,B]$. We say that $D$ is a \emph{complement} of $C$ in $[A,B]$  if $C \wedge D = A$ and $C \vee D = B$. We define $\textbf{C}(A)=\{C \in \mathcal{L}: C\wedge A=\zero, C \vee A = \one \}$. The lattice $\mathcal{L}$ is {\em complemented} if every $C \in \mL$ has a complement in $\mL$. The lattice $\mL$ is {\em relatively complemented} every interval $[A,B]\subseteq \mathcal{L}$ is complemented.
    \item For $A<B$, if $X\in[A,B]$ implies that $X=A$ or $X=B$, then $B$ is a {\em cover} of $A$ and we write $A\lessdot B$. We also say that $A$ is \emph{covered} by $B$ or that $B$ {\em covers} $A$.
    \item An {\em atom} of $\mL$ is any element that covers $\textbf{0}$. A {\em coatom} of $\mL$ is any element that \textbf{1} covers. 
    We define $\mA([A,B])=\{ X \in [A,B]: A \lessdot X \}$ and 
    $\mH([A,B])=\{X \in [A,B] : X \lessdot B\}$. We also define
    $\mA(B)=\mA([\zero,B])$ and $\mH(B)=\mH([\zero,B])$.
    \item 
    A {\em chain} from $A$ to $B$ is a totally ordered subset of $[A,B]$ with respect to $\leq$ that contains $A$ and $B$. A chain from $A$ to $B$ is {\em maximal} in $[A,B]$ if it is not properly contained in any other chain from $A$ to $B$. 
    A {\em chain} from $A$ to $B$ is written in the form
    $ A = X_0 < \cdots < X_k=B $, in which case we say that the chain has {\em length} $k$.
    \item We define the {\em height} of $B$, which we denote by $\h(B)$, to be the maximum length of all chains from \textbf{0} to $B$. The {\em length} of $[A,B]$ is defined to be $\len([A,B])=\h(B)-\h(A)$.
    \item Let $S \subseteq \mA(\mL)$ be non-empty and let $A$ be the join of the elements of $S$. We say that $S$ is {\em independent} if
    $\h(A)=|S|$.
    \item The lattice $\mL$ is {\em modular} if for all $A,B,C \in \mL$, we have that
    $A \geq C$ implies $(A \wedge B) \vee C = A \wedge (B \vee C).$ This property is referred to as the \emph{modular law}.
    \item The lattice $\mathcal{L}$ is \emph{atomic} if every element of $\mathcal{L}$ can be written as the join of atoms of $\mathcal{L}$.
    \item For lattices $\mathcal{L}_i$, $i\in [n]$, the \emph{direct product} $\mathcal{L}_1\times \mathcal{L}_2\times\cdots\times \mathcal{L}_n$ is the lattice with elements $(a_1,a_2,\dots,a_n)$, $a_i\in\mathcal{L}_i$ for each $i\in[n]$ and order defined by $(a_1,\dots,a_n)\leq (b_1,\dots,b_n)$ if and only if $a_i\leq b_i$ in $\mathcal{L}_i$ for each $i\in[n]$.
    \item For lattices $\mathcal{L}_1$ and $\mathcal{L}_2$ with respective joins $\vee_1$ and $\vee_2$, suppose that $\Phi:\mathcal{L}_1\to\mathcal{L}_2$ is a map. We say that $\Phi$ is a \emph{lattice isomorphism} when it is bijective and $\Phi(A\vee_1 B)=\Phi(A)\vee_2 \Phi(B)$ for every $A,B\in\mathcal{L}_1$. We then call $\mathcal{L}_1$ and $\mathcal{L}_2$ \emph{isomorphic} and write $\mathcal{L}_1\cong\mathcal{L}_2$.
\end{enumerate}
\end{definition}
We remark that the notation $\dot\vee$ is not standard. However we choose to include it to provide a reminder in some of the more technical proofs.
For the remainder of this paper, $\mL$ will denote a finite lattice with meet $\wedge$, join $\vee$, minimal element $\zero$, maximal element $\one$, and order $\leq$. 
If the lattice $\mL$ is modular, then all its maximal chains have the same length (the Jordan-Dedekind condition). 
Every complemented modular lattice is relatively complemented and atomic. We will make frequent use of these properties in this paper. Note that in a complemented modular lattice, a given element need not have a unique complement (for example, in a subspace lattice, elements have multiple complements).

We now recall fundamental results in lattice theory that underpin our use of complemented modular lattices. See {\cite[Chapter IV.6, Theorem 10]{birkhoff_lattice}} and {\cite[Chapter VIII.9, Theorem 20]{birkhoff_lattice}}.
\begin{theorem}\label{thm:compmod_is_direct_prod}
    \begin{enumerate}
        \item Any complemented modular lattice is the direct product of a Boolean lattice and projective geometries.
        \item Every finite atomic projective geometry of height greater than three is isomorphic to the lattice of subspaces of $\F^{m}$ for some finite field $\F$ and positive integer $m$.
    \end{enumerate}
\end{theorem}

We now recall the following well-known theorem for modular lattices.
\begin{theorem}[Isomorphism Theorem for Modular Lattices]\label{thm:isom_thm_lattices}
    Let $\mL$ be a modular lattice. Let $A,B\in\mathcal{L}$. Then the intervals $[A,A\vee B]$ and $[A\wedge B,B]$ are isomorphic.
\end{theorem}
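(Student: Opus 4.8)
The plan is to exhibit an explicit pair of mutually inverse, order-preserving maps between $[A\wedge B,B]$ and $[A,A\vee B]$. Define $\varphi\colon [A\wedge B,B]\to[A,A\vee B]$ by $\varphi(X)=X\vee A$ and $\psi\colon [A,A\vee B]\to[A\wedge B,B]$ by $\psi(Y)=Y\wedge B$. First I would check these are well defined: if $A\wedge B\le X\le B$, then $A\le X\vee A\le B\vee A=A\vee B$, so $\varphi(X)\in[A,A\vee B]$; dually, if $A\le Y\le A\vee B$, then $A\wedge B\le Y\wedge B\le (A\vee B)\wedge B=B$, so $\psi(Y)\in[A\wedge B,B]$. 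Both maps are visibly order-preserving, since joining (respectively meeting) with a fixed lattice element is monotone.

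Next I would show $\varphi$ and $\psi$ are mutually inverse, which is the only place modularity is used. For $X\in[A\wedge B,B]$ we have $B\ge X$, so the modular law (with the roles $B\ge X$) gives $(B\wedge A)\vee X=B\wedge(A\vee X)$; since $X\ge A\wedge B$, the left-hand side equals $X$, whence $\psi(\varphi(X))=(X\vee A)\wedge B=X$. Symmetrically, for $Y\in[A,A\vee B]$ we have $Y\ge A$, so the modular law gives $(Y\wedge B)\vee A=Y\wedge(B\vee A)$; since $Y\le A\vee B$, the right-hand side equals $Y$, whence $\varphi(\psi(Y))=(Y\wedge B)\vee A=Y$. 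Hence $\varphi$ is a bijection with inverse $\psi$.

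Finally I would conclude that $\varphi$ is a lattice isomorphism. Since $\varphi$ is a bijection and both $\varphi$ and its inverse $\psi$ are order-preserving, $\varphi$ is an order isomorphism; as binary meets and joins in a lattice are determined by the order relation, an order isomorphism automatically satisfies $\varphi(X_1\vee X_2)=\varphi(X_1)\vee\varphi(X_2)$ and $\varphi(X_1\wedge X_2)=\varphi(X_1)\wedge\varphi(X_2)$. Therefore $[A\wedge B,B]\cong[A,A\vee B]$. (If one prefers a direct verification, $\varphi(X_1\vee X_2)=(X_1\vee X_2)\vee A=(X_1\vee A)\vee(X_2\vee A)$ is immediate, and preservation of $\wedge$ follows from one further application of the modular law.)

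I do not expect a genuine obstacle, as this is the classical Isomorphism Theorem; the one point demanding care is instantiating the modular law in exactly the right form and checking that the side conditions $X\ge A\wedge B$ and $Y\le A\vee B$ are precisely what collapse the two composites $\psi\circ\varphi$ and $\varphi\circ\psi$ to the respective identities.
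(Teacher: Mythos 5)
Your proof is correct and is exactly the standard textbook argument for this classical result; the paper itself does not supply a proof but simply cites it (``see e.g.\ \cite{gratz_lattice}''), and the maps $X\mapsto X\vee A$ and $Y\mapsto Y\wedge B$ together with the two applications of the modular law are precisely what one finds there. The only small remark is that your parenthetical ``direct verification'' of $\wedge$-preservation is unnecessary once you have observed that a bijection that is order-preserving in both directions is automatically a lattice isomorphism.
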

An immediate consequence of Theorem~\ref{thm:isom_thm_lattices} is that if $\mL$ is modular, then for all $A,B\in\mathcal{L}$, we have $\textup{h}(A)+\textup{h}(B)=\textup{h}(A\vee B)+\textup{h}(A\wedge B)$. We will use this property frequently in later proofs.

We now state the definition of an $\mathcal{L}$-polymatroid, which naturally generalises the notion of a $q$-polymatroid to more general lattices.

\begin{definition}\label{def:q-polymatroid}
    An $\mL$-\emph{polymatroid} is a pair $\mP=(\mL,r)$ where $r:\mathcal{L}\to\mathbb{R}$ is a function satisfying the following properties for all $A,B\in\mL$.
    \begin{description}
        \item[\namedlabel{r1}{{\rm (R1)}}] $r(\zero)=0$,
        \item[\namedlabel{r2}{{\rm (R2)}}] if $A \leq B$, then $r(A) \leq r(B)$, i.e., $r$ is non-decreasing on $\mL$,
        \item[\namedlabel{r3}{{\rm (R3)}}] $r(A)+r(B)\geq r(A\vee B)+r(A\wedge B)$, i.e., $r$ is submodular on $\mL$.
    \end{description}
\end{definition}

If $\mL = \mL(\F_q^n)$ (the subspace lattice of the vector space $\mathbb{F}_q^n$), we say that $\mP$ is a $q$-polymatroid. If, furthermore, $r$ is integer-valued and $r(A)\leq\dim(A)$ for all $A\in\mathcal{L}$, we say that $\mP$ is a $q$-matroid. If instead $\mL$ is a Boolean (subset) lattice, then $\mP$ is a polymatroid, and if, furthermore, $r$ is integer-valued and $r(A)\leq|A|$ for all $A\in\mathcal{L}$, then $\mP$ is a matroid. Definition~\ref{def:q-polymatroid} directly extends the definition of a $q$-polymatroid given in \cite{shiromoto19} (see also \cite{gorla2019rank}).

The remainder of this section is a collection of foundational lattice theoretic results that will be of use later in this paper.

\begin{lemma}\label{lem:gen_latt_distrib}
    For any $A,B,C\in\mathcal{L}$, we have $(A\wedge B)\vee (A\wedge C)\leq A\wedge (B\vee C).$
\end{lemma}

\begin{lemma}\label{lem:dir_join_assoc}
    Let $\mL$ be a modular lattice.
    Let $A,B,C\in\mathcal{L}$ and suppose that $A\wedge B=\zero$ and $(A\vee B)\wedge C=\zero$. Then we have $A\wedge(B\vee C)=\zero$. 
\end{lemma}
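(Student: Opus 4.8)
The plan is to show that the element $X := A \wedge (B \vee C)$ is below both $A$ and $B$, so that the hypothesis $A \wedge B = \zero$ forces $X = \zero$. The only real content is producing a clean upper bound for $X$ in terms of $B$, and this is where modularity enters.

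First I would compute $(A \vee B) \wedge (C \vee B)$. Since $B \leq A \vee B$, the modular law (in the form $a \geq c \implies (a \wedge b) \vee c = a \wedge (b \vee c)$, applied with $a = A \vee B$, $b = C$, $c = B$) gives
\[
(A \vee B) \wedge (C \vee B) = \big((A \vee B) \wedge C\big) \vee B = \zero \vee B = B,
\]
using the hypothesis $(A \vee B) \wedge C = \zero$.

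Next I would observe that $X = A \wedge (B \vee C) \leq A \leq A \vee B$ and also $X \leq B \vee C = C \vee B$, so $X \leq (A \vee B) \wedge (C \vee B) = B$ by the previous step. Combining $X \leq A$ and $X \leq B$ yields $X \leq A \wedge B = \zero$, hence $A \wedge (B \vee C) = \zero$, as required. (Note that writing $A \dot\vee B$ and $(A \dot\vee B) \wedge C = \zero$ in the statement is just bookkeeping: the content is the two meet-conditions, and the conclusion $A \wedge (B \dot\vee C) = \zero$ is exactly what we have shown, with the dotted join being well-defined precisely because $A \wedge (B \vee C) = \zero$.)

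The proof is short, so there is no serious obstacle; the one point requiring care is choosing the correct instance of the modular law — it must be applied with $B$ as the "small" element sitting below $A \vee B$, so that $C$ gets absorbed and the hypothesis $(A \vee B) \wedge C = \zero$ can be used. A naive attempt to expand $A \wedge (B \vee C)$ directly by modularity does not work, since neither $B$ nor $C$ need lie below $A$.
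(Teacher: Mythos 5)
Your proof is correct, but it takes a genuinely different route from the paper's. The paper argues by height arithmetic: since $(A\dot\vee B)\dot\vee C$ is an independent join, $\h(A\vee B\vee C)=\h(A)+\h(B)+\h(C)=\h(A)+\h(B\dot\vee C)$, and then the modular identity $\h(A\vee(B\vee C))=\h(A)+\h(B\vee C)-\h(A\wedge(B\vee C))$ (a consequence of Theorem~\ref{thm:isom_thm_lattices}) forces $\h(A\wedge(B\vee C))=0$. That argument leans on the finite-length/Jordan--H\"older machinery for modular lattices. Your version is purely order-theoretic: one application of the modular law yields $(A\vee B)\wedge(C\vee B)=B$, and then the observation that $A\wedge(B\vee C)$ sits below both $A\vee B$ and $C\vee B$, hence below $B$, hence below $A\wedge B=\zero$, finishes it. Your route is more elementary and slightly more general --- it makes no use of a height function, so it works for a modular lattice of arbitrary (even infinite) length --- whereas the paper's height computation is quicker to state in a setting where the height function is already taken for granted. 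Both are valid; yours trades a small amount of lattice manipulation for independence from the finiteness hypothesis.
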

\begin{proof}
    Since $(A\vee B)\wedge C=\zero$, we have $B\wedge C=\zero$. Moreover, since $A\wedge B=\zero$, along with the associativity of the join, we obtain  
    $$\textup{h}( A\vee B\vee C)=\textup{h}((A\dot\vee B)\dot\vee C)=\textup{h}(A)+\textup{h}(B)+\textup{h}(C)=\textup{h}(A)+\textup{h}(B\dot\vee C).$$
    Furthermore, since $\textup{h}(A\vee(B\vee C))=\textup{h}(A)+\textup{h}(B\vee C)-\textup{h}(A\wedge(B\vee C))$, it follows that $\textup{h}(A\wedge(B\vee C))=0$. 
\end{proof}

The following is an easy consequence of Theorem~\ref{thm:compmod_is_direct_prod}.
\begin{corollary}\label{cor:extend_to_complement}
    Let $\mL$ be a complemented modular lattice.
    Let $A,B\in\mathcal{L}$ and suppose that $A\wedge B=\zero$. Then there exists $A^c\in\mathbf{C}(A)$ such that $B\leq A^c$.
\end{corollary}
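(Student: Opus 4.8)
The plan is to construct the desired complement $A^c$ explicitly from a complement of the join $A \vee B$, rather than attempting to ``enlarge'' an arbitrary complement of $A$ directly. First I would use the hypothesis that $\mL$ is complemented to pick some $C \in \textup{\textbf{C}}(A \vee B)$, so that $C \wedge (A \vee B) = \zero$ and $C \vee (A \vee B) = \one$. The candidate is then $A^c := B \vee C$, which visibly satisfies $B \leq A^c$, so the only work is to check that $A^c$ is a complement of $A$ in $\mL$.

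The join condition is immediate: $A \vee A^c = A \vee B \vee C = (A \vee B) \vee C = \one$ by the choice of $C$. For the meet condition $A \wedge A^c = \zero$ I would invoke Lemma~\ref{lem:dir_join_assoc}. To meet its hypotheses, note first that $B \wedge C \leq (A \vee B) \wedge C = \zero$, so $B \wedge C = \zero$ and hence $B \vee C = B \dot\vee C$. Since $A \wedge B = \zero$ by hypothesis and $(A \dot\vee B) \wedge C = (A \vee B) \wedge C = \zero$ by the choice of $C$, Lemma~\ref{lem:dir_join_assoc} applies and gives $A \wedge (B \dot\vee C) = \zero$, that is, $A \wedge A^c = \zero$. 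Therefore $A^c \in \textup{\textbf{C}}(A)$ with $B \leq A^c$, as claimed.

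The only genuine obstacle is the realization that the problem reduces to a single application of Lemma~\ref{lem:dir_join_assoc} after passing to a complement of $A \vee B$; once that reformulation is in hand, the argument is routine, the sole bookkeeping point being the auxiliary observation $B \wedge C = \zero$ needed to write $B \dot\vee C$ and feed it into the lemma. Modularity is used only through Lemma~\ref{lem:dir_join_assoc} itself.
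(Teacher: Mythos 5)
Your proof is correct and follows exactly the same route as the paper: pick $C \in \textup{\textbf{C}}(A \dot\vee B)$, set $A^c = B \dot\vee C$, and invoke Lemma~\ref{lem:dir_join_assoc} to get $A \wedge A^c = \zero$. You simply spell out the two small details the paper leaves implicit — that $B \wedge C = \zero$ so $B \dot\vee C$ makes sense, and that $A \vee A^c = \one$ holds by associativity — which is good bookkeeping but not a different argument.
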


By Corollary~\ref{cor:extend_to_complement}, if $A,B\in\mathcal{L}$ and $A\wedge B=\zero$, then $B$ is a relative complement of $A$ in $[\zero,A \vee B]$. Therefore, we say that $B$ can be {\em extended} to a complement of $A$ in $\mL$.

\begin{definition}\label{def:decomposes}
    Let $A,B\in\mathcal{L}$ and let $B^c\in \mathbf{C}(B)$. 
    If $A=(A\wedge B)\dot\vee(A\wedge B^c)$, we say that $B^c$ \emph{decomposes} $A$; equivalently, $A\wedge B^c$ is a relative complement of $A\wedge B$ in the interval $[\zero,A]$.
    The set of elements of $\mathbf{C}(B)$ that decompose $A$ is denoted by 
    $$\mathbf{C}(B;A)=\{ B^c \in \mathbf{C}(B) : (A\wedge B) \dot\vee (A \wedge B^c) = A \}.$$ 
\end{definition}

Note that, in general, the property in Definition~\ref{def:decomposes} is not satisfied by all complements of a given element. However, we will frequently use elements with this property in subsequent sections.

\begin{proposition}\label{prop:compdecomp}
    Let $\mL$ be a complemented modular lattice.
    Let $B\leq A\in\mathcal{L}$. Then $B^c\in\mathbf{C}(B;A)$ for any $B^c\in \mathbf{C}(B)$.
\end{proposition}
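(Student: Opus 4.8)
The plan is to unwind the definition of $\textup{\textbf{C}}(A;B)$ and reduce the whole statement to a single application of modularity. Fix $A^c\in\textup{\textbf{C}}(A)$, so that $A\wedge A^c=\zero$ and $A\vee A^c=\one$ by definition of complement in $\mL$. Since $A\leq B$ we have $A\wedge B=A$, so the condition defining $\textup{\textbf{C}}(A;B)$, namely $(A^c\wedge B)\,\dot\vee\,(A\wedge B)=B$, is equivalent to the two assertions $(A^c\wedge B)\wedge A=\zero$ and $(A^c\wedge B)\vee A=B$.

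For the first assertion, I would just use associativity and commutativity of the meet together with $A\leq B$: $(A^c\wedge B)\wedge A=A^c\wedge(B\wedge A)=A^c\wedge A=\zero$, the last equality because $A^c\in\textup{\textbf{C}}(A)$. For the second, I would invoke the modular law with the large element $B$, middle element $A^c$, and small element $A$; since $B\geq A$, modularity gives $(B\wedge A^c)\vee A=B\wedge(A^c\vee A)=B\wedge\one=B$, again using $A^c\in\textup{\textbf{C}}(A)$. Combining the two displays shows that $A^c$ decomposes $B$ in the sense of Definition~\ref{def:decomposes}, i.e. $A^c\in\textup{\textbf{C}}(A;B)$, and since $A^c$ was arbitrary this proves the proposition.

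I do not expect any genuine obstacle here: the only points needing care are remembering that $\textup{\textbf{C}}(A)$ refers to complements in all of $\mL$ (so that both $A\vee A^c=\one$ and $A\wedge A^c=\zero$ are available globally, not merely relative complements in some interval), and correctly matching the hypothesis $A\leq B$ to the ordering hypothesis in the modular law so that it is the join with $A$, rather than the meet, that gets "absorbed".
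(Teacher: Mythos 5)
Your proof is correct and is essentially the same argument as the paper's. The paper compresses the two checks into one line—$B=(A\dot\vee A^c)\wedge B=A\dot\vee(A^c\wedge B)$ by modularity—whereas you verify the disjointness $(A^c\wedge B)\wedge A=\zero$ and the join $(A^c\wedge B)\vee A=B$ separately, but the key modular-law application (with $B\geq A$ absorbing the join with $A$) is identical.
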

\begin{proof}
    The statement follows by applying the assumption $A\leq B$ and the modular law to evaluate $(A\wedge B)\vee (A\wedge B^c)$.
\end{proof}

Lemma~\ref{lem:compl_partition} and Corollary~\ref{cor:decomp_both} clearly hold if $\mathcal{L}$ is either a subspace lattice or a Boolean lattice. We thus deduce them from Theorem~\ref{thm:compmod_is_direct_prod}.

\begin{lemma}\label{lem:compl_partition}
    Let $\mL$ be a complemented modular lattice, and 
    let $A,B\in\mL$. Then $\cC(B;A)$ is non-empty.
\end{lemma}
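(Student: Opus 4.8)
The plan is to build an element of $\cC(B;A)$ by first selecting a relative complement inside the interval $[\zero,A]$ and then extending it to a complement of $B$ in all of $\mL$ via Corollary~\ref{cor:extend_to_complement}.

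First I would set $M:=A\wedge B$. Since $\mL$ is modular and complemented it is relatively complemented, so $M$ has a relative complement $N$ in $[\zero,A]$, meaning $M\wedge N=\zero$ and $M\vee N=A$. Because $N\leq A$ we get $N\wedge B=N\wedge(A\wedge B)=N\wedge M=\zero$, so the pair $B,N$ meets trivially. Applying Corollary~\ref{cor:extend_to_complement} to $B$ and $N$ then produces $B^c\in\cC(B)$ with $N\leq B^c$.

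It then remains to check that this $B^c$ decomposes $A$. From $N\leq B^c$ and $N\leq A$ we have $N\leq B^c\wedge A$, hence $(B^c\wedge A)\vee(B\wedge A)\geq N\vee M=A$; the reverse inequality is trivial, so $(B^c\wedge A)\vee(B\wedge A)=A$. Also $(B^c\wedge A)\wedge(B\wedge A)\leq B^c\wedge B=\zero$, so this join is direct and $(B^c\wedge A)\dot\vee(B\wedge A)=A$. By Definition~\ref{def:decomposes} this is precisely the statement that $B^c\in\cC(B;A)$, so $\cC(B;A)$ is non-empty.

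I do not anticipate a genuine obstacle: the only insight required is that a relative complement of $A\wedge B$ inside $[\zero,A]$ automatically avoids $B$, so Corollary~\ref{cor:extend_to_complement} can globalize it, after which the decomposition property follows from elementary lattice inequalities (with no further appeal to modularity). The one point to keep straight is that $\cC(B;A)$ refers to complements of $B$ that decompose $A$, and not the reverse, so the roles of $A$ and $B$ in the construction must not be swapped.
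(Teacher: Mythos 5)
Your proof is correct and follows essentially the same route as the paper: take a relative complement of $A\wedge B$ inside $[\zero,A]$, observe it meets $B$ trivially, extend it to a complement of $B$ via Corollary~\ref{cor:extend_to_complement}, and then verify the decomposition property by a sandwich argument. Your write-up is slightly more explicit than the paper's (in particular in justifying $N\wedge B=\zero$ via $N\leq A$), but there is no substantive difference.
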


\begin{corollary}\label{cor:decomp_both}
    Let $\mL$ be a complemented modular lattice, and let $A,B,C\in\mathcal{L}$ such that $A\leq B$. Then $\mathbf{C}(C;A)\cap \mathbf{C}(C;B)$ is non-empty.
\end{corollary}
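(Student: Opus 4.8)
The plan is to construct a single complement $C^c$ of $C$ in $\mL$ that decomposes both $A$ and $B$ at once, by enlarging a suitable element of $[\zero,A]$ up the chain $\zero\leq A\leq B\leq\one$ in stages. The guiding observation is elementary: if $D\in\textbf{C}(C)$ and there is some $D'\leq D$ with $D'\leq X$ and $D'\dot\vee(X\wedge C)=X$, then $D$ decomposes $X$, since $D'\leq D\wedge X$ forces $(D\wedge X)\vee(C\wedge X)\geq D'\vee(X\wedge C)=X$ while $(D\wedge X)\wedge(C\wedge X)\leq D\wedge C=\zero$. So it is enough to produce $C^c\in\textbf{C}(C)$ lying above both a relative complement of $A\wedge C$ in $[\zero,A]$ and a relative complement of $B\wedge C$ in $[\zero,B]$.

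First I would pick $D_1\in[\zero,A]$ with $A=(A\wedge C)\dot\vee D_1$, which exists because a modular complemented lattice is relatively complemented. As $D_1\leq A$, we have $D_1\wedge C=D_1\wedge(A\wedge C)=\zero$, and hence also $D_1\wedge(B\wedge C)=\zero$. Applying Corollary~\ref{cor:extend_to_complement} inside the interval sublattice $[\zero,B]$ --- which is again modular and complemented --- to the element $B\wedge C$ together with $D_1$, I obtain $D_2$ with $D_1\leq D_2\leq B$ and $B=(B\wedge C)\dot\vee D_2$; since $D_2\leq B$ this forces $D_2\wedge C=\zero$ as well. A final application of Corollary~\ref{cor:extend_to_complement}, this time in $\mL$ itself, extends $D_2$ to some $C^c\in\textbf{C}(C)$ with $D_2\leq C^c$. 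Now the observation applies twice: with $X=B$ and $D'=D_2$ it gives $(C^c\wedge B)\dot\vee(C\wedge B)=B$, and with $X=A$ and $D'=D_1\leq D_2\leq C^c$ it gives $(C^c\wedge A)\dot\vee(C\wedge A)=A$. Hence $C^c\in\textbf{C}(C;A)\cap\textbf{C}(C;B)$, as required.

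The one point needing care --- and the main, if mild, obstacle --- is the order in which the two extensions are performed: one must first choose the relative complement $D_1$ of $A\wedge C$ inside $[\zero,A]$ and only afterwards enlarge it to a relative complement $D_2$ of $B\wedge C$ inside $[\zero,B]$, so that the single element $D_2\leq C^c$ simultaneously witnesses both decompositions. Extending in the reverse order, or choosing the two relative complements independently, would not in general yield a common complement of $C$. Apart from this, the proof uses only facts already recorded in the excerpt, notably that intervals of a modular complemented lattice inherit both properties, so that Corollary~\ref{cor:extend_to_complement} can legitimately be invoked inside $[\zero,B]$.
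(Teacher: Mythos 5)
Your proof is correct and is essentially the same as the paper's: both first choose a relative complement $D_1$ of $C\wedge A$ in $[\zero,A]$, extend it via Corollary~\ref{cor:extend_to_complement} to a relative complement $D_2$ of $C\wedge B$ in $[\zero,B]$, and then extend once more to a complement $C^c$ of $C$ in $\mL$; the decomposition of both $A$ and $B$ then follows by the containment argument you spell out. Your version just makes explicit the verification that the paper leaves implicit by reference to Lemma~\ref{lem:compl_partition}.
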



\section{The Cover-Weighted Lattice of an \texorpdfstring{$\mathcal{L}$}{L}-Polymatroid}\label{sec:covlatt}

Throughout this section, we let $(\mathcal{L},r)$ denote an arbitrary fixed $\mL$-polymatroid.

The notion of a weighted lattice is well established; see \cite{whittle}, for instance. We consider two similar objects, which we call an interval-weighted lattice and a cover-weighted lattice.
It was shown in \cite{BCJ17} that the rank function of a $q$-matroid is cryptomorphic to a bicoloring of the support lattice satisfying a set of axioms on its intervals of length two. This characterization of $q$-matroids is intuitive and useful, as demonstrated in \cite{BCJ17}, \cite{BF22}, and \cite{CJ22}. In this section, we 
provide a characterization of $\mL$-polymatroids in terms of interval-weight axioms and show that, in the case where $\mL$ is modular, there exist cover-weight axioms on the intervals of length two, yielding a cryptomorphism with the rank function of an $\mL$-polymatroid.

Recall that we denote the collection of intervals of $\mL$ by $\I(\mL)$. 

\begin{definition}
    An \emph{interval-weighted lattice} $(\mathcal{L},W)$ is a lattice $\mathcal{L}$ equipped with a function $W:\mathcal{I}(\mathcal{L})\rightarrow\mathbb{R}_{\geq0}$. We call $W$ an \emph{interval weighting} of $\mathcal{L}$.
\end{definition}

We now introduce axioms for an interval-weighted lattice, which we will show in 
Theorem~\ref{thm:q-poly_lattice_crypt} to yield a cryptomorphism for $\mL$-polymatroids, as defined in Definition \ref{def:q-polymatroid}.

\begin{definition}
    Let $(\mathcal{L},W)$ be an interval-weighted lattice. We define the \emph{interval-weight} axioms as follows.
    \begin{description}
        \item[\namedlabel{w1}{\rm (IW1)}] For all $[A,B]\in\mathcal{I}(\mathcal{L})$ and every chain $A=X_0<\dots< X_m=B$ ($m\in\mathbb{N}_0$), we have 
        $$\sum_{i=1}^mW([X_{i-1},X_i])=W([A,B]).$$
        \item[\namedlabel{w2}{\rm (IW2)}] For all $A, X \in\mathcal{L}$, we have $W([A\wedge X,A])\geq W([X,A\vee X])$.
    \end{description}
\end{definition}

As we will see in the next theorem, if $(\mL,W)$ satisfies the interval-weight axioms, then if $r_W(A)=W([\zero,A])$ for all $A\in\mathcal{L}$, then $(\mathcal{L},r_W)$ is an $\mathcal{L}$-polymatroid. The proof is a straightforward verification, so we omit it.

\begin{theorem}\label{thm:q-poly_lattice_crypt}
Let $(\mL,W)$ be an interval-weighted lattice, and let 
$r:\mathcal{L}\rightarrow\mathbb{R}_{\geq 0}$.
Define the following functions $W_r:\mathcal{I}(\mL) \longrightarrow \mathbb {R}$ and $r_W:\mathcal{L} \longrightarrow \mathbb{R}$, respectively by
     \[
     W_r([A,B])=r(B)-r(A) \text{ for all }[A,B] \in \I(\mL) \text{ and } 
     r_W(A)=W([\zero,A]) \text{ for all } A \in \mL.
     \]
If $(\mathcal{L}, r)$ is an $\mathcal{L}$-polymatroid, then $(\mathcal{L}, W_r)$ satisfies
the interval-weight axioms and $r_{W_r} = r$. Likewise, If $(\mathcal{L}, W )$ satisfies the interval-weight
axioms, then $(\mathcal{L}, r_W )$ is an $\mathcal{L}$-polymatroid and $W_{r_W} = W$.
\end{theorem}

If $\mL$ is a modular lattice, we may replace axioms \ref{w1} and \ref{w2} with axioms that refer only to intervals of length at most two in $\mL$. First, we introduce the notion of a cover weighting.

\begin{definition}
    A \emph{cover-weighted} lattice $(\mathcal{L},w)$ is a pair for which $\mathcal{L}$ lattice and $w$ is a non-negative real-valued function $w$ defined on intervals of length one in $\mathcal{L}$. We call $w$ a \emph{cover weighting} of $\mathcal{L}$.
\end{definition}

We now specialize to the case for which $\mL$ is a finite modular lattice.

\begin{definition}
Let $\mL$ be modular, equipped with a cover weighting $w$. We define the following {\em cover-weight axioms} for all intervals $[A,B]\in\mathcal{L}$ of length two and all elements $X,Y\in(A,B)$.
    \begin{description}
        \item[\namedlabel{cw1}{{\rm (CW1)}}] We have $w([A,X])+w([X,B])=w([A,Y])+w([Y,B])$.
        \item[\namedlabel{cw2}{{\rm (CW2)}}] If $X\neq Y$, then $w([A,X])\geq w([Y,B])$.
    \end{description}
\end{definition}

We now establish a cryptomorphism between the cover weight axioms  \ref{cw1} and \ref{cw2} and the interval weight axioms \ref{w1} and \ref{w2}.

\begin{theorem}\label{th:covwt}
    Let $\mL$ be modular, equipped with a cover-weighting $w$ that satisfies \ref{cw1} and \ref{cw2}. 
    The following hold.
    \begin{enumerate}
        \item 
        For any $[A,B] \in \I(\mL)$ and any maximal chain $A=X_0\lessdot\cdots\lessdot X_m=B$, the value 
        $\sum_{i=1}^mw([X_{i-1},X_i])$ is determined by $A$ and $B$. In particular,
        we have a well-defined function $W_w: \I(\mL) \longrightarrow \mathbb{R}_{\geq 0}$ determined by
        \[ 
           W_w([A,B])=\sum_{i=1}^mw([X_{i-1},X_i]) \text{ for all } [A,B] \in \I(\mL).
        \]
        \item The function $W_w$ is an interval weighting that satisfies the interval-weight axioms.
    \end{enumerate}
    Thus, $(\mL,r_{W_w})$ is an $\mL$-polymatroid, where $r_{W_w}(A)=W_w([\zero,A])$ for all $A \in \mL$.
\end{theorem}
    
\begin{proof}
    Let $[A,B]\in\mathcal{I}(\mathcal{L})$. If $\textup{len}([A,B])\leq 2$, then by \ref{cw1}, $\sum_{i=1}^2w([X_{i-1},X_i])$ is determined by $[A,B]$ and therefore $W_w([A,B])=\sum_{i=1}^2w([X_{i-1},X_i])$ satisfies \ref{w1} on $[A,B]$. For $X,Y\in\mathcal{L}$, if $\textup{len}([X\wedge Y,X\vee Y])\leq 2$, then by \ref{cw2}, $W_w$ satisfies \ref{w2} for $X$ and $Y$. We now proceed by induction on $\textup{len}([A,B])$ and $\textup{len}([X\wedge Y,X\vee Y])$.

    Let $[A,B]\in\mathcal{I}(\mathcal{L})$ have length $m$. Since $\mathcal{L}$ is modular, any two maximal chains $A=X_0\lessdot X_1\lessdot\dots\lessdot X_m=B$ and $A=Y_0\lessdot Y_1\lessdot\dots\lessdot Y_m=B$ have equal length. If $X_1=Y_1$, then there is nothing to show due to the induction hypothesis. Therefore, let $X_1,Y_1\lessdot Z=X_1\vee Y_1$. By the induction hypothesis, $W_w$ is both well-defined and satisfies the identity from \ref{w1} on $[A,Z]$, $[X_1,B]$, and $[Y_1,B]$. We therefore obtain
    $$W_w([A,X_1])+W_w([X_1,B])=W_w([A,Z])+W_w([Z,B])=W_w([A,Y_1])+W_w([Y_1,B]),$$
    from which it follows that $W_w$ is both well-defined and satisfies the identity from \ref{w1} on $[A,B]$, and thus, on any interval in $\mathcal{L}$.

    Let $X,Y\in\mathcal{L}$ such that $\textup{len}([X\wedge Y,X\vee Y])=m$. Let $V\in[X\wedge Y,X]$ cover $X\wedge Y$. By the modularity of $\mathcal{L}$, we deduce that $\len([X\wedge Y, V\vee Y]),\len([V,X\vee Y])<m$. By the induction hypothesis, we obtain $W_w([X\wedge Y,Y])\geq W_w([V,V\vee Y])\geq W_w([X,X\vee Y])$.
\end{proof}

As a direct consequence of Theorems \ref{thm:q-poly_lattice_crypt} and \ref{th:covwt}, we now have the following corollary, which gives a characterisation of an $\mL$-polymatroid in terms of the rank function on intervals of length 2 in $\mL$.

\begin{corollary}\label{cor:int2}
    Let $\mL$ be modular. Let $r:\mL \to \mathbb{R}_{\geq 0}$ be such that \ref{r1} and \ref{r2} hold, and \ref{r3} holds on every interval of length 2 in $\mL$. Then $(\mL,r)$ is an $\mL$-polymatroid.
\end{corollary}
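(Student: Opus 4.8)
The plan is to deduce Corollary~\ref{cor:int2} directly from Theorem~\ref{th:covwt} by passing through the cover-weighting associated to $r$. First I would observe that the hypothesis — that \ref{r1}--\ref{r3} hold for every interval of length $2$ in $\mL$ — is exactly what is needed to define a cover-weighting and verify the cover-weight axioms \ref{cw1} and \ref{cw2} on intervals of length $2$. Concretely, set $w_r([A,B]) := r(B) - r(A)$ for every cover $[A,B] \subseteq \mL$; the restricted version of \ref{r2} (monotonicity on length-$2$ intervals, hence in particular on covers) guarantees $w_r([A,B]) \geq 0$, so $(\mL, w_r)$ is a genuine cover-weighted lattice.

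Next I would check \ref{cw1} and \ref{cw2} for an arbitrary interval $[A,B]$ of length $2$ with $X, Y \in (A,B)$. For \ref{cw1}, both sides equal $r(B) - r(A)$ after telescoping, so the identity is immediate with no hypothesis beyond the definition of $w_r$. For \ref{cw2}, when $X \neq Y$ we have $X \wedge Y = A$ and $X \vee Y = B$ (both $X$ and $Y$ are strictly between $A$ and $B$ in a length-$2$ interval, so they are incomparable and their meet and join are forced), and then the submodularity axiom \ref{r3} applied to the pair $X, Y$ gives $r(X) - r(A) = r(X) - r(X \wedge Y) \geq r(X \vee Y) - r(Y) = r(B) - r(Y)$, i.e. $w_r([A,X]) \geq w_r([Y,B])$. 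This is precisely \ref{cw2}. Note that \ref{r3} is being invoked only on the length-$2$ interval $[A,B]$, which is within the stated hypothesis.

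Having established that $(\mL, w_r)$ satisfies \ref{cw1} and \ref{cw2} on every length-$2$ interval of $\mL$, Theorem~\ref{th:covwt} applies verbatim and yields that $(\mL, r_{w_r})$ is an $\mL$-polymatroid. It then remains only to identify $r_{w_r}$ with $r$. For this I would run the telescoping argument along a maximal chain $\zero = X_0 \lessdot \cdots \lessdot X_m = A$: by definition $r_{w_r}(A) = \sum_{i=1}^{m} w_r([X_{i-1},X_i]) = \sum_{i=1}^m \big(r(X_i) - r(X_{i-1})\big) = r(A) - r(\zero)$, and \ref{r1} applied to $\zero$ (with $\h(\zero) = 0$) forces $r(\zero) = 0$, so $r_{w_r} = r$. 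Hence $(\mL, r)$ itself is an $\mL$-polymatroid.

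The only mild subtlety — and the step I would be most careful about — is making sure that every use of \ref{r1}--\ref{r3} in the chain of reasoning is legitimately confined to intervals of length at most $2$. The verification of \ref{cw1}, \ref{cw2} clearly only touches a single length-$2$ interval at a time, and the computation of $r(\zero) = 0$ uses \ref{r1} on the length-$0$ interval $[\zero,\zero]$; after that point all the heavy lifting (well-definedness of $r_{w_r}$ via \ref{w1}, and the global submodularity \ref{r3} of $r_{w_r}$ via \ref{w2}) is supplied by Theorem~\ref{th:covwt}, which already only assumes the length-$2$ cover-weight axioms. So there is no hidden appeal to \ref{r3} on a long interval, and the corollary follows cleanly. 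One should also remark that \ref{r2} restricted to length-$2$ intervals does imply monotonicity on covers, which is all that is needed for $w_r$ to be non-negative; I would state this explicitly to avoid any gap.
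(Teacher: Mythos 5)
Your proof is correct and is essentially the argument the paper intends: the paper provides no explicit proof of Corollary~\ref{cor:int2}, presenting it as an immediate consequence of Theorem~\ref{th:covwt}, and your route---define $w_r$ by covers, verify \ref{cw1}--\ref{cw2} from the local \ref{r2}--\ref{r3}, invoke Theorem~\ref{th:covwt}, and then identify $r_{w_r}=r$ via telescoping and $r(\zero)=0$---is precisely that intended derivation, just written out in full.

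One small observation worth retaining: your verification that $w_r\geq 0$ on covers relies on every cover $[A,B]$ being contained in some length-$2$ interval, which holds provided $\h(\one)\geq 2$ (extend the cover up or down one step). In the degenerate case $\h(\one)\leq 1$ the hypothesis on length-$2$ intervals is vacuous and the conclusion must be read as trivial or handled separately; this is implicit in the paper as well and not a defect of your argument.
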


We will use the characterization given by Corollary \ref{cor:int2} in the proof of Theorem \ref{thm:axioms_imply_q-poly}.

\begin{figure}
    \centering
    \scalebox{1.5}{
    \begin{tikzpicture}
        \node[] (0) at (0,0) {\tiny $0$};
        \node[] (1) at (0,3) {\tiny $\mathbb{F}_2^3$};
        \node[] (a) at (-3,1) {\tiny $100$};
        \node[] (b) at (-2,1) {\tiny $010$};
        \node[] (c) at (-1,1) {\tiny $110$};
        \node[] (d) at (0,1) {\tiny $111$};
        \node[] (e) at (1,1) {\tiny $011$};
        \node[] (f) at (2,1) {\tiny $001$};
        \node[] (g) at (3,1) {\tiny $101$};
        \node[] (h) at (-3,2) {\tiny $\begin{smallmatrix}100\\010\end{smallmatrix}$};
        \node[] (i) at (-2,2) {\tiny $\begin{smallmatrix}100\\011\end{smallmatrix}$};
        \node[] (j) at (-1,2) {\tiny $\begin{smallmatrix}100\\001\end{smallmatrix}$};
        \node[] (k) at (0,2) {\tiny $\begin{smallmatrix}010\\001\end{smallmatrix}$};
        \node[] (l) at (1,2) {\tiny $\begin{smallmatrix}101\\010\end{smallmatrix}$};
        \node[] (m) at (2,2) {\tiny $\begin{smallmatrix}101\\011\end{smallmatrix}$};
        \node[] (n) at (3,2) {\tiny $\begin{smallmatrix}110\\001\end{smallmatrix}$};
        \path [-,red] (0) edge (a);
        \path [-,red] (0) edge (b);
        \path [-,black] (0) edge (c);
        \path [-,red] (0) edge (d);
        \path [-,black] (0) edge (e);
        \path [-,black] (0) edge (f);
        \path [-,black] (0) edge (g);
        \path [-,red] (1) edge (h);
        \path [-,red] (1) edge (i);
        \path [-,green] (1) edge (j);
        \path [-,green] (1) edge (k);
        \path [-,red] (1) edge (l);
        \path [-,green] (1) edge (m);
        \path [-,green] (1) edge (n);
        \path [-,red] (a) edge (h);
        \path [-,red] (a) edge (i);
        \path [-,black] (a) edge (j);
        \path [-,red] (b) edge (h);
        \path [-,black] (b) edge (k);
        \path [-,red] (b) edge (l);
        \path [-,green] (c) edge (h);
        \path [-,red] (c) edge (m);
        \path [-,red] (c) edge (n);
        \path [-,red] (d) edge (i);
        \path [-,red] (d) edge (l);
        \path [-,black] (d) edge (n);
        \path [-,green] (e) edge (i);
        \path [-,red] (e) edge (k);
        \path [-,red] (e) edge (m);
        \path [-,red] (f) edge (j);
        \path [-,red] (f) edge (k);
        \path [-,red] (f) edge (n);
        \path [-,red] (g) edge (j);
        \path [-,green] (g) edge (l);
        \path [-,red] (g) edge (m);
    \end{tikzpicture}}
    \caption{A $q$-polymatroid on $\mathbb{F}_2^3$ with cover-weights (\textcolor{green}{0},\textcolor{red}{1},2).}
    \label{fig:running_eg}
\end{figure}

\begin{example}
    Consider the cover-weighted lattice shown in Figure~\ref{fig:running_eg}. By inspection, we can conclude that the cover-weight axioms are satisfied. Therefore, by Corollary~\ref{cor:int2}, we conclude that this cover-weighted lattice corresponds to an $\mathcal{L}$-polymatroid. Since in this case the lattice is the subspace lattice of $\mathbb{F}_2^3$, it is a $q$-polymatroid.
\end{example}


\section{Cyclic Flats of an \texorpdfstring{$\mathcal{L}$}{L}-polymatroid}\label{sec:cyclicflat}
For the remainder of this paper, unless explicitly stated otherwise, $\mL$ will be assumed to be a (finite) complemented modular lattice, and $\mP=(\mathcal{L},r)$ will denote an $\mL$-polymatroid. By Theorem~\ref{thm:compmod_is_direct_prod}, we may write $\mathcal{L}$ as a direct product of subspace lattices and a Boolean lattice. 
\begin{notation}
    Let $\ell$ be a non-negative integer. We write $\mathcal{L}=\mathcal{L}_\mathbf{B}\times\mathcal{L}_\mathbf{V}$, where $\mathcal{L}_\mathbf{B}$ is a Boolean lattice and
    $\mathcal{L}_\mathbf{V}=\mathcal{L}_{V_1}\times\mathcal{L}_{V_2}\times\cdots\times\mathcal{L}_{V_\ell},$
  where $\mathcal{L}_{V_i}$ is the lattice of subspaces of a (finite) vector space $V_i$ for each $i\in[\ell]$.
\end{notation}

In this section, we introduce the notion of a cyclic flat of an $\mL$-polymatroid, which extends the analogous definitions for matroids, polymatroids, and $q$-matroids; see \cite{AlfByr22,bonin_demier,Csirm20}. Furthermore, we show that the cyclic flats of an $\mL$-polymatroid (under this definition), along with their ranks and the ranks of the atoms of the $\mL$-polymatroid, determine the rank function of the $\mL$-polymatroid.

First, we define a flat of an $\mL$-polymatroid. The definition extends naturally from the analogous concept in matroid theory.

\begin{definition}
    Let $X\in\mathcal{L}$. We say that $X$ is a \emph{flat} of $\mP$ if $r(X)<r(X\vee x)$ for every atom $x\not\leq X$.
\end{definition}

Now we introduce the notion of a cyclic element. 

\begin{definition}\label{def:cyclic_space}
We call $X\in\mathcal{L}$ \emph{cyclic} in $\mP$ if for all $H\in\mH(X)$ one of the following conditions is satisfied:
\begin{enumerate}
    \item $r(X)=r(H)$, or
    \item $0<r(X)-r(H)$ and there exists $a\in\mathcal{A}(X)\setminus\mathcal{A}(H)$ such that $r(X)-r(H)<r(a)$.
\end{enumerate}
We also refer to a cyclic element of $\mP$ as a \emph{cycle} of $\mP$.
\end{definition}

If $X\in\mathcal{L}$ is both cyclic and a flat, we call it a {\em cyclic flat}.

Definition~\ref{def:cyclic_space} generalizes the definitions of cyclic elements for polymatroids and $q$-matroids given in \cite{Csirm20} and \cite{AlfByr22}, respectively.
We remark that the second condition stated in Definition~\ref{def:cyclic_space} has the equivalent statement that if 
$0<r(X)-r(H)$ then there exists a relative complement $a$ of $H$ in $[\zero, X]$ such that $r(X)-r(H)<r(a)$.

\begin{example}
    Consider the $q$-polymatroid shown in Figure~\ref{fig:running_eg}. The set of cyclic flats of this $q$-polymatroid is
    $$\mathcal{Z}=\{0,\langle e_1, e_2\rangle, \langle e_1, e_2+e_3\rangle,\langle e_1+e_3, e_2\rangle, \mathbb{F}_2^3\}.$$
    Note that $\mathcal{Z}$ is a lattice.
\end{example}

\begin{remark}\label{rem:alt_cyc}
    The definition of a cycle given in Definition~\ref{def:cyclic_space} is not the only conceivable generalization of a cycle as defined in \cite{AlfByr22,Csirm20}. An alternative generalization of a cycle $X$ in $\mP$ is obtained by replacing the existential quantifier in Property 2 of Definition~\ref{def:cyclic_space} with the universal quantifier. We do not pursue this definition, as then the resulting set of cyclic flats is not necessarily a lattice, as we show in the next example. By contrast, as we show in Proposition~\ref{prop:cyc_flat_lattice}, with our chosen definition the set of cyclic flats does form a lattice.
\end{remark}

\begin{example}
    Consider the $q$-polymatroid shown in Figure~\ref{fig:alt_cyc_def_eg}. 
    \begin{figure}[!ht]
        \centering
        \scalebox{1.5}{
        \begin{tikzpicture}
            \node[] (0) at (0,0) {\tiny $0$};
            \node[] (1) at (0,3) {\tiny $\mathbb{F}_2^3$};
            \node[] (a) at (-3,1) {\tiny $100$};
            \node[] (b) at (-2,1) {\tiny $010$};
            \node[] (c) at (-1,1) {\tiny $110$};
            \node[] (d) at (0,1) {\tiny $111$};
            \node[] (e) at (1,1) {\tiny $011$};
            \node[] (f) at (2,1) {\tiny $001$};
            \node[] (g) at (3,1) {\tiny $101$};
            \node[] (h) at (-3,2) {\tiny $\begin{smallmatrix}100\\010\end{smallmatrix}$};
            \node[] (i) at (-2,2) {\tiny $\begin{smallmatrix}100\\011\end{smallmatrix}$};
            \node[] (j) at (-1,2) {\tiny $\begin{smallmatrix}100\\001\end{smallmatrix}$};
            \node[] (k) at (0,2) {\tiny $\begin{smallmatrix}010\\001\end{smallmatrix}$};
            \node[] (l) at (1,2) {\tiny $\begin{smallmatrix}101\\010\end{smallmatrix}$};
            \node[] (m) at (2,2) {\tiny $\begin{smallmatrix}101\\011\end{smallmatrix}$};
            \node[] (n) at (3,2) {\tiny $\begin{smallmatrix}110\\001\end{smallmatrix}$};
            \path [-,black] (0) edge (a);
            \path [-,black] (0) edge (b);
            \path [-,black] (0) edge (c);
            \path [-,black] (0) edge (d);
            \path [-,black] (0) edge (e);
            \path [-,black] (0) edge (f);
            \path [-,red] (0) edge (g);
            \path [-,green] (1) edge (h);
            \path [-,green] (1) edge (i);
            \path [-,green] (1) edge (j);
            \path [-,red] (1) edge (k);
            \path [-,green] (1) edge (l);
            \path [-,green] (1) edge (m);
            \path [-,green] (1) edge (n);
            \path [-,red] (a) edge (h);
            \path [-,red] (a) edge (i);
            \path [-,red] (a) edge (j);
            \path [-,red] (b) edge (h);
            \path [-,green] (b) edge (k);
            \path [-,red] (b) edge (l);
            \path [-,red] (c) edge (h);
            \path [-,red] (c) edge (m);
            \path [-,red] (c) edge (n);
            \path [-,red] (d) edge (i);
            \path [-,red] (d) edge (l);
            \path [-,red] (d) edge (n);
            \path [-,red] (e) edge (i);
            \path [-,green] (e) edge (k);
            \path [-,red] (e) edge (m);
            \path [-,red] (f) edge (j);
            \path [-,green] (f) edge (k);
            \path [-,red] (f) edge (n);
            \path [-,black] (g) edge (j);
            \path [-,black] (g) edge (l);
            \path [-,black] (g) edge (m);
        \end{tikzpicture}}
        \caption{A $q$-polymatroid on $\mathbb{F}_2^3$ with cover-weights (\textcolor{green}{0},\textcolor{red}{1},2).}
        \label{fig:alt_cyc_def_eg}
    \end{figure}
    If we apply the alternative definition of a cycle by replacing the existential quantifier in Property 2 of Definition~\ref{def:cyclic_space} with the universal quantifier, the set of cycles in this $q$-polymatroid becomes
    \[
        \mathcal{C} = \{0, \langle e_1, e_2 \rangle, \langle e_1, e_2+e_3 \rangle, 
        \langle e_2, e_3 \rangle, \langle e_1+e_2, e_3 \rangle \},
    \]  
    which is clearly not a lattice. If we increase the weight of every cover in Figure~\ref{fig:alt_cyc_def_eg} by 1, we obtain another $q$-polymatroid with the same set of cycles. However, in this case, each cycle also becomes a cyclic flat, meaning that $\mathcal{Z} = \mathcal{C}$, which is not a lattice.
\end{example}

For $q$-matroids, the lattice of cyclic flats, together with their ranks, determine the rank function of the $q$-matroid \cite{AlfByr22}. For polymatroids, the lattice of cyclic flats, along with their ranks and the ranks of the atoms of the ambient lattice (i.e., the singleton sets), determine the rank function of the polymatroid \cite{Csirm20}. We will show that similarly, an $\mL$-polymatroid is uniquely determined by its cyclic flats, their ranks, and the ranks of the atoms of $\mL$.

\begin{lemma}\label{lem:hyper_cycle}
    Let $C\in\mathcal{L}$ be cyclic in $\mP$. Fix $X\in[C,\one]$ and $H \in\mH(X)$. If $C\nleq H$, then $r(X)-r(H)<r(a)$ for some $a\in\mathcal{A}(X)\backslash\mathcal{A}(H)$.
\end{lemma}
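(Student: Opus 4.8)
The plan is to reduce the statement for a general $X \geq C$ to the defining property of a cyclic element applied to $C$ itself, by transporting the hyperplane $H$ of $X$ down to an appropriate coatom of $C$ via the Isomorphism Theorem for Modular Lattices (Theorem~\ref{thm:isom_thm_lattices}) and Proposition~\ref{prop:projection}. First I would consider the element $H \wedge C$. Since $C \not\leq H$ and $H \lessdot X$ with $C \leq X$, we have $H \vee C = X$ (because $H \vee C$ lies strictly above $H$ in $[H,X]$, and $H$ is a coatom of $X$), and therefore by modularity $[H \wedge C, C] \myup [H, X]$, so these intervals are isomorphic; in particular $\len([H\wedge C, C]) = \len([H,X]) = 1$, i.e. $H \wedge C \in \mH(C)$. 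This is the key structural observation: every "bad" hyperplane $H$ of $X$ (one not containing $C$) pulls back to a genuine coatom of the cycle $C$.

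Next I would invoke that $C$ is cyclic: applied to the coatom $H \wedge C \in \mH(C)$, Definition~\ref{def:cyclic_space} gives either $r(C) = r(H \wedge C)$, or else $0 < r(C) - r(H \wedge C)$ together with an atom $a \in \mA(C) \setminus \mA(H \wedge C)$ with $r(C) - r(H\wedge C) < r(a)$. The first alternative can be ruled out: by Proposition~\ref{prop:projection}(2) applied to $[H \wedge C, C] \myup [H, X]$ we get $r(C) - r(H \wedge C) \geq r(X) - r(H)$, and I would want $r(X) - r(H) > 0$. That strict inequality should itself follow from the cyclicity of $C$: if $r(X) = r(H)$ then submodularity via $[H\wedge C,C]\mydn$-type comparisons or the flat/rank monotonicity would force $r(C) = r(H\wedge C)$, contradicting... — actually, the cleaner route is: since $C \not\leq H$ but $C \leq X = H \vee C$, pick any atom $a \leq C$ with $a \not\leq H$; such an atom lies in $\mA(X)\setminus\mA(H)$, and if $r(X)=r(H)$ then $r(X)-r(H)=0 < r(a)$ only gives the weak conclusion, so I need to be more careful and instead argue directly that the second alternative of Definition~\ref{def:cyclic_space} must hold for $H \wedge C$. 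In the second alternative, the atom $a \in \mA(C) \setminus \mA(H\wedge C)$ satisfies $a \leq C \leq X$ and $a \not\leq H$ (if $a \leq H$ then $a \leq H \wedge C$, contradiction), so $a \in \mA(X) \setminus \mA(H)$, and combining $r(X) - r(H) \leq r(C) - r(H \wedge C) < r(a)$ yields exactly the claim.

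The main obstacle I anticipate is the case analysis in Definition~\ref{def:cyclic_space} for $H \wedge C$: I must show the first alternative $r(C) = r(H\wedge C)$ is impossible (or else that it still yields the conclusion), since Lemma~\ref{lem:hyper_cycle} asserts the strict-inequality conclusion unconditionally whenever $C \not\leq H$. The resolution is that $r(C) = r(H \wedge C)$ combined with $[H\wedge C,C]\myup[H,X]$ and Proposition~\ref{prop:projection}(1) (using $[H,X]\mydn[H\wedge C, C]$, which holds since $H \wedge (H\wedge C) $-type identities and $H \vee (H\wedge C)$... — more precisely $[H,X]\mydn[H\wedge C,C]$ because $H\wedge C = H\wedge C$ and $H \vee C = X$ need rechecking against Definition~\ref{def:updown}) would give $r(X) = r(H)$; but then for $C$ cyclic we can apply the hypothesis to $H$ inside $C$'s own structure — concretely, one shows that if $r(X)-r(H)=0$ one can replace $H$ by a coatom of $X$ still not containing $C$ but realising the second alternative, using that the rank contributions cannot all vanish because $r$ is the rank function of the cyclic element $C$ which "uses" all its atoms. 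I would isolate this as the delicate point and handle it by a minimality argument: among all coatoms $H'$ of $X$ with $C \not\leq H'$ and $H' \wedge C = H \wedge C$, if none realised the strict inequality then $r(C) = r(H\wedge C)$, contradicting that $C\not\leq H\wedge C$ forces (by cyclicity of $C$, second alternative, since the first would be ruled out by iterating) a strict rank drop. This bookkeeping is routine once the isomorphism $[H\wedge C,C]\cong[H,X]$ is in hand, so the bulk of the proof is the two-line modularity computation at the start.
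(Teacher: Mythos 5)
Your core argument matches the paper's exactly: establish $X=H\vee C$ from $C\nleq H$ and $H\lessdot X$ (so $C\wedge H\in\mH(C)$ and $[H\wedge C,C]\myup[H,X]$ by modularity), apply Proposition~\ref{prop:projection} to get $r(X)-r(H)\leq r(C)-r(C\wedge H)$, then invoke cyclicity of $C$ at the coatom $C\wedge H$ to obtain an atom $a\in\mA(C)\setminus\mA(C\wedge H)\subseteq\mA(X)\setminus\mA(H)$ bounding that difference. That part is fine and is essentially the paper's proof, which is two lines.

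The gap is in your attempt to rule out the first alternative of Definition~\ref{def:cyclic_space}, namely $r(C)=r(C\wedge H)$. Your minimality argument is circular: you assert the first alternative is "ruled out by iterating," but cyclicity is exactly the disjunction you are trying to break, and nothing forces the second branch. And the first branch can genuinely occur: in the zero $\mL$-polymatroid ($r\equiv 0$) every element is cyclic and every rank difference vanishes, so with $C=X=\one$ and any $H\in\mH(\one)$ there is no atom with strictly positive rank. You are correct to be suspicious of this case — it is a real loose end — but the way to resolve it is not to deny its existence. The honest reading is that when $r(C)=r(C\wedge H)$ one has $r(X)-r(H)\leq 0$, hence $r(X)=r(H)$ by monotonicity, and the lemma's strict-inequality conclusion should then be understood either as holding under the additional hypothesis $r(X)>r(H)$, or as the disjunction ``$r(X)=r(H)$ or $r(X)-r(H)<r(a)$ for some $a\in\mA(X)\setminus\mA(H)$.'' The paper's own proof silently skips this case, but every downstream use (Lemmas~\ref{lem:cyc_join}, \ref{lem:cyc(F)_is_also_flat}, \ref{lem:closure_of_cyc_space_is_cyclic}, \ref{lem:cyc_rank_identity}) either assumes $0<r(X)-r(H)$ before invoking it or only needs the disjunctive form, so the flaw is harmless there. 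Drop the attempted exclusion and just split into the two cases cleanly at the start.
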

\begin{proof}
    If $C\nleq H$, then $r(X)-r(H)\leq r(C)-r(C\wedge H)$ by submodularity, while $r(C)-r(C\wedge H)<r(a)$ for some $a\in\mathcal{A}(C)\backslash\mathcal{A}(C\wedge H)$ by the cyclicity of $C$. Since $a\in\mathcal{A}(X)\backslash\mathcal{A}(H)$, the result follows.
\end{proof}

\begin{lemma}\label{lem:cyc_join}
    The join of two cycles of $\mP$ is a cycle of $\mP$.
\end{lemma}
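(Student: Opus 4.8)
The plan is to verify Definition~\ref{def:cyclic_space} directly for the join $C := C_1 \vee C_2$ of two cycles $C_1, C_2$ of $\mP$, reducing the condition on coatoms of $[\zero,C]$ to a single application of Lemma~\ref{lem:hyper_cycle}.

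First I would fix an arbitrary $H\in\mathcal{H}(C)$ and dispose of the trivial case: if $r(H)=r(C)$, then condition~(1) of Definition~\ref{def:cyclic_space} holds at $H$ and there is nothing to check. Otherwise, since $H\leq C$, monotonicity \ref{r2} gives $r(H)\leq r(C)$, so the failure of equality forces $0<r(C)-r(H)$, which is the first requirement of condition~(2).

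Next I would exploit the lattice structure. Since $H\lessdot C$ we have $H<C=C_1\vee C_2$, so $H$ cannot lie above both $C_1$ and $C_2$; for if $C_1\leq H$ and $C_2\leq H$, then $C=C_1\vee C_2\leq H$, a contradiction. Without loss of generality $C_1\nleq H$. Now apply Lemma~\ref{lem:hyper_cycle} with the cyclic element $C_1$, the element $X:=C$ (noting $C_1\leq C$), and the coatom $H\in\mathcal{H}(C)$: since $C_1\nleq H$, the lemma yields $r(C)-r(H)<r(a)$ for some $a\in\mathcal{A}(C)\backslash\mathcal{A}(H)$. Together with $0<r(C)-r(H)$, this is exactly condition~(2) of Definition~\ref{def:cyclic_space} at $H$. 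As $H\in\mathcal{H}(C)$ was arbitrary, $C$ is cyclic.

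I do not anticipate a genuine obstacle here; the argument is short. The only points requiring a little care are the separate treatment of the rank-equality case (so that the strict inequality $0<r(C)-r(H)$ needed to invoke condition~(2) is actually available) and the observation that a coatom of $[\zero,C]$ cannot contain every join component, which is precisely what makes Lemma~\ref{lem:hyper_cycle} applicable. This reasoning extends verbatim to the join of finitely many cycles by induction.
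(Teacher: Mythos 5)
Your argument matches the paper's proof essentially verbatim: fix a coatom $H$ of $[\zero,C]$, note that at least one of $C_1,C_2$ is not below $H$, and apply Lemma~\ref{lem:hyper_cycle} to conclude. You merely spell out the rank-equality dichotomy and the coatom observation more explicitly than the paper does, but the route and the key lemma are identical.
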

\begin{proof}
Let $C_1,C_2\in\mathcal{L}$ be cycles, and define $C=C_1\vee C_2$. Let $H\in\mH(C)$. At least one of $C_1$ and $C_2$ does not belong to $[\zero,H]$. By Lemma~\ref{lem:hyper_cycle}, either $r(H)=r(C)$ or $r(C)-r(H)<r(a)$ for some $a\in\mathcal{A}(C)\backslash\mathcal{A}(H)$. Therefore, $C$ is cyclic.
\end{proof}

\begin{definition}
    The {\em cyclic operator} of $\mP$ is defined for all $X\in\mathcal{L}$, by
    $$\textup{cyc}(X)=\bigvee\{Y\leq X:Y\textup{ is cyclic in } \mP \}.$$
\end{definition}
By Lemma~\ref{lem:cyc_join}, $\cyc(X)$ is the unique maximal cycle contained in $X$.

\begin{lemma}\label{lem:cyc(F)_is_also_flat}
    For any flat $X$ of $\mP$, $\cyc(X)$ is also a flat of $\mP$. 
\end{lemma}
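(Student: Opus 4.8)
The plan is to argue by contradiction. I would write $C:=\cyc(X)$, which is a cycle of $\mP$ by Lemma~\ref{lem:cyc_join} (it is the largest cycle below $X$). Assuming $C$ is not a flat, there is some $y\nleq C$ with $r(C\vee y)=r(C)$; picking an atom $a\leq y$ with $a\nleq C$ (possible since a finite‑height complemented modular lattice is atomistic), monotonicity \ref{r2} forces $r(C\vee a)=r(C)$. The strategy is then to show $C\vee a$ is again a cycle, and to derive a contradiction in each of the two cases $a\leq X$ and $a\nleq X$.

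First I would check that $C\vee a$ is a cycle. Since $a$ is an atom with $a\nleq C$ we have $C\wedge a=\zero$, so Theorem~\ref{thm:isom_thm_lattices} gives $\h(C\vee a)=\h(C)+1$ and hence $C\lessdot C\vee a$. Let $H\in\mH(C\vee a)$. If $C\leq H$, then $H=C$ because $C\lessdot C\vee a$, so $r(C\vee a)-r(H)=0$ and condition~(1) of Definition~\ref{def:cyclic_space} holds at $H$. If $C\nleq H$, then Lemma~\ref{lem:hyper_cycle}, applied to the cycle $C$ sitting inside $C\vee a$, yields some $b\in\mathcal{A}(C\vee a)\setminus\mathcal{A}(H)$ with $r(C\vee a)-r(H)<r(b)$, so one of the two conditions of Definition~\ref{def:cyclic_space} holds at $H$. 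Hence $C\vee a$ is cyclic. (Morally this is the statement that the closure of a cyclic set is cyclic, transcribed to the lattice setting.)

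Next I would derive the contradiction. If $a\leq X$ then $C\vee a\leq X$ is a cycle, so $C\vee a\leq\cyc(X)=C$, contradicting $a\nleq C$. If instead $a\nleq X$, I would apply submodularity \ref{r3} to the pair $C\vee a,\ X$; using $(C\vee a)\vee X=X\vee a$ together with $C\leq(C\vee a)\wedge X$,
\[
r(X\vee a)-r(X)\ \leq\ r(C\vee a)-r\big((C\vee a)\wedge X\big)\ \leq\ r(C\vee a)-r(C)\ =\ 0,
\]
so $r(X\vee a)=r(X)$; but $X$ is a flat and $a\nleq X$, a contradiction. Either way $C$ must be a flat.

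The step I expect to be the main obstacle is the first one: recognising that $\cyc(X)$ can fail to be a flat only by absorbing a rank‑neutral atom, and that adjoining such an atom to a cycle again produces a cycle. The modularity identity $C\lessdot C\vee a$ is what makes the coatom case‑split short (the case $C\le H$ collapses to $H=C$), with Lemma~\ref{lem:hyper_cycle} handling the other case. The flatness hypothesis on $X$ is used only at the very end, exactly to rule out the atom $a$ lying outside $X$.
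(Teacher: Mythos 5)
Your proof is correct, and it takes a genuinely different route from the paper's. The paper argues \emph{top-down} from $X$: assuming $\cyc(X)<X$, the non-cyclicity of $X$ produces a coatom $H\in\mH(X)$ with $r(X)-r(H)=r(a)$ for all $a\in\mA(X)\setminus\mA(H)$; Lemma~\ref{lem:hyper_cycle} forces $\cyc(X)\leq H$, and Proposition~\ref{prop:projection} (the down-projection $[X,X\vee a']\mydn[H,H\vee a']$) shows that $H$ is again a flat; iterating this produces a descending chain of flats terminating at $\cyc(X)$. You instead argue \emph{bottom-up} by contradiction: you adjoin a rank-neutral atom $a\nleq C$ to $C=\cyc(X)$, prove inline that $C\vee a$ is again a cycle (this is in effect a single-step instance of the proof of Lemma~\ref{lem:closure_of_cyc_space_is_cyclic}, sharpened by the fact that $C\lessdot C\vee a$ in a modular lattice, which collapses the $C\leq H$ subcase to $H=C$), and then split on whether $a\leq X$: the first subcase contradicts maximality of $\cyc(X)$, the second uses submodularity \ref{r3} together with $C\leq(C\vee a)\wedge X$ to conclude $r(X\vee a)=r(X)$, contradicting flatness of $X$. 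Your proof uses \ref{r3} and the ``closure-of-a-cycle'' machinery, whereas the paper uses only Lemma~\ref{lem:hyper_cycle} and Proposition~\ref{prop:projection}; both lean on Lemma~\ref{lem:hyper_cycle}. Your route is arguably shorter and avoids building the intermediate chain of flats, at the cost of implicitly re-deriving Lemma~\ref{lem:closure_of_cyc_space_is_cyclic} in a special case; the paper's route has the minor side benefit of explicitly exhibiting a maximal chain of flats from $X$ to $\cyc(X)$, which is reused in the proof of Lemma~\ref{lem:cyc_rank_identity}.
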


\begin{proof}
    Let $X\in\mathcal{L}$ be a flat of $\mP$, and let $C=\cyc(X)$. If $C=X$, then the statement holds trivially, so suppose otherwise. Since $X$ is not cyclic, there exists $H \in \mH(X)$ such that $0<r(X)-r(H)=r(a)$ for all $a\in \mathcal{A}(X)\backslash\mathcal{A}(H)$. Therefore, by Lemma~\ref{lem:hyper_cycle}, we have that $C\leq H$ for such $H \in \mH(X)$. Furthermore, for any $a\in\mathcal{A}(\mathcal{L})\setminus\mathcal{A}(X)$, we have $r(H\vee a)-r(H)\geq r(X\vee a)-r(X)>0$ by submodularity and since $X$ is a flat. For any $a\in\mathcal{A}(X)\setminus\mathcal{A}(H)$, we have $H\vee a=X$ and so $r(H\vee a)-r(H)>0$. If $C=H$, then this provides the required flat of $\mP$. Otherwise, we observe that $C$ is also $\textup{cyc}(H)$, so we can repeat the argument with $H$ in place of $X$ and iterate until we obtain a flat $H'$ with $C=H'$.
\end{proof}

\begin{definition}\label{def:closure_function}
    For each $X\in\mathcal{L}$, the {\em closure} of $X$ in $\mP$ is given by
    $$\cl(X)=\bigvee\{x\in\mathcal{L}:r(X\vee x)=r(X)\}.$$\end{definition}

For any $X \in \mL$, the closure of $X$ in $\mP$ is clearly a flat of $\mP$. Moreover, if $X$ is already a flat of $\mP$, then $X = \cl(X)$. 

\begin{remark}
    By Lemma~\ref{lem:cyc_join}, we deduce that the set of cycles of $\mathcal{P}$ forms a lattice wherein for two cycles $X$ and $Y$, their join is $X\vee Y$ and their meet is $\textup{cyc}(X\wedge Y)$. Analogously to Lemma~\ref{lem:cyc_join}, it is easy to deduce that the meet of two flats is also a flat. Therefore, the set of flats forms a lattice wherein for two flats $X$ and $Y$, their join is $\textup{cl}(X\vee Y)$ and their meet is $X\wedge Y$.
\end{remark}

\begin{lemma}\label{lem:closure_of_cyc_space_is_cyclic}
    If $C$ is cyclic in $\mP$, then $\cl(C)$ is cyclic.
\end{lemma}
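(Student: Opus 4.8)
The plan is to show directly that $\cl(C)$ satisfies Definition~\ref{def:cyclic_space}. Write $F:=\cl(C)$, so $C\leq F$ and $r(F)=r(C)$. Let $H\in\mH(F)$ be a coatom of $[\zero,F]$. We must show that either $r(H)=r(F)$ or that $0<r(F)-r(H)<r(a)$ for some $a\in\mathcal{A}(F)\setminus\mathcal{A}(H)$. The key dichotomy is whether $C\leq H$ or $C\nleq H$.

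First I would handle the case $C\nleq H$. Then $[H,F]\mydn[C\wedge H,C]$, and Lemma~\ref{lem:hyper_cycle} (applied with $X=F$) immediately gives $r(F)-r(H)<r(a)$ for some $a\in\mathcal{A}(F)\setminus\mathcal{A}(H)$, which is exactly condition~2 (and it forces $r(F)-r(H)>0$, since $r(a)\le t\cdot\h(a)=t>0$ would otherwise be contradicted — more simply, the strict inequality $r(F)-r(H)<r(a)$ with $r(F)-r(H)\ge 0$ and the fact that $a\nleq H$ means $a\vee H=F$, so actually I should check $r(F)-r(H)\ge 0$ always and if it equals $0$ we are in condition~1 anyway). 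So in this case we are done.

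The remaining case is $C\leq H$, i.e. $F$ has a coatom $H$ with $C\leq H\lessdot F$. Here I want to conclude $r(H)=r(F)$, which is condition~1. The idea: since $H\lessdot F$ and $F=\cl(C)$, pick any atom $a\in\mathcal{A}(F)$ with $a\nleq H$; then $H\dot\vee a=F$ by modularity and the cover relation. Now I need $r(H\vee a)=r(H)$. Since $a\leq F=\cl(C)$, by definition of closure there is some decomposition... the cleanest route is: $\cl(C)$ is the join of all $x$ with $r(C\vee x)=r(C)$, and a standard submodularity argument (as in the matroid case) shows $r(\cl(C))=r(C)$ and moreover $r(C\vee y)=r(C)$ for every $y\leq\cl(C)$. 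Applying this with $y=H$ gives $r(C\vee H)=r(C)$, i.e. $r(H)=r(C)=r(F)$ since $C\leq H\leq F$. This is the step I expect to need a small lemma or an explicit citation to the "easy to see" remark preceding the statement; if that remark is not quite strong enough I would prove $r(C\vee y)=r(C)$ for all $y\leq\cl(C)$ by writing $y$ as a join of atoms each satisfying the defining condition and iterating submodularity. That handles the case $C\leq H$ and completes the verification that $F$ is cyclic.

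The main obstacle is the second case: making rigorous that $r$ is constant on the whole interval $[C,\cl(C)]$, not merely that $r(\cl(C))=r(C)$. In the Boolean/matroid setting this is the standard fact that the closure operator is "rank-preserving downward", proved by induction on $\h(\cl(C))-\h(C)$ using \ref{r3}; the same induction works verbatim on a modular complemented lattice, using the Isomorphism Theorem (Theorem~\ref{thm:isom_thm_lattices}) and Proposition~\ref{prop:projection} to control ranks along covers. Once that is in hand, both cases close cleanly, so I would either invoke it as a previously-noted property of $\cl$ or insert a one-line inductive justification before splitting into the two cases.
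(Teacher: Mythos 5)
Your proof is correct and takes essentially the same route as the paper: split on whether $C\leq H$ or $C\nleq H$, apply Lemma~\ref{lem:hyper_cycle} in the latter case, and in the former case use that $r$ is constant on $[C,\cl(C)]$ (equivalently $r(\cl(C))=r(C)$, proved by iterated submodularity) so that $r(H)=r(\cl(C))$. The paper's proof is terser and does not spell out the constancy of $r$ on $[C,\cl(C)]$; you correctly identify this as the one step needing a brief submodularity argument, and your proposed fix is the standard one.
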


\begin{proof}
    Let $C\in\mathcal{L}$ be cyclic and let $Z=\textup{cl}(C)$. Let $H\in\mH(Z)$. If $C\leq H$, then $r(H)=r(Z)$. If $C\nleq H$, then by Lemma~\ref{lem:hyper_cycle} we have that
    there exists $a \in \mA(Z) \backslash \mA(H)$ such that $r(Z)-r(H)<r(a)$. It follows that $Z$ is cyclic.
\end{proof}

It is also clear that the closure of a cycle of $\mP$ is the minimal flat that contains that cycle. Lemma~\ref{lem:cyc(F)_is_also_flat} and Lemma~\ref{lem:closure_of_cyc_space_is_cyclic} therefore yield the following proposition.

\begin{proposition}\label{prop:cyc_flat_lattice}
    The poset of cyclic flats of an $\mL$-polymatroid is a lattice $(\mathcal{Z},\wedge_\mathcal{Z},\vee_\mathcal{Z})$ such that for all $Z_1,Z_2\in\mathcal{Z}$, we have $Z_1\wedge_\mathcal{Z}Z_2=\cyc(Z_1\wedge Z_2)$ and $Z_1\vee_\mathcal{Z}Z_2=\cl(Z_1\vee Z_2).$
\end{proposition}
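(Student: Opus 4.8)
The plan is to verify that the two binary operations $\wedge_\mathcal{Z}$ and $\vee_\mathcal{Z}$ are well-defined on the set $\mathcal{Z}$ of cyclic flats, and that they compute the meet and join with respect to the partial order inherited from $\mathcal{L}$; once this is done, the poset $(\mathcal{Z},\leq)$ has all binary meets and joins and (being finite, since $\mL$ has no infinite chains) a minimum and a maximum, hence is a lattice. First I would check well-definedness: given $Z_1,Z_2\in\mathcal{Z}$, the element $Z_1\wedge Z_2$ lies in $\mathcal{L}$, so $\cyc(Z_1\wedge Z_2)$ is defined and, by Lemma~\ref{lem:cyc_join} (or rather the remark following it), is the unique maximal cycle contained in $Z_1\wedge Z_2$; by Lemma~\ref{lem:closure_of_cyc_space_is_cyclic} together with Lemma~\ref{lem:cyc(F)_is_also_flat} one must argue it is a \emph{flat}. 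The cleanest route: $Z_1\wedge Z_2$ is a meet of flats, hence a flat of $\mP$ (a standard fact, which I would prove directly from the definition of flat — if $x\nleq Z_1\wedge Z_2$ then $x\nleq Z_i$ for some $i$, so $r(Z_i\vee x)>r(Z_i)$, and submodularity via Proposition~\ref{prop:projection} applied to $[Z_1\wedge Z_2, (Z_1\wedge Z_2)\vee x]\mydn[Z_i,Z_i\vee x]$ forces $r((Z_1\wedge Z_2)\vee x)>r(Z_1\wedge Z_2)$). Then $\cyc$ of a flat is a cyclic flat by Lemma~\ref{lem:cyc(F)_is_also_flat}, so $Z_1\wedge_\mathcal{Z}Z_2\in\mathcal{Z}$. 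Dually, $Z_1\vee Z_2$ is cyclic by Lemma~\ref{lem:cyc_join}, and $\cl$ of a cycle is a cyclic flat by Lemma~\ref{lem:closure_of_cyc_space_is_cyclic} combined with the fact that a closure is always a flat; so $Z_1\vee_\mathcal{Z}Z_2\in\mathcal{Z}$.

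Next I would verify the universal properties. For the meet: $\cyc(Z_1\wedge Z_2)\leq Z_1\wedge Z_2\leq Z_i$ for $i=1,2$, so it is a lower bound in $\mathcal{Z}$. Conversely, if $W\in\mathcal{Z}$ with $W\leq Z_1$ and $W\leq Z_2$, then $W\leq Z_1\wedge Z_2$, and since $W$ is itself cyclic, $W$ is contained in the unique maximal cycle $\cyc(Z_1\wedge Z_2)$; hence $\cyc(Z_1\wedge Z_2)$ is the greatest lower bound. For the join: $Z_1\vee Z_2\leq\cl(Z_1\vee Z_2)$ since closure is extensive, so $\cl(Z_1\vee Z_2)$ is an upper bound in $\mathcal{Z}$; and if $W\in\mathcal{Z}$ with $Z_1\leq W$ and $Z_2\leq W$, then $Z_1\vee Z_2\leq W$, and since $W$ is a flat, $\cl(Z_1\vee Z_2)$ is the minimal flat containing $Z_1\vee Z_2$ (noted in the text right before the proposition), so $\cl(Z_1\vee Z_2)\leq W$. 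Thus $\vee_\mathcal{Z}$ is the least upper bound.

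Finally I would note that $\mathcal{Z}$ is non-empty — for instance $\cl(\cyc(\one))$ is a cyclic flat, and $\cyc(\zero)=\zero$ need not be a flat in general, but $\cl(\zero)$ is the minimal flat and $\cyc$ of it gives the bottom of $\mathcal{Z}$; more simply, having binary meets and joins in a finite poset, $\mathcal{Z}$ automatically has a least and greatest element, so it is a (bounded) lattice. I do not expect any genuine obstacle here: the only mildly delicate point is the claim that a meet of flats is a flat, which is not stated in the excerpt as a named lemma and must be argued from the definition via Proposition~\ref{prop:projection}; everything else is a direct assembly of Lemmas~\ref{lem:cyc_join}, \ref{lem:cyc(F)_is_also_flat} and \ref{lem:closure_of_cyc_space_is_cyclic} together with the uniqueness of the maximal cycle and the minimality of the closure flat.
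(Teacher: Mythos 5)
Your proof is correct and follows the same route as the paper's, but is considerably more detailed: the paper's proof of this proposition is just two sentences citing Lemma~\ref{lem:cyc(F)_is_also_flat} and Lemma~\ref{lem:closure_of_cyc_space_is_cyclic}, leaving implicit both that $Z_1\wedge Z_2$ is a flat (so that Lemma~\ref{lem:cyc(F)_is_also_flat} applies) and that $Z_1\vee Z_2$ is cyclic (Lemma~\ref{lem:cyc_join}), and not checking the universal properties at all. You supply all of these, which is the right thing to do; your direct argument that a meet of flats is a flat is exactly what the paper tacitly uses, and your verification that $\cyc(Z_1\wedge Z_2)$ and $\cl(Z_1\vee Z_2)$ are greatest lower and least upper bounds in $\mZ$ rests correctly on the unique-maximal-cycle and minimal-flat-containing-a-cycle remarks.

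One small slip: in the argument that $Z_1\wedge Z_2$ is a flat, you wrote $[Z_1\wedge Z_2,(Z_1\wedge Z_2)\vee x]\mydn[Z_i,Z_i\vee x]$, but under Definition~\ref{def:updown} that relation would require $Z_i=(Z_1\wedge Z_2)\wedge(Z_i\vee x)$, which fails whenever $Z_1\wedge Z_2<Z_i$. What you want (and what your intended use of Proposition~\ref{prop:projection} needs) is $[Z_1\wedge Z_2,(Z_1\wedge Z_2)\vee x]\myup[Z_i,Z_i\vee x]$, or equivalently $[Z_i,Z_i\vee x]\mydn[Z_1\wedge Z_2,(Z_1\wedge Z_2)\vee x]$; either way Proposition~\ref{prop:projection} then gives $r\bigl((Z_1\wedge Z_2)\vee x\bigr)-r(Z_1\wedge Z_2)\geq r(Z_i\vee x)-r(Z_i)>0$, as you need. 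This is purely a notational mix-up of the two symbols; the inequality and the rest of the argument are sound.
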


\begin{example}
    The lattice of cyclic flats of the $q$-polymatroid shown in Figure~\ref{fig:alt_cyc_def_eg} is the chain 
    $\mathcal{Z}=\{0,\langle e_2,e_3\rangle,\mathbb{F}_2^3\}.$
\end{example}

\begin{definition}
    For each $X\in\mathcal{L}$, we define
    $\mathcal{B}(X)=\{\beta\subseteq\mA(X):|\beta|=\textup{h}(X),\vee\beta=X\}.$
\end{definition}

If we consider $X$ to be an element of $\mathcal{L}_\mathbf{B}\times\mathcal{L}_\mathbf{V}$, say $X=(X_{\mathbf{B}},X_{V_1},\dots,X_{V_\ell})$, then every member of $\mathcal{B}(X)$ has the form 
    $ \{ (\{b\},0_{\mL_1},\dots,0_{\mL_\ell} ): b \in X_\mathbf{B}\} \cup \{(\emptyset,\langle b_1 \rangle ,0_{\mL_2},\dots,0_{\mL_\ell} ): b_1 \in \beta_{\mathbf{V}_1}\}\cup \cdots \cup \{(\emptyset,0_{\mL_1},0_{\mL_2},\dots,0_{\mL_{\ell-1}}, \langle b_\ell \rangle): b_\ell \in \beta_{\mathbf{V}_\ell}\} $, where for each $j$, $\beta_j$ denotes a basis of the vector space $X_{V_j}$ and $\langle b_j \rangle$ is the one dimensional subspace of $V_j$ spanned by the vector $b_j$.

A key part of our approach to obtain a generalisation of \cite{Csirm20} is to use a convolution of the restriction of the rank function of $\mP$ to its lattice of cyclic flats, with a function $\mu_r$. We will show that the resulting convolution coincides with the rank function of $\mP$. 

\begin{definition}\label{not:mu_funct}
Let $f:\mA(\mathcal{L})\to\mathbb{R}_{\geq0}$ be an arbitrary function. We define the function $\mu_f:\mathcal{L}\to\mathbb{R}_{\geq0}$ such that for all  $A \in \mL$ we have
\[
\mu_f(A)=\textup{min}\left\{\sum_{x\in \beta}f(x):\beta\in \mathcal{B}(A)\right\}.
\]
\end{definition}

The function $\mu_f$ of Definition~\ref{not:mu_funct} is a generalisation of the analogous function found in \cite{Csirm20}, where $\mL$ is the Boolean lattice, and thus $\mu_f(A)=\sum_{a\in\mA(A)}f(a)$. In the case that $\mathcal{L}$ is a Boolean lattice, it is easy to see that $\mu_f$ is a valuation (or modular function) (see \cite[Chapter X]{birkhoff_lattice}) on $\mL$. That is, for all $A,B\in\mathcal{L}$ we have 
$$\mu_f(A)+\mu_f(B)=\mu_f(A\wedge B)+\mu_f(A\vee B).$$


\begin{remark}
    For an arbitrary $\mL$-polymatroid $\mP=(\mL,r)$, the function $\mu_r$ is not necessarily increasing on $\mathcal{L}$. Moreover, if $\mu_r$ is not increasing, then it is not a valuation on $\mL$. Indeed, if $\mu_r$ were a valuation, then for any $A,B \in \mL$ satisfying $A \wedge B = \zero$, we would have  
    \[
    \mu_r(A\dot \vee B) = \mu_r(A\vee B) + \mu_r(A \wedge B) = \mu_r(A) + \mu_r(B) \geq \mu_r(A).
    \]  
    We illustrate this remark in the following example.
\end{remark}

\begin{example}
    Consider the $q$-polymatroid of rank 2 shown in Figure \ref{fig:munotinc}, whose lattice of cyclic flats is $\mathcal{Z}=\{0,\mathbb{F}_2^3\}$. It is straightforward to check that $\mu_r(\langle e_1,e_2 \rangle)=4$, while 
    $\mu_r(\one)=3$.
    If $\mu_r$ were a valuation, then it would be non-decreasing, so $\mu_r$ is not a valuation. 
    \begin{figure}[ht!]
    \centering
    \scalebox{1.5}{
    \begin{tikzpicture}
        \node[] (0) at (0,0) {\tiny $0$};
        \node[] (1) at (0,3) {\tiny $\mathbb{F}_2^3$};
        \node[] (a) at (-3,1) {\tiny $100$};
        \node[] (b) at (-2,1) {\tiny $010$};
        \node[] (c) at (-1,1) {\tiny $110$};
        \node[] (d) at (0,1) {\tiny $111$};
        \node[] (e) at (1,1) {\tiny $011$};
        \node[] (f) at (2,1) {\tiny $001$};
        \node[] (g) at (3,1) {\tiny $101$};
        \node[] (h) at (-3,2) {\tiny $\begin{smallmatrix}100\\010\end{smallmatrix}$};
        \node[] (i) at (-2,2) {\tiny $\begin{smallmatrix}100\\011\end{smallmatrix}$};
        \node[] (j) at (-1,2) {\tiny $\begin{smallmatrix}100\\001\end{smallmatrix}$};
        \node[] (k) at (0,2) {\tiny $\begin{smallmatrix}010\\001\end{smallmatrix}$};
        \node[] (l) at (1,2) {\tiny $\begin{smallmatrix}101\\010\end{smallmatrix}$};
        \node[] (m) at (2,2) {\tiny $\begin{smallmatrix}101\\011\end{smallmatrix}$};
        \node[] (n) at (3,2) {\tiny $\begin{smallmatrix}110\\001\end{smallmatrix}$};
        \path [-,black] (0) edge (a);
        \path [-,black] (0) edge (b);
        \path [-,black] (0) edge (c);
        \path [-,red] (0) edge (d);
        \path [-,red] (0) edge (e);
        \path [-,red] (0) edge (f);
        \path [-,red] (0) edge (g);
        \path [-,green] (1) edge (h);
        \path [-,green] (1) edge (i);
        \path [-,green] (1) edge (j);
        \path [-,green] (1) edge (k);
        \path [-,green] (1) edge (l);
        \path [-,green] (1) edge (m);
        \path [-,green] (1) edge (n);
        \path [-,green] (a) edge (h);
        \path [-,green] (a) edge (i);
        \path [-,green] (a) edge (j);
        \path [-,green] (b) edge (h);
        \path [-,green] (b) edge (k);
        \path [-,green] (b) edge (l);
        \path [-,green] (c) edge (h);
        \path [-,green] (c) edge (m);
        \path [-,green] (c) edge (n);
        \path [-,red] (d) edge (i);
        \path [-,red] (d) edge (l);
        \path [-,red] (d) edge (n);
        \path [-,red] (e) edge (i);
        \path [-,red] (e) edge (k);
        \path [-,red] (e) edge (m);
        \path [-,red] (f) edge (j);
        \path [-,red] (f) edge (k);
        \path [-,red] (f) edge (n);
        \path [-,red] (g) edge (j);
        \path [-,red] (g) edge (l);
        \path [-,red] (g) edge (m);
    \end{tikzpicture}}
    \caption{A $q$-polymatroid with cover weights (\textcolor{green}{0},\textcolor{red}{1},2).}
    \label{fig:munotinc}
    \end{figure} 
\end{example}

For the remainder of the paper, we fix $f$ to be a non-negative real-valued function defined on the atoms of $\mathcal{L}$.

Lemma~\ref{lem:mu_submod} gives an instance of submodularity that we will use frequently.

\begin{lemma}\label{lem:mu_submod}
    Let $A,B\in\mathcal{L}$ such that $A\wedge B=\zero$. Then we have $\mu_f(A\dot\vee B)\leq \mu_f(A)+\mu_f(B)$.
\end{lemma}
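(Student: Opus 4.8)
The plan is to reduce the inequality $\mu_f(A\dot\vee B)\leq \mu_f(A)+\mu_f(B)$ to a statement about combining generating sets of atoms. Recall that $\mu_f(A)$ is the minimum of $\sum_{x\in\alpha}f(x)$ over $\alpha\in\mathcal{B}(A)$, and similarly for $B$; so the natural strategy is to pick optimal sets $\alpha\in\mathcal{B}(A)$ and $\gamma\in\mathcal{B}(B)$ achieving these minima, and show that $\alpha\cup\gamma$ contains (or is) a set in $\mathcal{B}(A\dot\vee B)$. If so, then because $f$ is non-negative and $|\alpha\cup\gamma|\leq|\alpha|+|\gamma|=\h(A)+\h(B)=\h(A\dot\vee B)$ (the last equality by modularity, using $A\wedge B=\zero$), we would get $\mu_f(A\dot\vee B)\leq\sum_{x\in\alpha\cup\gamma}f(x)\leq\sum_{x\in\alpha}f(x)+\sum_{x\in\gamma}f(x)=\mu_f(A)+\mu_f(B)$.

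The step that needs care is verifying that $\alpha\cup\gamma$ really does contain a member of $\mathcal{B}(A\dot\vee B)$. We know $\vee\alpha=A$ and $\vee\gamma=B$, so $\vee(\alpha\cup\gamma)=A\vee B=A\dot\vee B$. Set $m:=\h(A\dot\vee B)=\h(A)+\h(B)$. Since $\alpha\cup\gamma$ is a set of atoms of $A\dot\vee B$ joining to $A\dot\vee B$, and $\h(A\dot\vee B)=m$, any maximal chain in $[\zero,A\dot\vee B]$ has length $m$; I would pick a minimal subset $\delta\subseteq\alpha\cup\gamma$ with $\vee\delta=A\dot\vee B$. By the standard exchange/independence argument in a modular lattice — or more concretely, by greedily building up a chain $\zero=X_0\lessdot X_1\lessdot\cdots$ where $X_{i+1}=X_i\vee x$ for a suitably chosen $x\in\alpha\cup\gamma$ with $x\nleq X_i$, which is possible as long as $X_i\neq A\dot\vee B$ since the $x$'s generate — one obtains an independent set $\delta$ of size exactly $\h(A\dot\vee B)=m$ with $\vee\delta=A\dot\vee B$, i.e.\ $\delta\in\mathcal{B}(A\dot\vee B)$. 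Then $\sum_{x\in\delta}f(x)\leq\sum_{x\in\alpha\cup\gamma}f(x)\leq\sum_{x\in\alpha}f(x)+\sum_{x\in\gamma}f(x)$, again by non-negativity of $f$.

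One small point to watch: if $A=\zero$ or $B=\zero$ the statement is immediate from $\mu_f(\zero)=0$, so assume both are non-zero and the optimal sets $\alpha,\gamma$ are non-empty. The main (mild) obstacle is the bookkeeping in extracting $\delta\in\mathcal{B}(A\dot\vee B)$ from $\alpha\cup\gamma$: one must check that a set of atoms whose join has height $m$ always contains an independent generating subset of size exactly $m$, which is where modularity (via Theorem~\ref{thm:isom_thm_lattices} and the resulting height identity $\h(X\vee Y)+\h(X\wedge Y)=\h(X)+\h(Y)$) is used — each time we adjoin an atom $x\nleq X_i$ the height of $X_i\vee x$ is exactly $\h(X_i)+1$ since $\h(x)=1$ and $\h(X_i\wedge x)=0$, so the chain terminates after exactly $m$ steps. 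Everything else is monotonicity of sums of non-negative terms.
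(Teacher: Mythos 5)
Your proof is correct and takes essentially the same approach as the paper's, which combines minimizing generating sets $\beta_1\in\mathcal{B}(A)$ and $\beta_2\in\mathcal{B}(B)$. The paper's version is shorter: since $A\wedge B=\zero$ forces $\mathcal{A}(A)\cap\mathcal{A}(B)=\varnothing$, the union $\beta_1\cup\beta_2$ is a disjoint union of size exactly $\textup{h}(A)+\textup{h}(B)=\textup{h}(A\dot\vee B)$ and so already lies in $\mathcal{B}(A\dot\vee B)$, making your extraction of $\delta$ and the greedy chain-building argument unnecessary.
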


\begin{proof}
    Let $\beta_1\in\mathcal{B}(A)$ and $\beta_2\in\mathcal{B}(B)$ such that 
    $\displaystyle \mu_f(A) = \sum_{a\in\beta_1} f(a)$ and $\displaystyle \mu_f(B) = \sum_{a\in\beta_2} f(a)$.
    Since $A\wedge B=\zero$, recalling that $\mathcal{L}=\mathcal{L}_\mathbf{B}\times\mathcal{L}_\mathbf{V}$, we have $\beta_1\dot\cup\beta_2 \in\mathcal{B}(A\dot\vee B)$. Thus, we obtain
    \[
    \mu_f(A\dot\vee B) \leq \sum_{a\in\beta_1\dot\cup\beta_2} f(a) 
    = \sum_{a\in\beta_1} f(a) + \sum_{a\in\beta_2} f(a) 
    = \mu_f(A) + \mu_f(B). \qedhere
    \]
\end{proof}

The following definition will be useful in many later statements.

\begin{definition}\label{def:layers}
    Let $X\in\mathcal{L}$, and let $\zero=H_m\lessdot\dots\lessdot H_0 =X$ be a maximal chain from $\zero$ to $X$. For $k \in [m]$, we define
    \[
    L_k=\{a\in\mA(X):a\leq H_{k-1},a\nleq H_k\},
    \]
    which we call the $k$-th {\em layer} (or simply a layer) of the chain. The list of sets $(L_1,\dots,L_m)$ is the \emph{associated layering} of this chain. 
\end{definition}

 By Theorem~\ref{thm:compmod_is_direct_prod}, and elementary results on linear algebra, we have the following statement on properties of the associated layering of a maximal chain.
\begin{lemma}\label{lem:indep_layers_2_properties}
    Let $X\in\mathcal{L}$ and let $\zero\lessdot H_m\lessdot\cdots\lessdot H_0=X$ be a maximal chain with associated layering $(L_1,\dots,L_m)$. The following properties hold:
    \begin{enumerate}
        \item Let $\{a_1,\dots,a_k\}\subseteq\mathcal{A}(\mathcal{L})$ such that $a_i\in L_{n_i}$ and $n_k<n_{k-1}<\cdots<n_1$. Then $\{a_1,\dots,a_k\}$ is independent.
        \item For any $Y\leq X$, the set $\mA(Y)$ intersects exactly $\textup{h}(Y)$ layers $L_i$ nontrivially.
        \item For any $Y \leq X$, there exists $S \subseteq[m], |S|=\h(Y)$ and $a_j \in L_j$ such that $\bigvee_{j \in S} a_j = Y$.
    \end{enumerate}
\end{lemma}

\begin{lemma}\label{lem:max_chain_ranks}
    Let $X\in\mathcal{L}$. There exists a chain $\cyc(X)=H_k\lessdot H_{k-1}\lessdot\cdots\lessdot H_0=X$ such that for all $i\in[k]$ and all $a_i\in \mathcal{A}(H_{i-1})\setminus\mathcal{A}(H_i)$, the following properties hold. 
    \begin{enumerate}
        \item $r(H_i)-r(H_{i-1})=r(a_{i})>0$.
        \item $r(X)-r(\cyc(X))=\sum_{i=1}^kr(a_i)$.
    \end{enumerate}
\end{lemma}
\begin{proof}
    The case for which $X=\cyc(X)$ clearly holds. Suppose that $X$ is not cyclic, which means that $k=\len([\cyc(X),X])>0$. By definition, there exists $H_1\in\mathcal{H}(X)$ such that $0<r(X)-r(H_1)=r(a_1)$ for all $a_1\in\mathcal{A}(X)\setminus\mathcal{A}(H_1)$. By Lemma~\ref{lem:hyper_cycle}, we have $\cyc(X)\leq H_1$. By definition, we deduce that $\cyc(H_1)=\cyc(X)$. We therefore may repeat this process until we have generated a maximal chain in $[\cyc(X),X]$. This gives us Property 1, from which Property 2 immediately follows.
\end{proof}

The following lemma generalizes \cite[Lemma 1]{Csirm20}.

\begin{lemma}\label{lem:cyc_rank_identity}
    Let $X \in \mL$ and let $A\in\mathcal{L}$ be such that $\textup{cyc}(X)\leq A\leq X$. Then, for any $\textup{cyc}(X)^c\in \cC(\textup{cyc}(X))$, we have
    \(
    r(A) = r(\textup{cyc}(X))+\mu_r(\textup{cyc}(X)^c\wedge A).
    \)
\end{lemma}

\begin{proof}
    It is clear by definition that $\cyc(A)=\cyc(X)$. The case that $A=\textup{cyc}(X)$ is trivial. Suppose that $A$ is not cyclic, which means that $k=\len([\cyc(X),A])>0$. Let $(L_1,\dots,L_k)$ be the associated layering of the chain $H_k \lessdot \cdots \lessdot H_0$. By Lemma~\ref{lem:max_chain_ranks}, it follows that for any $a_i\in L_i$ for each $i\in[k]$, we have
    \begin{align}
        r(A)-r(\cyc(X)) &= \sum_{i=1}^{k}r(a_i)\label{eq:cyc_layers_lemma}.
    \end{align}
    Since $\cyc(X)\leq A$, Proposition~\ref{prop:compdecomp} implies that $\mathbf{C}(\cyc(X))=\mathbf{C}(\cyc(X);A)$. By the modularity of $\mathcal{L}$, we deduce that $\textup{h}(\cyc(X)^c\wedge A)=k$ for any $\cyc(X)^c\in\mathbf{C}(\cyc(X))$. By Lemma~\ref{lem:indep_layers_2_properties}, we thus have that $\mathcal{A}(\cyc(X)^c\wedge A)\cap L_i\neq\varnothing$ for all $i\in[k]$. We hence can choose $b_i\in\mathcal{A}(\cyc(X)^c\wedge A)\cap L_i$ for each $i\in[k]$. By Lemma~\ref{lem:indep_layers_2_properties}, we have $\textup{h}(\bigvee_{i=1}^kb_i)=k$, from which we conclude that $\{b_1,\dots,b_k\}\in\mathcal{B}(\cyc(X)^c\wedge A)$. Recall that (\ref{eq:cyc_layers_lemma}) holds for arbitrary $a_i\in L_i$. By the definition of $\mu_r$, axiom \ref{r3}, and (\ref{eq:cyc_layers_lemma}), we hence obtain
    \[
    \mu_r(\textup{cyc}(X)^c\wedge A)\leq\displaystyle\sum_{i=1}^{k}r(b_i)=r(A)-r(\textup{cyc}(X))\leq r(\textup{cyc}(X)^c\wedge A)\leq\mu_r(\textup{cyc}(X)^c\wedge A).
    \]
    Since our choice of $\cyc(X)^c\in\mathbf{C}(\cyc(X))$ was arbitrary, the result follows.
\end{proof}

We are now ready to state the main result of this section, which says that the rank function of an $\mL$-polymatroid is determined by the lattice of cyclic flats $\mZ$, the ranks of the elements of $\mZ$, and the ranks of the atoms of $\mathcal{L}$.

\begin{theorem}\label{thm:convol_identity}
    Let $\mathcal{Z}$ be the lattice of cyclic flats of $\mP=(\mathcal{L},r)$. For any $X\in\mathcal{L}$, we have 
    \[
       r(X)=\textup{min}\{r(Z)+\mu_r(Z^c\wedge X):Z\in\mathcal{Z}, Z^c\in \mathbf{C}(Z;X)\}.
    \]   
    Moreover, if $Z=\textup{cyc}(\textup{cl}(X))$, then
    \(r(X)=r(Z)+\mu_r(X\wedge Z^c)\) for any $Z^c\in\mathbf{C}(Z;X)$.
\end{theorem}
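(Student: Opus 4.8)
The plan is to prove the displayed identity in two halves. First I would show that for \emph{every} admissible pair $(Z,Z^c)$ with $Z\in\mathcal{Z}$ and $Z^c\in\cC(Z;X)$ one has $r(X)\le r(Z)+\mu_r(Z^c\wedge X)$, so that $r(X)$ is a lower bound for the minimand. Then I would verify that the specific choice $Z:=\cyc(\cl(X))$ attains equality for \emph{every} $Z^c\in\cC(Z;X)$; since $\cC(Z;X)\neq\varnothing$ by Lemma~\ref{lem:compl_partition}, these two facts together yield both the minimum formula and the ``moreover'' statement.

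For the inequality, fix $Z\in\mathcal{Z}$ and $Z^c\in\cC(Z;X)$. By Definition~\ref{def:decomposes} we have $X=(X\wedge Z)\dot\vee(X\wedge Z^c)$ with $(X\wedge Z)\wedge(X\wedge Z^c)=\zero$, so \ref{r3} gives $r(X)\le r(X\wedge Z)+r(X\wedge Z^c)$; now $r(X\wedge Z)\le r(Z)$ by \ref{r2}, and $r(X\wedge Z^c)\le\mu_r(X\wedge Z^c)$ by the inequality $r(A)\le\mu_r(A)$ recorded after Definition~\ref{not:mu_funct}. Hence $r(X)\le\min\{r(Z)+\mu_r(Z^c\wedge X)\}$ over all such pairs.

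For equality with $Z:=\cyc(\cl(X))$, set $F:=\cl(X)$. Then $F$ is a flat with $r(F)=r(X)$ (the closure of $X$ does not change rank, a standard consequence of submodularity), and by Lemma~\ref{lem:cyc(F)_is_also_flat} the element $Z$ is a cyclic flat, so $Z\in\mathcal{Z}$, with $Z\le F$. Pick $Z^c\in\cC(Z;X)$ and put $A:=X\vee Z$, so that $Z\le A\le F$ and therefore $r(A)=r(X)$ (the rank is constant on $[X,F]$). Applying Lemma~\ref{lem:cyc_rank_identity} to $F$ with the element $A$ (recall $\cyc(F)=Z$) and the complement $Z^c\in\cC(Z)$ gives $r(A)=r(Z)+\mu_r(Z^c\wedge A)$. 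Finally $A=X\vee Z=Z\vee(X\wedge Z^c)$ using $X=(X\wedge Z)\dot\vee(X\wedge Z^c)$, and since $X\wedge Z^c\le Z^c$ the modular law yields $Z^c\wedge A=(Z^c\wedge Z)\vee(X\wedge Z^c)=X\wedge Z^c$. Thus $r(X)=r(A)=r(Z)+\mu_r(X\wedge Z^c)$, which is the ``moreover'' claim, and combined with the previous paragraph it pins down the minimum.

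I expect the main obstacle to be conceptual rather than computational: one must notice that $\cyc(\cl(X))$ need not lie below $X$, so the argument cannot apply Lemma~\ref{lem:cyc_rank_identity} to $X$ directly and must instead pass to $A=X\vee\cyc(\cl(X))$, exploiting that the rank is unchanged throughout $[X,\cl(X)]$; dovetailing with this, the identity $Z^c\wedge(Z\vee X)=Z^c\wedge X$ is exactly where the decomposition property built into $\cC(Z;X)$ is used, and everything else reduces to \ref{r2}, \ref{r3}, the bound $r\le\mu_r$, and the already-established Lemma~\ref{lem:cyc_rank_identity}.
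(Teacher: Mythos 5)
Your proof is correct and mirrors the paper's argument: both halves are the same — the lower bound via submodularity, monotonicity, and $r\le\mu_r$, followed by equality at $Z=\cyc(\cl(X))$ obtained by applying Lemma~\ref{lem:cyc_rank_identity} to $Z\vee X$ inside $[Z,\cl(X)]$ and reducing to the identity $Z^c\wedge(Z\vee X)=X\wedge Z^c$. The only difference is in that last identity: the paper compares heights on both sides, whereas you rewrite $Z\vee X=Z\vee(X\wedge Z^c)$ and invoke the modular law directly — a slightly cleaner route to the same conclusion.
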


\begin{proof}
    By \ref{r3}, for any $Z\in\mathcal{L}$ and $Z^c\in \mathbf{C}(Z;X)$ we have     
    $$r(X) = r((Z \wedge X) \dot\vee (Z^c \wedge X))\leq r(Z)+r(Z^c\wedge X)\leq r(Z)+\mu_r(Z^c\wedge X).$$
    In particular, 
    $$r(X) \leq \min \{ r(Z)+\mu_r(Z^c\wedge X):Z\in\mathcal{Z}, Z^c\in \mathbf{C}(Z;X)\}.$$
    
    We now show that this minimum is attained for $Z = \cyc(\cl(X))$.
    Let $F=\textup{cl}(X)$ and let $Z=\textup{cyc}(F)$. 
    Since $Z\leq Z\vee X\leq F$, we use the definition of $\textup{cl}(X)$, as well as \ref{r2}, to deduce 
    $$r(X)\leq r(Z \vee X)\leq r(F)=r(X).$$ 
    Furthermore, by Lemma~\ref{lem:cyc_rank_identity}, for any $Z^c\in\mathbf{C}(Z)$ we obtain 
    \[
      r(X)=r(Z\vee X)=r(Z)+\mu_r(Z^c\wedge(Z\vee X)).
    \]
    
    We now show that $Z^c\wedge X= Z^c\wedge(Z\vee X)$ whenever $Z^c\in \mathbf{C}(Z;X)$.
    Clearly, we have $Z^c\wedge X\leq Z^c\wedge(Z\vee X)$. To show equality, we compute their heights. Since $Z\leq Z\vee X$, the modularity of $\mathcal{L}$ gives us that
    $$\textup{h}(Z^c\wedge(Z\vee X))=\textup{h}(Z\vee X)-\textup{h}(Z)=\textup{h}(X)+\textup{h}(Z)-\textup{h}(X\wedge Z)-\textup{h}(Z)=\textup{h}(X)-\textup{h}(X\wedge Z).$$
    Since $Z^c \in \cC(Z;X)$ then $\textup{h}(X)=\textup{h}(X\wedge Z)+\textup{h}(X\wedge Z^c)$ and hence we get $Z^c\wedge X= Z^c\wedge(Z\vee X)$. The result follows.
\end{proof}

We include the following simple example to highlight the use of decomposing complements and the function $\mu_r$.

\begin{example}
    Consider the $q$-polymatroid shown in Figure~\ref{fig:alt_cyc_def_eg}, whose lattice of cyclic flats is $\mathcal{Z}=\{0,\langle e_2,e_3\rangle,\mathbb{F}_2^3\}$. We consider the space $\langle e_1,e_2\rangle\leq\mathbb{F}_2^3$. In the following, we compute $r(Z)+\mu_r(Z^c\wedge\langle e_1,e_2\rangle)$ for each $Z\in\mathcal{Z}$ and each $Z^c\in\mathbf{C}(Z;\langle e_1,e_2\rangle)$. Observe that 
    $$\mathbf{C}(\mathbb{F}_2^3;\langle e_1,e_2\rangle)=\{0\},\quad\mathbf{C}(0;\langle e_1,e_2\rangle)=\{\mathbb{F}_2^3\},\textup{ and}$$
    $$\mathbf{C}(\langle e_2,e_3\rangle;\langle e_1,e_2\rangle)=\{\langle e_1\rangle,\langle e_1+e_2\rangle\}.$$
    We now compute
    \begin{align*}
        r(\mathbb{F}_2^3)+\mu_r(0\wedge\langle e_1,e_2\rangle)&=3+0=3,\\
        r(\langle e_2,e_3\rangle)+\mu_r(\langle e_1\rangle\wedge \langle e_1,e_2\rangle)&=2+2=4,\\
        r(\langle e_2,e_3\rangle)+\mu_r(\langle e_1+e_2\rangle\wedge \langle e_1,e_2\rangle)&=2+2=4,\\
        r(0)+\mu_r(\mathbb{F}_2^3\wedge\langle e_1,e_2\rangle)&=0+4=4,
    \end{align*}
    which agrees with $r(\langle e_1,e_2\rangle)=3$. We highlight the importance of taking a decomposing complement by pointing out that if we take $\langle e_2,e_3\rangle^c=\langle e_1+e_3\rangle\in\mathbf{C}(\langle e_2,e_3\rangle)\setminus\mathbf{C}(\langle e_2,e_3\rangle;\langle e_1,e_2\rangle)$, then we have
    $$r(\langle e_2,e_3\rangle)+\mu_r(\langle e_1+e_3\rangle\wedge \langle e_1,e_2\rangle)=r(\langle e_2,e_3\rangle)+\mu_r(0)=2+0=2<r(\langle e_1,e_2\rangle).$$
\end{example}

\begin{remark}\label{rem:q-mat_mu}
    If $\mP$ is a $q$-matroid, then for any $X\in\mathcal{L}$, we have  
    \[
        \mu_r(X) = \textup{h}(X) - \textup{h}(\textup{cl}(\zero) \wedge X) = \dim(X) - \dim(\textup{cl}(\zero) \cap X).
    \]
    For example, if $Z\in\mathcal{L}$ is a cyclic flat, then any $Z^c\in\mathbf{C}(Z)$ contains no loops. Therefore, if $Z=\textup{cyc}(\textup{cl}(X))$, then for any complement $Z^c$ of $Z$ in $\mL$, we have  
    \[
        r(X) = r(Z) + \mu_r(X \cap Z^c) = r(Z) + \dim(X \cap Z^c) = r(Z) + \dim(X / (X \cap Z)).
    \]
    See \cite[Lemma 2.25]{AlfByr22}, for example.
\end{remark}

We now recall some fundamental results from matroid theory that we will use to analyse the structure of the function $\mu_f$. The following theorem recalled from \cite{welsh} is stated in terms of geometric lattices; since complemented modular lattices form a subclass of these, it applies in our setting.
\begin{theorem}[{\cite[Chapter 3.3 Theorem 1]{welsh}}]\label{thm:mat_flat_lattice}
    A finite lattice is isomorphic to the lattice of flats of a matroid if and only if it is geometric.
\end{theorem}
The following theorem of Rado is recalled from \cite{welsh}. For its statement, we let $E$ be a finite set, we let $\hat{f}:E\to\mathbb{R}_{\geq0}$ and we extend $\hat{f}$ to a function on $2^E$ by $\hat{f}(X)=\sum_{x\in X}\hat{f}(x)$ for each $X\subseteq E$. For any $\mathcal{I}\subseteq 2^E$, the \emph{optimisation problem} $(\mathcal{I},\hat{f})$ is to find $I \in \mathcal{I}$ of maximal cardinality such that 
$\hat{f}(I)$ is minimal. 
\begin{theorem}[{\cite[Chapter 19.1 Theorem 1]{welsh}}]\label{thm:mat_greedy}
    If $\mathcal{I}$ is the collection of independent sets of a matroid on $E$, then the greedy algorithm works for the optimisation problem $(\mathcal{I},\hat{f})$.
\end{theorem}
\begin{definition}
    Let $M=(E,\mathcal{I})$ be a matroid such that its lattice of flats $\mathcal{F}$ is isomorphic to $\mathcal{L}$. Let $\Phi:\mathcal{F}\to\mathcal{L}$ be a lattice isomorphism. Let $\hat{f}:E\to\mathbb{R}_{\geq0}$ be defined by $\hat{f}(x)=f(\Phi(\cl(\{x\})))$ for all $x \in E$ and extend $\hat{f}$ to $2^E$ by $\hat{f}(X)=\sum_{x\in X}\hat{f}(x)$ for each $X\subseteq E$.
    Let $S \in 2^E$ and let $X \subseteq S$. 
    We say that $X$ is $\hat{f}$\emph{-minimal} in $S$ if
    $\hat{f}(X) = \min\{\hat{f}(Y) : Y \subseteq S\}$.
\end{definition}
For the remainder, we fix $M=(E,\mathcal{I})$ to be an arbitrary loopless matroid with lattice of flats $\mathcal{F}$ isomorphic to $\mL$ under the lattice isomorphism $\Phi$.

\begin{algorithm}
\caption{Greedy algorithm on the atoms of $A$ for any $f:\mA(\mL) \to \RR_{\geq 0}$ and $A \in \mL$}\label{alg:mu}
\begin{algorithmic}
\State $V \gets \zero$
\State $S \gets 0$
\While{$\mA(A) \setminus \mA(V) \neq \varnothing$}
    \State Choose $e \in \mA(A) \setminus \mA(V)$ such that $f(e)$ is minimal
    \State $S \gets S + f(e)$
    \State $V \gets V \vee e$
\EndWhile
\end{algorithmic}
\end{algorithm}

Algorithm~\ref{alg:mu} is a greedy algorithm on the atoms of $\mathcal{L}$, which computes $\mu_f(A)$ for any $A\in\mathcal{L}$.

\begin{lemma}\label{lem:greedy_alg}
    Let $A\in\mathcal{L}$ and let $f:\mA(\mL)\to \RR_{\geq 0}$. Then Algorithm~\ref{alg:mu} computes $\mu_f(A)$. 
\end{lemma}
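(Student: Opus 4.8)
The plan is to prove that the greedy algorithm computes $\mu_f(A)$ by an exchange argument, showing that the set $\beta_{\mathrm{greedy}}$ of atoms selected by Algorithm~\ref{alg:mu} lies in $\mathcal{B}(A)$ and that $\sum_{e \in \beta_{\mathrm{greedy}}} f(e) \leq \sum_{x \in \beta} f(x)$ for every $\beta \in \mathcal{B}(A)$. First I would check that $\beta_{\mathrm{greedy}} \in \mathcal{B}(A)$: at each iteration the chosen atom $e$ satisfies $e \nleq V$, so $\mathrm{h}(V \vee e) = \mathrm{h}(V) + 1$ in the modular lattice $\mathcal{L}$, and the loop terminates exactly when the running join equals $A$ (it must terminate and reach $A$, since $\mathcal{A}(A) \setminus \mathcal{A}(V) = \varnothing$ together with $V \leq A$ forces $V = A$ in a complemented modular lattice, as any relative complement of $V$ in $[\zero, A]$ would otherwise contribute a new atom). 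Hence $|\beta_{\mathrm{greedy}}| = \mathrm{h}(A)$ and $\bigvee \beta_{\mathrm{greedy}} = A$, so $\beta_{\mathrm{greedy}}$ is independent and lies in $\mathcal{B}(A)$.

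The core of the argument is the exchange step. Let $\beta_{\mathrm{greedy}} = \{e_1, \dots, e_m\}$ with the $e_i$ listed in the order selected (so $f(e_1) \leq f(e_2) \leq \cdots$ is \emph{not} assumed, but $e_i$ was $f$-minimal among atoms of $A$ not below $V_{i-1} := e_1 \vee \cdots \vee e_{i-1}$), and let $\beta = \{x_1, \dots, x_m\} \in \mathcal{B}(A)$ be arbitrary. I would argue that the two multisets of values $(f(e_i))$ and $(f(x_j))$, each sorted in increasing order, are dominated termwise by the greedy one — equivalently, that for the greedy selection ordered as produced, $f(e_i) \leq $ the $i$-th smallest value among any "fresh" choice available at step $i$. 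The key lattice fact enabling the exchange is that if $S \subseteq \mathcal{A}(A)$ with $\bigvee S = A$ and $V < A$, then some atom of $S$ is not below $V$: otherwise $V \geq \bigvee S = A$, a contradiction. Applying this at step $i$ with $V = V_{i-1}$ and $S$ chosen to be an $i$-element independent subset of $\beta$ (which exists and has join of height $i$, hence not below $V_{i-1}$ if $V_{i-1}$ has height $i-1$): among the $i$ atoms of that subset at least one, say $x_j$, is not below $V_{i-1}$, so by greedy minimality $f(e_i) \leq f(x_j)$. Arranging this carefully — for instance taking the $i$ smallest-valued elements of $\beta$ and noting that some $i$-subset of them spanning an element not below $V_{i-1}$ must exist by a counting/height argument — yields $f(e_i) \leq$ ($i$-th smallest value of $\beta$) for each $i$, and summing gives $\sum f(e_i) \leq \sum_{x \in \beta} f(x)$.

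The main obstacle I anticipate is the exchange step in a modular lattice rather than in a matroid: in a matroid one freely invokes the basis exchange axiom, but here the "independence structure" on $\mathcal{A}(A)$ induced by the height function is not literally a matroid on an abstract ground set — distinct atoms can be equal as lattice elements only trivially, but an atom can lie below a join of others without being "dependent" in a set-theoretic sense, and the relevant closure is the lattice join. So I would need to verify the matroid-like exchange property carefully: namely, that $\{\beta \subseteq \mathcal{A}(A) : \mathrm{h}(\bigvee \beta) = |\beta|\}$ forms the independent sets of a matroid on $\mathcal{A}(A)$ (this follows from modularity, since $\mathrm{h}(\bigvee \cdot)$ is a submodular, monotone, integer-valued rank function on subsets of $\mathcal{A}(A)$ taking value $1$ on singletons and $0$ on $\varnothing$), and then $\mathcal{B}(A)$ is precisely the set of bases of this matroid. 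Once this matroid $M_A$ on $\mathcal{A}(A)$ is identified, Algorithm~\ref{alg:mu} is exactly Kruskal's greedy algorithm for the minimum-weight basis of $M_A$ with weights $f$, and $\mu_f(A)$ is its weight; the correctness of the greedy algorithm for minimum-weight bases of a matroid is classical, so I would cite it (e.g. via the standard matroid greedy theorem) to conclude. I expect the bulk of the write-up to be establishing that $M_A$ is a matroid and that the algorithm's loop realizes Kruskal's procedure, after which the conclusion is immediate.
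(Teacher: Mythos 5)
Your proof is correct, but it takes a genuinely different route from the paper. The paper argues by induction on $\h(A)$: it decomposes $A=\bigl(\bigvee_{i<m}a_i\bigr)\dot\vee a_m$ where $a_1,\dots,a_m$ is the greedy sequence (with $f(a_1)\leq\cdots\leq f(a_m)$), applies Lemma~\ref{lem:mu_submod} to obtain $\mu_f(A)\leq\mu_f\bigl(\bigvee_{i<m}a_i\bigr)+f(a_m)$, squeezes this to deduce $\mu_f\bigl(\bigvee_{i<m}a_i\bigr)=\sum_{i<m}f(a_i)$, and then appeals to the induction hypothesis for the smaller element. You instead identify an abstract matroid $M_A$ on the ground set $\mA(A)$ with rank function $T\mapsto\h(\bigvee T)$ — whose submodularity is exactly the modular height identity $\h(X)+\h(Y)=\h(X\vee Y)+\h(X\wedge Y)$ together with $\bigvee(T_1\cap T_2)\leq\bigvee T_1\wedge\bigvee T_2$ — note that $\mB(A)$ is the set of bases of $M_A$ and that Algorithm~\ref{alg:mu} is the classical matroid greedy algorithm, and then invoke the standard greedy theorem. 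The paper's induction stays internal to the lattice machinery (reusing Lemma~\ref{lem:mu_submod}, which is needed anyway elsewhere), whereas your approach is more structural: it explains \emph{why} the greedy algorithm must work by locating $\mu_f$ as the minimum-weight-basis functional of a bona fide matroid, at the modest cost of verifying the matroid rank axioms and citing an external theorem. I would add that the direct exchange argument you sketch in your second paragraph — take the $i$ smallest-valued atoms of any $\beta\in\mB(A)$; their join has height $i$, hence is not $\leq V_{i-1}$ of height $i-1$, so one of them is eligible at step $i$ and greedy gives $f(e_i)\leq f(b_i)$ for the sorted orders — is already a complete, self-contained, and arguably shorter proof, so the full matroid identification is optional once you have carried out the exchange carefully.
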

\begin{proof}
    Computing $\mu_f(A)$ is equivalent to solving the optimisation problem $(\mathcal{I}|(\Phi^{-1}(A)),\hat{f})$. Therefore, by Theorem~\ref{thm:mat_greedy}, the result follows.
\end{proof}

\begin{lemma}\label{lem:mu_of_join_of_layers}
    Let $X\in\mathcal{L}$, and let $\zero=H_m\lessdot\dots\lessdot H_0=X$ be a maximal chain with associated layering $(L_1,\dots,L_m)$. 
    Let $f:\mA(\mL)\to \RR_{\geq 0}$.
    Suppose that for any $j,k\in[m]$ with $j\leq k$ we have that
    \begin{enumerate}
        \item if $a,a'\in L_j$, then $f(a)=f(a')$, and
        \item if $a\in L_j$ and $a'\in L_k$, then $f(a)\geq f(a')$.
    \end{enumerate}
    Let $a_j\in L_j$ for each $j\in[m]$, and let $S\subseteq[m]$. Then we have 
        $\displaystyle \mu_f\left(\bigvee_{i\in S}a_i\right)=\sum_{i\in S}f(a_i).$
\end{lemma}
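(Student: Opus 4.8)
The plan is to reduce the statement to the greedy algorithm (Lemma~\ref{lem:greedy_alg}) applied to the element $Y := \bigvee_{i \in S} a_i$. The point is that the hypotheses on $f$ force the atoms $\{a_i : i \in S\}$ to be, up to the total preorder given by the layer indices, a valid output of Algorithm~\ref{alg:mu} on $Y$, so the sum $\sum_{i \in S} f(a_i)$ must equal $\mu_f(Y)$.

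First I would fix a maximal chain from $\zero$ to $Y$ that refines the filtration of $Y$ by the $H_k$'s; concretely, by Lemma~\ref{lem:layers}, $\mA(Y)$ meets exactly $\textup{h}(Y) = |S|$ of the layers $L_1,\dots,L_m$ — and by construction these are precisely the layers indexed by $S$, since $a_i \in L_i$ for $i \in S$. So writing $S = \{i_1 < \cdots < i_s\}$ with $s = |S| = \textup{h}(Y)$, the elements $Y \wedge H_{i_j - 1}$ (together with $\zero$ and $Y$) give a maximal chain in $[\zero, Y]$ whose $j$-th layer is $\mA(Y) \cap L_{i_j}$, and $a_{i_j}$ lies in that layer.

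Next I would run Algorithm~\ref{alg:mu} on $Y$. At each step the algorithm picks an atom of $Y$ not below the span of the previously chosen atoms, with minimal $f$-value. By hypothesis (1), all atoms of $Y$ in a common layer $L_{i_j}$ have the same $f$-value, and by hypothesis (2), atoms in an earlier layer have $f$-value at least that of atoms in a later layer; moreover any subset of $\mA(Y)$ that is independent and of size $|S|$ must meet each of the $|S|$ layers $\{L_{i_j}\}_{j}$ exactly once (again by Lemma~\ref{lem:layers}, since a smaller number of layers could not span an element of height $|S|$, as each layer contributes at most one new dimension to any chain). Hence there is an admissible run of the algorithm that selects one atom from each layer $L_{i_j}$, in order of decreasing layer index (i.e.\ the smallest $f$-values, from the highest-indexed layer, first), and the resulting sum is $\sum_{j=1}^{s} f(a_{i_j}') = \sum_{j=1}^{s} f(a_{i_j}) = \sum_{i \in S} f(a_i)$, using hypothesis (1) to replace each chosen atom's value by $f(a_{i_j})$. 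By Lemma~\ref{lem:greedy_alg}, this common value is $\mu_f(Y)$, which is the claim.

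The main obstacle is the combinatorial bookkeeping needed to justify that \emph{every} maximal independent flag inside $Y$ distributes one atom per relevant layer, and that the greedy choices respect the layer ordering: one has to invoke Lemma~\ref{lem:layers} carefully — applied to $Y$ and also to the partial joins produced during the algorithm — to guarantee that after choosing atoms from the $k$ highest-indexed relevant layers, the next admissible atom of minimal weight can again be taken from the next layer down, rather than from an already-used layer. Once this is set up, the equality $\mu_f(Y) = \sum_{i \in S} f(a_i)$ is immediate from Lemma~\ref{lem:greedy_alg}.
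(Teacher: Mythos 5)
Your overall strategy---reduce the claim to Lemma~\ref{lem:greedy_alg} by exhibiting an admissible run of Algorithm~\ref{alg:mu} that picks one atom per relevant layer---is the same as the paper's, and the conclusion is correct. However, the proposal contains a genuinely false intermediate assertion that you rely on, namely that ``any subset of $\mA(Y)$ that is independent and of size $|S|$ must meet each of the $|S|$ layers $\{L_{i_j}\}_j$ exactly once,'' justified by the remark that ``a smaller number of layers could not span an element of height $|S|$.'' This is wrong outside the Boolean case. Take $\mL = \mL(\F_2^3)$ with chain $\zero \lessdot \langle e_1\rangle \lessdot \langle e_1,e_2\rangle \lessdot \F_2^3$, so $L_1 = \{\langle e_3\rangle, \langle e_1+e_3\rangle, \langle e_2+e_3\rangle, \langle e_1+e_2+e_3\rangle\}$ and $L_2 = \{\langle e_2\rangle, \langle e_1+e_2\rangle\}$. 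With $S = \{1,2\}$, $a_1 = \langle e_3\rangle$, $a_2 = \langle e_2\rangle$, we get $Y = \langle e_2,e_3\rangle$. The set $\{\langle e_3\rangle, \langle e_2+e_3\rangle\}$ is an independent pair of atoms of $Y$ that spans $Y$, yet both atoms lie in $L_1$ and neither lies in $L_2$; so atoms from a single layer can span an element of height $2$, contradicting your claim.

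Despite this, the lemma's conclusion holds because such a basis is never $\mu_f$-optimal: by hypothesis~(2), $f$-values are weakly \emph{decreasing} in layer index, so the doubled-up layer has $f$-value at least as large as the omitted one. The correct way to make your greedy argument go through is to show, using Lemma~\ref{lem:layers} applied to partial joins, that after the algorithm has selected atoms from the $k-1$ highest-indexed layers $L_{i_s},\dots,L_{i_{s-k+2}}$, the join $V_{k-1}$ satisfies $\h(V_{k-1}) = k-1 = \h(Y \wedge H_{i_{s-k+2}-1})$, so $V_{k-1} = Y \wedge H_{i_{s-k+2}-1}$ and therefore $\mA(V_{k-1})$ absorbs \emph{all} atoms of $Y$ lying in those high layers. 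Consequently, every remaining atom of $Y$ is in one of the lower-indexed layers $L_{i_1},\dots,L_{i_{s-k+1}}$, among which the minimal $f$-value occurs in $L_{i_{s-k+1}}$; this makes the choice of one atom from each layer, in decreasing index order, admissible for Algorithm~\ref{alg:mu}. (Alternatively, one can avoid the greedy algorithm entirely: show that for any basis $\beta$ of $Y$, if its atoms lie in layers $p_1 \le \cdots \le p_s$, then a height count against $H_{p_k-1}$ forces $p_k \le i_k$ for all $k$, whence $\sum_{b\in\beta} f(b) \ge \sum_{i\in S} f(a_i)$; combined with Lemma~\ref{lem:mu_submod}, this yields the equality directly.)
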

\begin{proof}
    By Properties 1 and 2 (of the lemma statement), there exists a partition of $S$ of the form $S_1\dot\cup\cdots\dot\cup S_\ell$ such that $f(a_i)=f(a_j)$ if $i,j\in S_k$ and $f(a_i)>f(a_j)$ if $i\in S_k$ and $j\in S_{k+1}$ for all possible $i,j,k$. Also, for any $i\in S$ such that $f(a_i)<f(a_{i-1})$ (i.e., there is a strict inequality), we obtain
    \begin{equation}\label{eq:mu_sum_layers}
        A\wedge H_i=\bigvee\{a\in\mathcal{A}(A):a\in L_j\textup{ for }j\geq i\}=\bigvee\{a\in\mathcal{A}(A):f(a)\leq f(a_i)\}.
    \end{equation}

    By Lemma~\ref{lem:indep_layers_2_properties}, $A=\bigvee_{i\in S}a_i$ intersects precisely $|S|$ layers, so for each $a\in\mathcal{A}(A)$ there exists $i\in S$ such that $a\in L_i$ (and so $f(a)=f(a_i)$, by Propoerty 1). We now apply Algorithm~\ref{alg:mu} to the atoms of $A$. The first $|S_1|$ atoms of $A$ are chosen from layers $L_i$ such that $i\in S_1$. By (\ref{eq:mu_sum_layers}), we deduce that the next atom must be chosen from $L_i$ such that $i\notin S_1$. We deduce that the next $|S_2|$ atoms of $A$ are chosen from layers $L_i$ such that $i\in S_2$. This process thus repeats until we have chosen $|S|$ independent atoms. This gives a sum of evaluations of $f$, which by Lemma~\ref{lem:greedy_alg}, equals $\mu_f(A)$. By Property 1 and the construction of the $S_i$, we deduce that this sum equals $\sum_{i\in S}f(a_i)$.
\end{proof}

We now define a class of functions
, which we call weakly decomposable. Such functions are fundamental to the main result of this paper: in Section~\ref{sec:main}, we require $\mu_f$ to be weakly decomposable on certain intervals as part of the axiomatisation of cyclic flats.

\begin{definition}
    Let $g:\mathcal{L}\to\mathbb{R}_{\geq 0}$ be a function, and let $X\in\mathcal{L}$. We say that $g$ is \emph{weakly decomposable} on $[\zero,X]$ if for all $Y\in[\zero,X]$, there exists $Y^c\in\mathbf{C}(Y)$ such that 
    \[
        g(X)=g(Y)+g(X\wedge Y^c).
    \]
\end{definition}

\begin{remark}
    If $\mathcal{L}$ is a Boolean lattice, then $\mu_f$ is a valuation, which implies that it is weakly decomposable.
\end{remark}

The following is an explicit example of a weakly decomposable function on the subspace lattice of $\mathbb{F}_2^4$.

\begin{example}
    Suppose that $\mathcal{L}$ is the subspace lattice of $\mathbb{F}_q^4$. Let $e_1,e_2,e_3,e_4$ be the standard basis vectors. Define the following vector spaces
    $$H_j=\bigoplus_{i=1}^j\langle e_i\rangle\quad\textup{for }j=1,2,3.$$
    Consider the function $g:\mathcal{L}\to\mathbb{Z}$ defined by
    $g(X)=4\cdot\dim(X)-\sum_{i=1}^3\dim(X\cap H_i).$
    By inspection, we can conclude that $g$ is not a valuation, but that $g$ is weakly decomposable on $[0,\mathbb{F}_q^4]$. For instance, consider
    $g(\mathbb{F}_q^4)=4\cdot 4-3-2-1=10$ and $g(\langle e_1+e_2,e_3+e_4\rangle)=4\cdot 2-1-1=6.$
    The vector space $\langle e_1,e_3\rangle$ is a complement of $\langle e_1+e_2,e_3+e_4\rangle$ such that 
    $g(\langle e_1,e_3\rangle)=4\cdot 2-2-1-1=4,$
    which gives $g(\mathbb{F}_q^4)=g(\langle e_1+e_2,e_3+e_4\rangle)+g(\langle e_1,e_3\rangle).$
    However, $\langle e_2,e_4\rangle$ is another complement of $\langle e_1+e_2,e_3+e_4\rangle$, but
    $g(\langle e_2,e_4\rangle)=4\cdot 2-1-1=6,$
    from which we obtain
    $g(\mathbb{F}_q^4)=10<12=g(\langle e_1+e_2,e_3+e_4\rangle)+g(\langle e_2,e_4\rangle).$
\end{example}

The following result on weakly decomposable functions will be used in the proof of Proposition~\ref{cor:mu_is_increasing}, for which we omit the simple proof.

\begin{lemma}\label{lem:qmBA}
    Let $g:\mathcal{L}\to\mathbb{R}_{\geq 0}$ be a function, and let $X \in \mathcal{L}$.  
    If $g$ is weakly decomposable on $[\zero,X]$, then $g(X) \geq g(Y)$ for all $Y \in [\zero,X]$.  
\end{lemma}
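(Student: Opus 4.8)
The plan is to unwind the definition of quasi-modularity directly; this is essentially a one-line argument. Fix an arbitrary $Y \in [\zero, X]$. By the hypothesis that $g$ is quasi-modular on $[\zero, X]$, there exists a complement $Y^c \in \textup{\textbf{C}}(Y)$ for which
\[
g(X) = g(Y) + g(X \wedge Y^c).
\]
The key (and only) observation is that $g$ takes values in $\mathbb{R}_{\geq 0}$, so in particular $g(X \wedge Y^c) \geq 0$. Substituting this into the displayed identity yields $g(X) \geq g(Y)$. Since $Y$ was arbitrary in $[\zero, X]$, the claim follows.

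There is no genuine obstacle here: the statement is an immediate consequence of non-negativity of $g$ combined with the defining equation of quasi-modularity. The only thing worth stating explicitly is that the decomposition identity $g(X) = g(Y) + g(X \wedge Y^c)$ is guaranteed for \emph{some} complement $Y^c$ of $Y$ (not necessarily all), which is all that is needed — we simply invoke it for the one that the definition provides. I would write the proof in two sentences and move on.
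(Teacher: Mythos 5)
Your proof is correct and is exactly the paper's argument: invoke the defining identity $g(X)=g(Y)+g(X\wedge Y^c)$ for the complement $Y^c$ that quasi-modularity supplies, then drop the non-negative term. Nothing to add.
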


\begin{lemma}\label{lem:mu_is_q-mod_on_cf}
    Let $X \in \mathcal{L}$ and let $C = \textup{cyc}(X)$. 
    Then the function $\mu_r$ is weakly decomposable on $[\zero, X\wedge C^c]$ for any $C^c \in \mathbf{C}(C)$.
\end{lemma}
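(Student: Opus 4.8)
\emph{Plan.} The idea is to make visible the layered structure implicit in the proof of Lemma~\ref{lem:cyc_rank_identity}. Write $C := \cyc(X)$, fix $C^c \in \cC(C)$, and set $W := X \wedge C^c$; since $C \le X$, the modular law gives $C \dot\vee W = X$, so $\h(X) = \h(C) + \h(W)$ and we put $k := \h(W)$. Applying Lemma~\ref{lem:cyc_rank_identity} with $A = X$ yields $r(X) = r(C) + \mu_r(W)$, but --- crucially --- its proof also produces a maximal chain $C = H_k \lessdot \cdots \lessdot H_0 = X$ along which $r(H_{i-1}) - r(H_i) = r(a)$ for \emph{every} atom $a \in \mA(H_{i-1}) \setminus \mA(H_i)$. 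I would extend this to a maximal chain from $\zero$ and consider its associated layering $(L_1, \dots)$, recording two facts: (i) since $L_i = \mA(H_{i-1}) \setminus \mA(H_i)$, all atoms of $L_i$ share a common rank $r_i := r(H_{i-1}) - r(H_i)$ for $i \in [k]$, whence telescoping along the chain gives $\mu_r(W) = r(X) - r(C) = \sum_{i=1}^k r_i$; and (ii) atoms lying in distinct layers of a maximal chain in a modular lattice are independent (a short induction using $a \le H_{i-1}$ for $a \in L_i$), while the layers below $C$ consist entirely of atoms $\le C$ and hence are disjoint from $\mA(W)$ because $W \le C^c$. Combining (ii) with Lemma~\ref{lem:layers}, $\mA(W)$ meets exactly $L_1, \dots, L_k$, so I may fix $b_i \in \mA(W) \cap L_i$ for each $i \in [k]$, and then $W = \dot\bigvee_{i=1}^k b_i$.

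Given $Y \in [\zero, W]$, Lemma~\ref{lem:layers} says $\mA(Y)$ meets exactly $\h(Y)$ layers, which by (ii) form a subset $I_Y \subseteq [k]$. Choosing $y_i \in \mA(Y) \cap L_i$ for $i \in I_Y$, independence of atoms from distinct layers forces $\{y_i : i \in I_Y\}$ to be a basis of $Y$, so $\mu_r(Y) \le \sum_{i \in I_Y} r_i$. Setting $T := \dot\bigvee_{i \in [k] \setminus I_Y} b_i$, the same independence shows $\h(T) = k - \h(Y)$, $Y \vee T = W$, and $Y \wedge T = \zero$ (so $T$ is a relative complement of $Y$ in $[\zero, W]$), and $\{b_i : i \notin I_Y\} \in \mathcal{B}(T)$ gives $\mu_r(T) \le \sum_{i \notin I_Y} r_i$. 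Adding the two estimates, $\mu_r(Y) + \mu_r(T) \le \sum_{i=1}^k r_i = \mu_r(W)$; since $W = Y \dot\vee T$, Lemma~\ref{lem:mu_submod} supplies the reverse inequality, so $\mu_r(W) = \mu_r(Y) + \mu_r(T)$. Finally I would upgrade $T$ to an element of $\cC(Y)$: Corollary~\ref{cor:extend_to_complement} gives $Y^c \in \cC(Y)$ with $T \le Y^c$, and Proposition~\ref{prop:compdecomp} (using $Y \le W$) gives $W = Y \dot\vee (Y^c \wedge W)$, so $\h(Y^c \wedge W) = \h(W) - \h(Y) = \h(T)$ and therefore $T = W \wedge Y^c$. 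This yields $\mu_r(W) = \mu_r(Y) + \mu_r(W \wedge Y^c)$, i.e. quasi-modularity of $\mu_r$ on $[\zero, W]$.

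The step I expect to be the main obstacle is fact (i): one might first try to prove the lemma by running the greedy Algorithm~\ref{alg:mu} and ``greedily completing'' a minimum-weight basis of $Y$ to one of $W$, but in a general weighted matroid a minimum-weight basis of a subflat need not extend to a minimum-weight basis of the whole, so this fails. What rescues the argument is that the cyclicity of $\cyc(X)$ rigidly forces all atoms of a single layer of the distinguished chain to have equal rank, which makes $\mu_r$ additive across layers on $[\zero, W]$. I want to stress that the proof does \emph{not} need the layers to be ordered monotonically by their values $r_i$ --- in general they are not --- so Lemma~\ref{lem:mu_of_join_of_layers} is not directly applicable here; the two routine verifications remaining are the independence of atoms from distinct layers of a maximal chain and the disjointness of $\mA(W)$ from the layers sitting below $C$.
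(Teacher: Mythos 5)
Your proof is correct, and it takes a genuinely different route from the paper's. Both proofs start identically: extract from Lemma~\ref{lem:cyc_rank_identity} a maximal chain $C = H_k \lessdot \cdots \lessdot H_0 = X$ with layers $L_i$, observe that $r$ is constant on each $L_i$ (with common value $r_i := r(H_{i-1})-r(H_i)$), and use Lemma~\ref{lem:layers} to locate any $Y \leq W := X \wedge C^c$ inside $\h(Y)$ of the layers $L_1,\dots,L_k$. The paper then tries to establish the monotonicity $r_i \geq r_{i+1}$ across layers and invokes Lemma~\ref{lem:mu_of_join_of_layers} to compute $\mu_r$ on $Y$ and on a complementary join exactly. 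Your argument deliberately avoids this: it only needs the \emph{upper bounds} $\mu_r(Y) \leq \sum_{i \in I_Y} r_i$ and $\mu_r(T) \leq \sum_{i \notin I_Y} r_i$ (from exhibiting explicit bases), together with the \emph{lower bound} $\mu_r(Y)+\mu_r(T) \geq \mu_r(W) = \sum_{i=1}^k r_i$ from Lemma~\ref{lem:mu_submod}, so the three collapse to equality. Your remark that monotonicity is not automatic is correct and is in fact a real issue: the displayed inequality chain in the paper's proof does not parse (its second equality would force $r(H_i) = r(H_{i+1})$), and the chain from Lemma~\ref{lem:cyc_rank_identity} genuinely need not be layer-monotone — e.g., on the Boolean lattice of $\{1,2\}$ with $r(\{1\})=1$, $r(\{2\})=2$, $r(\{1,2\})=3$ and $\cyc(\{1,2\}) = \zero$, the chain $\zero \lessdot \{2\} \lessdot \{1,2\}$ has $r_1 = 1 < 2 = r_2$, yet satisfies all the properties Lemma~\ref{lem:cyc_rank_identity} promises. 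Thus Lemma~\ref{lem:mu_of_join_of_layers} is not directly applicable without further argument (such as re-ordering/re-choosing the chain via Algorithm~\ref{alg:mu}), and your squeeze argument buys a cleaner path that dispenses with the monotonicity step altogether. The two supporting facts you flag as ``routine'' — independence of atoms from distinct layers and disjointness of $\mA(W)$ from the sub-$C$ layers — do check out: the former is an immediate consequence of Lemma~\ref{lem:layers} applied to the join, and the latter from $W \wedge C = \zero$.
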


\begin{proof}
    If $X=\cyc(X)$, then the statement holds trivially, so suppose that $X$ is not cyclic and hence $m=\len([\cyc(X),X])>0$. 
    Let $C^c\in\mathbf{C}(C)$. By Lemma~\ref{lem:cyc_rank_identity}, we have that
        $r(X)-r(\cyc(X))=\mu_r(X\wedge C^c).$
    By Lemma~\ref{lem:max_chain_ranks}, there exists a chain $C=H_m\lessdot\cdots\lessdot H_0=X$ and an associated layering 
    $(L_1,\dots,L_m)$ such that for all $i\in[m]$ and all $a_i\in L_i$, we have 
    $\sum_{i=1}^mr(a_i) = \mu_r(X\wedge C^c)$ and $r(H_{i-1})-r(H_i)=r(a_i)>0$.
    In particular, $r$ is constant on each $L_i$. 
    Let $A\leq X\wedge C^c$. By Lemma~\ref{lem:indep_layers_2_properties}, there exists $S\subseteq[m]$ and $b_i\in L_i$ for each $i\in S$ such that $A=\bigvee_{i\in S}b_i$ and $\{b_i:i\in S\}\in\mathcal{B}(A)$. Moreover, we have that $B=\bigvee_{i\in[m]\setminus S}a_i$ is a relative complement of $A$ in $[\zero,X\wedge C^c]$ and $\{a_i:i\in [m]\setminus S\}\in\mathcal{B}(B)$. By Lemma \ref{cor:extend_to_complement}, $B$ extends to a complement $A$. Since $r$ is constant on each layer, we have $\mu_r(A) \leq \sum_{i\in S}r(b_i)=\sum_{i\in S}r(a_i)$ and 
    $\mu_r(B)\leq \sum_{i\in [m] \backslash S}r(b_i)=\sum_{i\in i\in [m] \backslash S}r(a_i)$. Therefore, by Lemma~\ref{lem:mu_submod}, we have
    $$\mu_r(X\wedge C^c)\leq\mu_r(A)+\mu_r(B)\leq\sum_{i\in S}r(a_i)+\sum_{i\in[m]\setminus S}r(a_i)=\sum_{i\in[m]}r(a_i)=\mu_r(X\wedge C^c).$$
    The result follows.
    \end{proof}

The remainder of this section focuses on establishing results about $\mu_f$, which we apply in Section~\ref{sec:main}.

\begin{lemma}\label{lem:weak_val_mat_bases}
    Let $X\in\mathcal{L}$. The function $\mu_f$ is weakly decomposable on $[\zero,X]$ if and only if every flat of $M|\Phi^{-1}(X)$ contains a basis that is contained in an $\hat{f}$-minimal basis of $M|\Phi^{-1}(X)$.
\end{lemma}
\begin{proof}
    For any $A \in [\zero,X]$, $\mu_f(A)=\min\{\hat{f}(B):B\textup{ is a basis of }
    M|\phi^{-1}(A)\}$.
    We have that $\mu_f$ is weakly decomposable on $[\zero,X]$ if for every $Y \in[\zero,X]$, there exists $Z \in [\zero, X]$ such that $X=Y\dot\vee Z$ and
    $\mu_f(X)=\hat{f}(B_X) = \hat{f}(B_Y)+\hat{f}(B_Z)$ for some 
    $\hat{f}$-minimal bases $B_X$, $B_Y$, and $B_Z$ of $M|(\Phi^{-1}(X))$, $M|(\Phi^{-1}(Y))$, and $M|(\Phi^{-1}(Z))$, respectively, 
in which case we have that $B_Y\cup B_Z$ is an $\hat{f}$-minimal basis of $M|(\Phi^{-1}(X))$. 
\end{proof}

\begin{lemma}\label{lem:q-mod_implies_mu-support}
    Let \(X\in\mathcal{L}\), and suppose that \(\mu_f\) is weakly decomposable on \([\zero,X]\). If \(\textup{h}(X) = m\), then there exists a maximal chain $\zero = H_m \lessdot H_{m-1} \lessdot \dots \lessdot H_0 = X$
    and an associated layering \(L_1, \dots, L_m\) such that:  
    \begin{enumerate}
        \item For any \(k\in[m]\) and all \(a, a' \in L_k\), we have \(f(a) = f(a')\).  
        \item For any \(k \in [m-1]\), if \(a \in L_k\) and \(a' \in L_{k+1}\), then \(f(a) \geq f(a')\).  
    \end{enumerate}
\end{lemma}

\begin{proof}
    Let $M'=M|(\Phi^{-1}(X))$. By Lemma~\ref{lem:weak_val_mat_bases}, that $\mu_f$ is weakly decomposable on $[\zero, X]$ is equivalent to the statement that for any $F\in\mathcal{F}$ contained in $\Phi^{-1}(X)$, an $\hat{f}$-minimal basis of $M'|F$ is contained in an $\hat{f}$-minimal basis of $M'$. Using the greedy algorithm, choose a basis $\{b_1,b_2,\dots,b_m\}$ of $M'$ such that $\hat{f}(b_i)\leq\hat{f}(b_{j})$ for $i\leq j$. Generate a chain of flats $\varnothing=F_m\subset F_{m-1}\subset\cdots\subset F_0=\Phi^{-1}(X)$ with $F_i=\cl(\{b_1,\dots,b_{m-i}\}), 0\leq i\leq m-1$. For any $i\in[m]$, any basis $B$ of $M'|F_i$, and any $a\in X$, the set $B\cup\{a\}$ is in $\mathcal{I}$ if and only if $a\notin F_i$. By our use of the greedy algorithm, we thus deduce that $\hat{f}(a')\leq\hat{f}(a)$ for any $a'\in F_i$ and $a\notin F_i$. By setting $H_k=\Phi(F_k)$ for each $k\in[m]$, Property 2 of the lemma statement follows.

    By an elementary result of matroid theory, any basis of $M'$ that contain a basis $J$ of $M'|F_i$ is of the form $J\cup K$ for some basis $K$ of $M'/F_i$. Therefore, for any given basis $J$ of $M'|F_i$, an $\hat{f}$-minimal basis of $M'$ containing $J$ must be of the form $J\cup K$ where $K$ is an $\hat{f}$-minimal basis of $M'/F_i$.
    
    We now show Property 1. Suppose, towards a contradiction, that there exist $a,a'\in F_i\setminus F_{i+1}$ such that $\hat{f}(a)<\hat{f}(a')$. For any basis $I$ of $M'|F_{i+1}$, we have that $I\cup \{a\}$ and $I\cup \{a'\}$ are both independent. By Property 2 of this lemma, we deduce that $a'$ is not contained in an $\hat{f}$-optimal basis $J$ of $M'|F_i$. Therefore, $a'$ is not contained in an $\hat{f}$-minimal basis of $M'$, which we observed to have the form $J\cup K$ for some $\hat{f}$-minimal basis $K$ of $M'/F_i$. By the definition of $\hat{f}$, it follows that $\cl(\{a'\})$ does not contain a basis that is contained in an $\hat{f}$-optimal basis of $M'$, which yields a contradiction by Lemma~\ref{lem:weak_val_mat_bases}. The result follows.
\end{proof}

\begin{corollary}\label{cor:mu_q-mod_gives_additive}
    Let $X\in\mathcal{L}$, and let $\mu_f$ be weakly decomposable on $[\zero,X]$. 
    Then there exists a maximal chain $\zero=H_m\lessdot\dots\lessdot H_0=X$ with associated layering $L_1,\dots,L_m$ such that for any $A \in [\zero ,X]$, for any
    $a_j\in L_j$ and subset $S \subseteq [m]$ such that $A=\bigvee_{j \in S} a_j$ we have $\mu_f(A)=\sum_{i\in S}f(a_i)$.
\end{corollary}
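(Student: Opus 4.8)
The plan is to deduce this corollary by stitching together three results already established: Lemma~\ref{lem:q-mod_implies_mu-support} provides the chain, Lemma~\ref{lem:layers} identifies, for a given $A$, the index set $S$ together with suitable representatives, and Lemma~\ref{lem:mu_of_join_of_layers} evaluates $\mu_f$ on the resulting join.

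First I would apply Lemma~\ref{lem:q-mod_implies_mu-support} to $X$: since $\mu_f$ is quasi-modular on $[\zero,X]$ and $\h(X)=m$, it yields a maximal chain $\zero=H_m\lessdot\cdots\lessdot H_0=X$ with associated layering $L_1,\dots,L_m$ on which $f$ is constant on each layer and satisfies $f(a)\geq f(a')$ for $a\in L_k$, $a'\in L_{k+1}$. Chaining this inequality over consecutive layers gives $f(a)\geq f(a')$ whenever $a\in L_j$, $a'\in L_k$ with $j\leq k$; hence the pair consisting of this chain and this $f$ satisfies hypotheses (1) and (2) of Lemma~\ref{lem:mu_of_join_of_layers}. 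This is the chain asserted in the statement, chosen once and for all.

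Next, fix $A\in[\zero,X]$. By Lemma~\ref{lem:layers}, $\mA(A)$ meets exactly $\h(A)$ of the layers $L_1,\dots,L_m$; let $S\subseteq[m]$ be the set of those indices, so $|S|=\h(A)$, and for each $j\in S$ pick $a_j\in\mA(A)\cap L_j$. I would then record the small fact that atoms chosen from distinct layers are independent: if $j_1<\cdots<j_t$ and $b_i\in L_{j_i}$, then $b_2,\dots,b_t\leq H_{j_1}$ while $b_1\nleq H_{j_1}$, so $\h(\bigvee_i b_i)=1+\h(\bigvee_{i\geq 2}b_i)$, and induction gives $\h(\bigvee_i b_i)=t$. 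Applied to $\{a_j:j\in S\}$ this gives $\h(\bigvee_{j\in S}a_j)=|S|=\h(A)$; since $\bigvee_{j\in S}a_j\leq A$ and $\mL$ has finite height, we conclude $A=\bigvee_{j\in S}a_j$ (extending, if desired, $\{a_j:j\in S\}$ to a full system of representatives $a_j\in L_j$, $j\in[m]$, so that Lemma~\ref{lem:mu_of_join_of_layers} can be quoted verbatim).

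Finally, Lemma~\ref{lem:mu_of_join_of_layers}, applied with the chain and $f$ fixed in the first step and with this $S$, gives $\mu_f(\bigvee_{i\in S}a_i)=\sum_{i\in S}f(a_i)$, i.e. $\mu_f(A)=\sum_{i\in S}f(a_i)$, which is exactly the claim. The only step requiring a little care is the passage from ``$\mA(A)$ meets exactly $\h(A)$ layers'' to ``$A$ is the join of one atom from each of those layers''; this relies on the cross-layer independence sketched above (already used implicitly in the proof of Lemma~\ref{lem:mu_is_q-mod_on_cf}) together with the elementary fact that $Y\leq A$ with $\h(Y)=\h(A)$ forces $Y=A$ in a lattice of finite height. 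No deeper obstacle is anticipated, as the corollary is essentially a repackaging of the three cited lemmas.
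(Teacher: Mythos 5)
Your proof is correct and follows essentially the same route as the paper's: invoke Lemma~\ref{lem:q-mod_implies_mu-support} to produce the layered chain, Lemma~\ref{lem:layers} to locate the index set $S$, and Lemma~\ref{lem:mu_of_join_of_layers} to evaluate $\mu_f$ on the resulting join. You are in fact slightly more careful than the paper, which passes from ``$\mA(A)$ meets exactly $\h(A)$ layers'' directly to ``$A=\bigvee_{j\in S}a_j$''; your explicit cross-layer independence argument (each representative from a smaller-index layer lies outside $H_{j_1}$ while all higher-index representatives lie inside it, so the heights add, and then $\h(\bigvee_{j\in S}a_j)=\h(A)$ with $\bigvee_{j\in S}a_j\leq A$ forces equality) fills that small gap cleanly, and your parenthetical note about extending to a full system of layer representatives before quoting Lemma~\ref{lem:mu_of_join_of_layers} verbatim is a correct observation.
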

\begin{proof}
    We apply Algorithm~\ref{alg:mu} to construct a maximal chain
    $\zero=H_m\lessdot\dots\lessdot H_0=X$ and its associated layering $L_1,\dots,L_m$. 
    By Lemma~\ref{lem:q-mod_implies_mu-support}, Properties 1 and 2 of Lemma~\ref{lem:q-mod_implies_mu-support} hold. Therefore, the conditions of Lemma~\ref{lem:mu_of_join_of_layers} are satisfied on $[\zero, X]$. By Lemma~\ref{lem:indep_layers_2_properties}, there exists a subset $S \subseteq [m]$ such that $A=\bigvee_{j \in S} a_j$, where $|S|=\h(A)$ and $a_j \in L_j$ for each $j$. It is clear that the conditions of Lemma~\ref{lem:mu_of_join_of_layers} are thus also satisfied on $[\zero, A]$, which means that $\mu_f(A)=\sum_{j\in S}f(a_j)$. By Property 1 of Lemma~\ref{lem:q-mod_implies_mu-support}, $f$ is constant on each layer
    and so $\mu_f(A)$ is independent of our choice of $a_j\in L_j$ satisfying $A=\bigvee_{j\in S}a_j$.
\end{proof}

The following result highlights an important feature of a weakly decomposable function.

\begin{proposition}\label{cor:mu_is_increasing}
    Let $X \in \mL$. 
    If $\mu_f$ is weakly decomposable on $[\zero,X]$, then $\mu_f$ is weakly decomposable on $[\zero,A]$
    for every $A \in [\zero,X]$. In particular, $\mu_f$ is increasing on $[\zero,X]$.
\end{proposition}
\begin{proof}
    By Corollary~\ref{cor:mu_q-mod_gives_additive}, there exists a maximal chain $\zero\lessdot H_m\lessdot\cdots H_0=X$ with the associated layering $L_1,\dots,L_m$ such that for $a_j\in L_j$ and $S\subseteq[m]$ satisfying $A=\bigvee_{j\in S}a_j$, we have $\mu_f(A)=\sum_{j\in S}f(a_j)$.
    Choose some $S\subseteq[m]$ and $a_j \in L_j$ for each $j \in S$ such that $A=\bigvee_{j\in S}a_j$.
    Let $Y\leq A$. By Lemma~\ref{lem:indep_layers_2_properties}, there exists $T\subseteq S$ and $a_j'\in L_j$ such that $Y=\bigvee_{j\in T}a_j'$. Moreover, $Y$ is a relative complement of $Z=\bigvee_{j\in S\setminus T}a_j$ in $[\zero, A]$. By Corollary~\ref{cor:mu_q-mod_gives_additive} and Property 1 of Lemma~\ref{lem:q-mod_implies_mu-support}, we obtain
    $$\mu_f(Y)+\mu_f(Z)=\sum_{j\in T}f(a')+\sum_{j\in S\setminus T}f(a_j)=\sum_{j\in S}f(a_j)=\mu_f(A).$$
    The result follows.
\end{proof}

\section{Characterizing the Cyclic Flats of an \texorpdfstring{$\mathcal{L}$}{L}-Polymatroid}\label{sec:main}

In this section, we provide a set of axioms that characterize when a lattice $\mathcal{Z}$ is the lattice of cyclic flats of an $\mL$-polymatroid. The sets of axioms given in \cite{AlfByr22} and \cite{Csirm20} can be recovered from the axioms in Definition \ref{def:axioms} by setting \( f \) as a function taking values in \( \{0,1\} \) and by considering \( \mathcal{L} \) to be a subspace lattice and a Boolean lattice, respectively. Several of the technical results in this section are generalisations of results from \cite[Section 4]{Csirm20}.

\begin{definition}
    A \emph{weighted lattice} is a pair $(\mathcal{Z},\lambda)$ where $\mathcal{Z}$ is a lattice and $\lambda$ is a non-negative real-valued function on $\mathcal{Z}$.
\end{definition}

\begin{notation}
    For the remainder of this section, we let $(\mathcal{Z},\vee_\mathcal{Z},\wedge_\mathcal{Z})$ denote an arbitrary lattice such that $\mathcal{Z}$ is a subset of $\mathcal{L}$. 
    Note that the meet and join operations in $\mathcal{Z}$ may differ from those in $\mathcal{L}$. Complements are always taken in the lattice $\mathcal{L}$. Explicitly, for $Z\in\mathcal{Z}$, we have $Z\in\mathcal{L}$, and for any $Z^c\in\mathbf{C}(Z)$ we have $Z\wedge Z^c=\zero$ and $Z\vee Z^c=\one$. Furthermore, $Z^c$ is not necessarily an element of $\mathcal{Z}$.

    Recall that $f:\mA(\mathcal{L})\rightarrow\mathbb{R}_{\geq0}$ is a fixed non-negative function. For the remainder we let $\lambda:\mathcal{Z}\rightarrow\mathbb{R}_{\geq0}$ denote a fixed non-negative function. We write $(\mathcal{Z},\lambda,f)$ to denote a lattice $\mathcal{Z}$ (as a subset of $\mathcal{L}$) endowed with the functions $\lambda$ and $f$.
\end{notation}

\begin{definition}
    For any $X\in\mathcal{L}$, $Z\in\mathcal{Z}$, and $Z^c \in \cC(Z)$, we define:
    \begin{enumerate}
        \item $\rho_{(\lambda,f)}(X;Z^c,Z) = \lambda(Z) + \mu_f(X\wedge Z^c)$,
        \item $\rho_{(\lambda,f)}(X) = \min\{\rho_{(\lambda,f)}(X;Z^c,Z) : Z\in\mathcal{Z}, Z^c\in \mathbf{C}(Z;X)\},$
        \item $\mathcal{Z}(X) = \{Z\in\mathcal{Z} : \rho_{(\lambda,f)}(X;Z^c,Z) = \rho_{(\lambda,f)}(X) \text{ for some } Z^c \in \mathbf{C}(Z;X) \}$.
    \end{enumerate}
\end{definition}

The following axioms apply to an arbitrary weighted lattice $(\mathcal{Z},\lambda)$, where $\mathcal{Z}$ is a subset of $\mathcal{L}$, and the atoms of $\mathcal{L}$ are assigned weights from the (arbitrary) function $f$.
We will see that these axioms precisely characterize when $(\mathcal{L},\rho_{(\lambda,f)})$ is an $\mL$-polymatroid with cyclic flats $\mathcal{Z}$. 

\begin{definition}\label{def:axioms}
    We call the following axioms the \emph{cyclic flat axioms}.
\begin{description}
    \item[\namedlabel{z1}{{\rm (Z1)}}] For all $X\in\mathcal{L}$, there exists $Z\in\mathcal{Z}(X)$ such that:
    \begin{description}
        \item[\namedlabel{z1i}{{\rm (i)}}] For all $Z^c\in\mathbf{C}(Z;X)$, we have $\rho_{(\lambda,f)}(X;Z^c,Z)=\rho_{(\lambda,f)}(X).$
        \item[\namedlabel{z1ii}{{\rm (ii)}}] The function $\mu_f$ is weakly decomposable on $[\zero,X\wedge Z^c]$ for all $Z^c\in\mathbf{C}(Z;X)$.
    \end{description}
    \item[\namedlabel{z2}{{\rm (Z2)}}] For any $Z_1,Z_2\in\mathcal{Z}$, any $(Z_1\wedge_\mathcal{Z} Z_2)^c\in \mathbf{C}(Z_1\wedge_\mathcal{Z}Z_2)$, and any $A\in\mathcal{L}$, we have 
    \[
    \lambda(Z_1)+\lambda(Z_2)\geq\lambda(Z_1\wedge_\mathcal{Z} Z_2)+\lambda(Z_1\vee_\mathcal{Z} Z_2)+\mu_f(A\wedge Z_1\wedge Z_2\wedge(Z_1\wedge_\mathcal{Z} Z_2)^c).
    \]
    \item[\namedlabel{z3}{{\rm (Z3)}}] For any $Z_1, Z_2\in\mathcal{Z}$, if $Z_1\leq Z_2$, then for any $Z_1^c\in\mathbf{C}(Z_1)$, we have
    \[
    \lambda(Z_2)-\lambda(Z_1)\leq\mu_f (Z_2\wedge Z_1^c).
    \]
    \item [\namedlabel{z3*}{{\rm (Z4)}}] For any $Z_1,Z_2\in\mathcal{Z}$, if $Z_1<Z_2$, then for any $H\in\mathcal{H}(Z_2)\cap[Z_1,Z_2]$ there exist $Z_1^c\in\mathbf{C}(Z_1)$ and $H^c\in\mathbf{C}(H)$ such that
    $$0<\lambda(Z_2)-\lambda(Z_1)<\mu_f(H\wedge Z_1^c)+\mu_f(Z_2\wedge H^c).$$
    \item[\namedlabel{z5}{{\rm (Z5)}}] $\lambda(0_\mathcal{Z})=0$.
    \item[\namedlabel{z6}{{\rm (Z6)}}] $\mu_f(a)>0$ for any $a\nleq0_\mathcal{Z}$.
\end{description}
\end{definition}

To prove that these cyclic flat axioms characterize all $(\mathcal{Z},\lambda,f)$ that coincide with the lattice of cyclic flats of an $\mL$-polymatroid $(\mathcal{L},\rho_{(\lambda,f)})$, we begin with Proposition~\ref{prop:q-poly_sats_axioms}, which is the easier of the two directions of implication in the main result of this paper, Theorem~\ref{th:main}.

\begin{proposition}\label{prop:q-poly_sats_axioms}
    Let $\mathcal{Z}$ be the lattice of cyclic flats of the $\mL$-polymatroid $\mP=(\mathcal{L},r)$. Then $(\mZ,r,r)$ satisfies the cyclic flat axioms \ref{z1}--\ref{z6}.
\end{proposition}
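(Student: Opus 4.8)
The plan is to verify each of the six cyclic flat axioms \ref{z1}--\ref{z6} for the triple $(\mathcal{Z},r,r)$, where $\mathcal{Z}$ is the lattice of cyclic flats of $\mP$, using the structural results of Section \ref{sec:cyclicflat}. The central observation is Theorem \ref{thm:convol_identity}, which already tells us that $\rho_{(r,r)}=r$ on all of $\mathcal{L}$, and moreover that the minimum defining $\rho_{(r,r)}(X)$ is attained at $Z=\cyc(\cl(X))$ for \emph{every} choice of $Z^c\in\textup{\textbf{C}}(Z;X)$. This is precisely what \ref{z1i} demands, and the witness $Z$ for \ref{z1} will always be taken to be $\cyc(\cl(X))$. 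For \ref{z1ii}, I would invoke Lemma \ref{lem:mu_is_q-mod_on_cf}: with $C=\cyc(X)$ (note $\cyc(\cl(X))=\cyc(X)$ since $\cl$ and $\cyc$ commute appropriately here, which needs a small remark), that lemma says exactly that $\mu_r$ is quasi-modular on $[\zero,X\wedge C^c]$ for any $C^c\in\textup{\textbf{C}}(C)$, hence a fortiori for $C^c\in\textup{\textbf{C}}(C;X)$. So \ref{z1} reduces to citing Theorem \ref{thm:convol_identity} and Lemma \ref{lem:mu_is_q-mod_on_cf}.

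For \ref{z3} and \ref{z3*}: if $Z_1\leq Z_2$ are cyclic flats, then by \ref{r3} applied to the direct join decomposition $Z_2=(Z_2\wedge Z_1)\dot\vee(Z_2\wedge Z_1^c)=Z_1\dot\vee(Z_2\wedge Z_1^c)$ (using Proposition \ref{prop:compdecomp}), we get $r(Z_2)-r(Z_1)\le r(Z_2\wedge Z_1^c)\le\mu_r(Z_2\wedge Z_1^c)$, which is \ref{z3}. The strict inequality $\lambda(Z_1)<\lambda(Z_2)$ in \ref{z3*} for $Z_1<Z_2$ follows because a flat is never cyclic-equal to a strictly larger flat without a rank increase: if $r(Z_1)=r(Z_2)$ then $Z_1$ fails to be a flat (any atom below $Z_2$ but not $Z_1$ witnesses $\cl(Z_1)\supsetneq Z_1$). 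The dichotomy \ref{z3i}/\ref{z3ii} is the delicate part: it is the translation, via the identity $\mu_r(Z_2\wedge Z_1^c)\ge r(Z_2)-r(Z_1)$, of the definition of cyclicity of $Z_2$ (Definition \ref{def:cyclic_space}) applied to hyperplanes $H\in\mH(Z_2)$. I would argue: take $H\in\mH(Z_2)$ with $Z_1^c\in\textup{\textbf{C}}(Z_1;H)$; if $r(Z_2)=r(H)$ one expects to land in case (ii) via additivity of $\mu_r$ on layers (Lemma \ref{lem:mu_of_join_of_layers}), and if $r(Z_2)-r(H)<r(a)$ for a suitable atom $a$, one compares with case (i) using that $\mu_r$ strictly exceeds the rank drop. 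This is where Lemma \ref{lem:hyper_cycle} and careful bookkeeping with complements decomposing both $Z_1$ and $H$ (Corollary \ref{cor:decomp_both}) will be needed.

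For \ref{z2}: this is the submodularity-type statement for $\lambda=r$ on $\mathcal{Z}$, corrected by a $\mu_r$-term. I would start from $r(Z_1)+r(Z_2)\ge r(Z_1\vee Z_2)+r(Z_1\wedge Z_2)$ by \ref{r3}, then replace $r(Z_1\vee Z_2)$ by $r(Z_1\vee_\mathcal{Z}Z_2)=r(\cl(Z_1\vee Z_2))=r(Z_1\vee Z_2)$ (closure does not change rank) and relate $r(Z_1\wedge Z_2)$ to $r(Z_1\wedge_\mathcal{Z}Z_2)=r(\cyc(Z_1\wedge Z_2))$ via Lemma \ref{lem:cyc_rank_identity}, which gives $r(Z_1\wedge Z_2)=r(\cyc(Z_1\wedge Z_2))+\mu_r(\cyc(Z_1\wedge Z_2)^c\wedge(Z_1\wedge Z_2))$; the extra $\mu_f(A\wedge Z_1\wedge Z_2\wedge(Z_1\wedge_\mathcal{Z}Z_2)^c)$ term for arbitrary $A$ is then handled by Corollary \ref{cor:mu_is_increasing} (monotonicity of $\mu_r$ on the relevant interval), since $A\wedge Z_1\wedge Z_2\wedge(\cdot)^c\le(Z_1\wedge Z_2)\wedge(\cdot)^c$. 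Finally \ref{z5} is immediate: $0_\mathcal{Z}=\cyc(\cl(\zero))$ has rank $r(\zero)=0$; and \ref{z6} says $\mu_r(a)=r(a)>0$ for atoms $a\not\le 0_\mathcal{Z}$, which holds because $0_\mathcal{Z}=\cl(\zero)$ is exactly the join of the rank-zero atoms (loops), so $a\not\le0_\mathcal{Z}$ forces $r(a)>0$. The main obstacle I anticipate is \ref{z3*}(i)/(ii): unwinding the cyclicity condition on hyperplanes of $Z_2$ into the stated inequality between $\mu_r$-values requires matching up the layer structure relative to $Z_1^c$ with the atoms witnessing cyclicity, and ensuring the complement $Z_1^c$ can be chosen to decompose $H$ as well — this is exactly the kind of complement-juggling that Corollary \ref{cor:decomp_both} and Lemma \ref{lem:mu_of_join_of_layers} are designed to support, but assembling them correctly will be the crux.
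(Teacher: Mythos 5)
Your overall plan mirrors the paper's proof quite closely --- citing Theorem~\ref{thm:convol_identity} for \ref{z1}\ref{z1i}, Lemma~\ref{lem:mu_is_q-mod_on_cf} for \ref{z1}\ref{z1ii}, submodularity and Lemma~\ref{lem:cyc_rank_identity} for \ref{z2}, the up-projection for \ref{z3}, and the closure/loops observations for \ref{z5}--\ref{z6}. Two points deserve attention.

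\textbf{A genuine error in the treatment of \ref{z1}\ref{z1ii}.} The parenthetical claim that $\cyc(\cl(X))=\cyc(X)$ is false in general. For instance, in the rank-one uniform matroid on a two-element set, if $X$ is one of the two singletons then $\cyc(X)=\zero$ while $\cl(X)$ is the whole ground set, which is cyclic, so $\cyc(\cl(X))$ is the top element. In general one only has $\cyc(X)\leq\cyc(\cl(X))$. The argument is still salvageable, but via a different route: apply Lemma~\ref{lem:mu_is_q-mod_on_cf} to $\cl(X)$ rather than to $X$, obtaining quasi-modularity of $\mu_r$ on $[\zero,\cl(X)\wedge Z^c]$ with $Z=\cyc(\cl(X))$, and then pass to the subinterval $[\zero, X\wedge Z^c]$ by Corollary~\ref{cor:mu_is_increasing}. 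The paper's terse citation of Lemma~\ref{lem:mu_is_q-mod_on_cf} is best read in this way.

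\textbf{The treatment of \ref{z3*} is left as a sketch.} You correctly identify this as the crux, but the case split you outline (``if $r(Z_2)=r(H)$ land in (ii); otherwise compare with (i)'') does not match the logical structure of the axiom, which is: for a fixed $Z_1^c$, either (i) holds globally, or (ii) must hold for \emph{every} relevant $H$. The paper instead argues by contradiction: negating both (i) and (ii) for some $H\in\mH(Z_2)$ with $Z_1^c\in\cC(Z_1;H)$, it combines the bound $r(H)\leq r(Z_1)+\mu_r(H\wedge Z_1^c)$ from Theorem~\ref{thm:convol_identity} with the two negated inequalities to get $r(Z_2)-r(H)\geq\mu_r(Z_2\wedge H^c)$ for every $H^c\in\cC(H)$, contradicting cyclicity of $Z_2$. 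That argument is cleaner and avoids the layer bookkeeping you anticipate; I recommend adopting it.

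Everything else --- including your remark that the arbitrary $A$ in \ref{z2} is absorbed by monotonicity of $\mu_r$ on the interval $[\zero, Z_1\wedge Z_2\wedge (Z_1\wedge_\mZ Z_2)^c]$, a step the paper leaves implicit --- is in line with the published proof.
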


\begin{proof}
Let $\mathcal{Z}\subseteq\mathcal{L}$ be the lattice of cyclic flats of $(\mathcal{L},r)$. 
By Theorem~\ref{thm:convol_identity}, \ref{z1}\ref{z1i} holds. By Lemma~\ref{lem:mu_is_q-mod_on_cf}, \ref{z1}\ref{z1ii} holds.
By the submodularity of $r$, for any $Z_1,Z_2\in\mathcal{Z}$, we have that
    $$r(Z_1)+r(Z_2)\geq r(Z_1\vee Z_2)+r(Z_1\wedge Z_2).$$
    Since $Z_1\vee_\mathcal{Z}Z_2=\textup{cl}(Z_1\vee Z_2)$, it follows that $r(Z_1\vee Z_2)=r(Z_1\vee_\mathcal{Z}Z_2)$. Since $Z_1\wedge_\mathcal{Z}Z_2=\textup{cyc}(Z_1\wedge Z_2)$, by Lemma~\ref{lem:cyc_rank_identity}, we have that
    $$r(Z_1\wedge Z_2)=r(Z_1\wedge_\mathcal{Z}Z_2)+\mu_r(Z_1\wedge Z_2\wedge (Z_1\wedge_\mathcal{Z}Z_2)^c)$$ 
    for any complement $(Z_1\wedge_\mathcal{Z}Z_2)^c \in \mathbf{C}(Z_1\wedge_\mathcal{Z}Z_2)$. Axiom \ref{z2} follows.

    Let $Z_1,Z_2 \in \mathcal{Z}$ with $Z_1 \leq Z_2$. Then for any $Z_1^c \in \mathbf{C}(Z_1)$, we have 
        $$r(Z_2)-r(Z_1) \leq r(Z_1^c \wedge Z_2) \leq \mu_r(Z_1^c \wedge Z_2)$$
        where both inequalities follow from the submodularity of $r$. Axiom \ref{z3} follows.

        Let $Z_1,Z_2 \in \mathcal{Z}$ with $Z_1 < Z_2$. Clearly $r(Z_1)<r(Z_2)$ since both are flats of $(\mathcal{L},r)$.
        Suppose that \ref{z3*} doesn't hold for $Z_1,Z_2$; that is, suppose there is some $H\in\mathcal{H}(Z_2)\cap[Z_1,Z_2]$ such that
        \begin{align}\label{ineq:z4_proof}
            r(Z_2)-r(Z_1)&\geq\mu_r(H\wedge Z_1^c)+\mu_r(Z_2\wedge H^c) 
        \end{align}
        for any $Z_1^c\in\mathbf{C}(Z_1)$, and any $H^c\in\mathbf{C}(H)$. Now $H\wedge Z_1=Z_1=Z_2\wedge Z_1$.
        Moreover, using Proposition~\ref{prop:compdecomp}, we obtain, for arbitrary $Z_1^c$,
        $$H=(H\wedge Z_1)\dot\vee(H\wedge Z_1^c)=(Z_2\wedge Z_1)\dot\vee(H\wedge Z_1^c)\quad\textup{and}\quad Z_2=(Z_2\wedge Z_1)\dot\vee(Z_2\wedge Z_1^c).$$
        Using the modularity of $\mathcal{L}$, and the fact that $H\lessdot Z_2$, we thus obtain
        $$1=\textup{h}(Z_2)-\textup{h}(H)=\textup{h}(Z_2\wedge Z_1)+\textup{h}(Z_2\wedge Z_1^c)-\textup{h}(Z_2\wedge Z_1)-\textup{h}(H\wedge Z_1^c),$$ 
        so $\textup{h}(Z_2\wedge Z_1^c)-\textup{h}(H\wedge Z_1^c)=1$ and thus $H\wedge Z_1^c\lessdot Z_2\wedge Z_1^c.$ From $H\lessdot Z_2$, we get $\zero\lessdot Z_2\wedge H^c$. Also, it is clear that $Z_2\wedge H^c\wedge H\wedge Z_1^c=\zero$. Therefore, for any $H^c\in\mathbf{C}(H)$ such that $H^c\leq Z_1^c$, we have
        \begin{equation}\label{eq:Z2Z1H^c}
            Z_2\wedge Z_1^c=(H\wedge Z_1^c)\dot\vee(Z_2\wedge H^c).
        \end{equation}
        Since $Z_1\leq H$, we have that $Z_1\wedge H^c=\zero$ for any $H^c\in\mathbf{C}$. By Corollary~\ref{cor:extend_to_complement}, we thus have for any $H^c\in\mathbf{C}$, that there exists $Z_1^c\in\mathbf{C}(Z_1)$ such that $H^c\leq Z_1^c$. Therefore, we deduce that for any $H^c\in\mathbf{C}(H)$, there exists $Z_1^c\in\mathbf{C}(Z_1)$ such that (\ref{eq:Z2Z1H^c}) holds.

        Since $Z_2\wedge H^c\leq Z_1^c$ and $Z_2\wedge H^c$ is an atom, we have $Z_2\wedge H^c\nleq Z_1$ and thus $Z_2\wedge H^c\nleq 0_\mathcal{Z}=\textup{cl}(\zero)$. Therefore, using the fact that $Z_2\wedge H^c$ is an atom, we have
        $$\mu_r(Z_2\wedge H^c)=r(Z_2\wedge H^c)>0.$$ 
        By Theorem~\ref{thm:convol_identity}, we have $r(H)\leq r(Z_1)+\mu_r(H\wedge Z_1^c)$. We thus obtain, with use of (\ref{ineq:z4_proof}),
        \begin{align}
            r(Z_2)-r(H)&\geq r(Z_2)-r(Z_1)-\mu_r(H\wedge Z_1^c)\nonumber\\
            &\geq\mu_r(Z_2\wedge Z_1^c)+\mu_r(Z_2\wedge H^c)-\mu_r(Z_2\wedge Z_1^c)\nonumber\\
            &=\mu_r(Z_2\wedge H^c).\label{eq:Z4_proof}
        \end{align}
        Recall that $H^c\in\mathbf{C}(H)$ was chosen arbitrarily. Therefore, by definition, if $r(H)<r(Z_2)$, then (\ref{eq:Z4_proof}) gives us that $Z_2$ is not cyclic.

        Suppose then that $r(H)=r(Z_2)$. By (\ref{eq:Z4_proof}), we then have that 
        $0=\mu_r(Z_2\wedge H^c)=r(Z_2\wedge H^c),$
        which contradicts the earlier assertion that $r(Z_2\wedge H^c)>0$. Therefore, \ref{z3*} must hold. Axiom \ref{z5} follows immediately from the fact that $\textup{cl}(\zero)=0_\mathcal{Z}$. Axiom \ref{z6} follows immediately from the fact that if $a \nleq \textup{cl}(\zero)$, then $r(a)>0$.
\end{proof}

We state in Theorem~\ref{th:main} the main result of this section, which is the converse of Proposition~\ref{prop:q-poly_sats_axioms}.

\begin{theorem}\label{th:main}
    If $(\mathcal{Z},\lambda,f)$ satisfies the cyclic flat axioms \ref{z1}--\ref{z6}, then $\mathcal{Z}$ is the lattice of cyclic flats of the $\mathcal{L}$-polymatroid $(\mathcal{L},\rho_{(\lambda,f)})$.
\end{theorem}

We begin with a lemma.

\begin{lemma}\label{lem:two_cases}
    Let $X,Y \in \mathcal{L}$ and $a \in \mA(\mathcal{L}) \backslash \mA(Y)$. Let $X^c \in \mathbf{C}(X;Y)$. Then there exists $X^{\hat{c}} \in \mathbf{C}(X; Y \dot\vee a)$ such that $Y \wedge X^c = Y \wedge X^{\hat{c}}$, and one of the following is true:
    \begin{enumerate}
        \item $(Y \dot\vee a) \wedge X^{\hat{c}} = Y \wedge X^c$; or
        \item $a \leq X^{\hat{c}}$, in particular $(Y \dot\vee a) \wedge X^{\hat{c}} = (Y \wedge X^c) \dot\vee a$.
    \end{enumerate}
\end{lemma}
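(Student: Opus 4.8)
The plan is to start with the given complement $X^c \in \cC(X;Y)$ and modify it, if necessary, so that it also decomposes $Y\dot\vee a$, keeping its behaviour on $Y$ unchanged. First I would record the defining properties: $X \wedge X^c = \zero$, $X\dot\vee X^c = \one$, and $(X^c\wedge Y)\dot\vee(X\wedge Y) = Y$ (decomposition of $Y$). I want to show there is $X^{\hat c}\in\cC(X;Y\dot\vee a)$ with $Y\wedge X^{\hat c} = Y\wedge X^c$, and either $a\nleq X$ forces a split into the two stated cases. Consider the element $X\vee a$. Since $a$ is an atom not below $Y$, either $a\leq X$ (impossible once we note $a\notin\mA(Y)$ does not exclude $a\leq X$ — I must handle this carefully; if $a\leq X$ then $(Y\dot\vee a)\wedge X^c \leq (Y\vee X)\wedge X^c$ and one shows $X^c$ itself already works, giving case 1), or $a\nleq X$, in which case $X\lessdot X\vee a$.

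In the generic case $a\nleq X$, the key step is to look at the interval $[\zero,(Y\dot\vee a)\wedge(X\vee a)]$ and choose a relative complement of $X\wedge(Y\dot\vee a)$ inside it that extends $Y\wedge X^c$; more precisely, I would use Corollary~\ref{cor:extend_to_complement} and the relative-complement machinery of Lemma~\ref{lem:compl_partition} to build, starting from $Y\wedge X^c$, a relative complement $W$ of $X\wedge(Y\dot\vee a)$ in $[\zero,Y\dot\vee a]$ with $Y\wedge X^c\leq W$. Then $W$ is either equal to $Y\wedge X^c$ (when $a$ is ``absorbed'' by $X$-side, i.e. $a\leq X\vee(Y\wedge X^c)$ in the relevant interval), giving case 1, or $W = (Y\wedge X^c)\dot\vee a'$ for some atom $a'$; using the isomorphism theorem for modular lattices (Theorem~\ref{thm:isom_thm_lattices}) on $[Y, Y\dot\vee a]\cong[Y\wedge(\cdot),\cdot]$ one identifies $a'$ with $a$ up to the action of the complement, and by replacing $a$ with this $a'$ (which spans the same cover over $Y$) we may take $a\leq X^{\hat c}$, giving case 2. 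Finally, extend $W$ to a complement $X^{\hat c}$ of $X$ in all of $\mL$ via Corollary~\ref{cor:extend_to_complement}, and verify $X^{\hat c}\in\cC(X;Y\dot\vee a)$ by checking $(X^{\hat c}\wedge(Y\dot\vee a))\dot\vee(X\wedge(Y\dot\vee a)) = Y\dot\vee a$ — which holds because $W$ was chosen as a relative complement in $[\zero,Y\dot\vee a]$ and Proposition~\ref{prop:compdecomp}-type reasoning transfers this to the full complement. The condition $Y\wedge X^{\hat c} = Y\wedge X^c$ follows since $W\cap Y = Y\wedge X^c$ by construction and meeting with $Y$ commutes appropriately by modularity.

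The main obstacle I anticipate is the bookkeeping around the atom $a$: since complements are not unique, the ``same'' cover $[Y, Y\dot\vee a]$ can be realised by different atoms over $Y$, and I must be careful that replacing $a$ by another atom $a'$ with $Y\dot\vee a = Y\dot\vee a'$ is legitimate in the statement — here it is, because the conclusion only refers to $Y\dot\vee a$, not to $a$ itself, except in case 2 where the phrase ``$a\leq X^{\hat c}$'' should be read as: there is a complement containing an atom that completes $Y$ to $Y\dot\vee a$. I would make this precise by first fixing the cover $Y\lessdot Y\dot\vee a$ and only at the end selecting the atom inside $X^{\hat c}\wedge(Y\dot\vee a)$ that realises it. The dichotomy between cases 1 and 2 is then exactly whether $\h((Y\dot\vee a)\wedge X^{\hat c}) = \h(Y\wedge X^c)$ or $\h(Y\wedge X^c)+1$, which is forced by modularity of $\mL$ once $X^{\hat c}$ decomposes $Y\dot\vee a$. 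A secondary technical point is ensuring $Y\wedge X^c$ can indeed be extended to a relative complement of $X\wedge(Y\dot\vee a)$ inside $[\zero, Y\dot\vee a]$; this is where I would invoke Corollary~\ref{cor:extend_to_complement} applied within the interval sublattice $[\zero, Y\dot\vee a]$, using that a modular complemented lattice is relatively complemented.
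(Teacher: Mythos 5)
Your overall strategy — build a relative complement $W$ of $X\wedge(Y\dot\vee a)$ in $[\zero,Y\dot\vee a]$ extending $Y\wedge X^c$, then lift $W$ to a complement $X^{\hat c}$ of $X$ and read off the two cases from $\h(W)$ — is a genuinely different and workable route from the paper's. The paper instead keeps the given $X^c$ fixed and simply examines the element $X\wedge\bigl((Y\wedge X^c)\dot\vee a\bigr)$: if it is $\zero$ it extends $(Y\wedge X^c)\dot\vee a$ to a complement of $X$ (case~2); if it is an atom $a'$ it shows $a'\nleq Y$, hence $Y\dot\vee a = Y\dot\vee a'$, and then a direct modularity computation shows $X^c$ already decomposes $Y\dot\vee a$ with $(Y\dot\vee a)\wedge X^c = Y\wedge X^c$ (case~1). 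Your case~1, and the verification that any extension of $W$ to a complement lies in $\cC(X;Y\dot\vee a)$ with $Y\wedge X^{\hat c}=Y\wedge X^c$, both go through by the height and disjointness arguments you sketch.

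However, there is a genuine gap in your case~2. When $\h(W)=\h(Y\wedge X^c)+1$ you only know $W=(Y\wedge X^c)\dot\vee a'$ for \emph{some} atom $a'\leq W$ with $Y\dot\vee a'=Y\dot\vee a$; nothing in your construction forces $a\leq W$. The appeal to the isomorphism theorem does not help: $[Y,Y\dot\vee a]\cong[Y\wedge X^{\hat c},(Y\dot\vee a)\wedge X^{\hat c}]$ is only a lattice isomorphism of the two covers and does not ``identify'' the ambient atoms $a$ and $a'$. Consequently, what you would actually prove is the weaker statement that there exists $a'$ with $Y\dot\vee a'=Y\dot\vee a$ and $a'\leq X^{\hat c}$, not $a\leq X^{\hat c}$ as the lemma asserts. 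To close the gap you need the one observation that the paper makes explicitly: in the height-up regime one has $X\wedge(Y\dot\vee a)=X\wedge Y$, and from this it follows that $X\wedge\bigl((Y\wedge X^c)\dot\vee a\bigr)=\zero$ (any atom of that meet would lie in $X\wedge Y\leq Y$, but by modularity $\bigl((Y\wedge X^c)\dot\vee a\bigr)\wedge Y = Y\wedge X^c$, which is disjoint from $X$). Then one may \emph{choose} $W:=(Y\wedge X^c)\dot\vee a$ as the relative complement, extend it via Corollary~\ref{cor:extend_to_complement}, and obtain $a\leq X^{\hat c}$ literally. Without this check, your dichotomy leaves open whether $(Y\wedge X^c)\dot\vee a$ is even a legitimate candidate for $W$.
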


\begin{proof}
    If $X\wedge((Y\wedge X^c)\dot\vee a)=\zero$, then by Lemma~\ref{lem:dir_join_assoc}, there exists $X^{\hat{c}}\in\mathbf{C}(X)$ such that $(Y\wedge X^c)\dot\vee a\leq X^{\hat{c}}$ (and thus $a\leq X^{\hat{c}}$). By the modularity of $\mathcal{L}$, we then have $Y\wedge X^c=Y\wedge X^{\hat{c}}$, and we deduce that $X^{\hat{c}}\in\mathbf{C}(X;Y\dot\vee a)$, so Property 2 of the lemma is satisfied.

    If instead $X\wedge((Y\wedge X^c)\dot\vee a)\neq\zero$, the modularity of $\mathcal{L}$ means $X\wedge((Y\wedge X^c)\dot\vee a)=a'$ for some $a'\in\mathcal{A}(X)$. Therefore, $a'\nleq Y\wedge X^c$, and so $Y\wedge X^c<(Y\wedge X^c)\dot\vee a'\leq(Y\wedge X^c)\dot\vee a$, which means $(Y\wedge X^c)\dot\vee a'=(Y\wedge X^c)\dot\vee a$ since $Y\wedge X^c\lessdot(Y\wedge X^c)\dot\vee a$. We thus deduce that $a'\nleq Y$ and so $Y\dot\vee a=Y\dot\vee a'$. This means that $(Y\dot\vee a)\wedge X=(Y\dot\vee a')\wedge X=(Y\wedge X)\dot\vee a'$. Observe, by the modularity of $\mathcal{L}$, that $\textup{h}(Y\dot\vee a)=\textup{h}(Y)+1$, $\textup{h}((Y\dot\vee a)\wedge X)=\textup{h}(Y\wedge X)+1$, and recall that $X^c\in\mathbf{C}(X;Y)$. We thus obtain
    $$\textup{h}(Y\dot\vee a)\geq\textup{h}((Y\dot\vee a)\wedge X)+\textup{h}((Y\dot\vee a)\wedge X^c)\geq 1+\textup{h}(Y\wedge X)+\textup{h}(Y\wedge X^c)=\textup{h}(Y\dot\vee a),$$
    which means that $\textup{h}((Y\dot\vee a)\wedge X^c)=\textup{h}(Y\wedge X^c)$ and thus $(Y\dot\vee a)\wedge X^c=Y\wedge X^c$. Therefore, we let $X^c=X^{\hat{c}}$ and Property 1 of the lemma is satisfied.
\end{proof}

Note, from Lemma~\ref{lem:two_cases}, it is clear that $X^{\hat{c}}\in\mathbf{C}(X;Y)\cap\mathbf{C}(X;Y\dot\vee a)$ since $X^c\in\mathbf{C}(X;Y)$ and $Y\wedge X^c=Y\wedge X^{\hat{c}}$.

In the following, Theorem~\ref{lem:q-poly_mu_ineq} is a technical result made in preparation for Theorem~\ref{thm:axioms_imply_q-poly}, in which we show that if axioms \ref{z1}, \ref{z2}, and \ref{z5} hold for $(\mZ, \lambda, f)$, then $\rho_{(\lambda,f)}$ is the rank function of an $\mL$-polymatroid.

\begin{theorem}\label{lem:q-poly_mu_ineq}
    Suppose that $(\mZ, \lambda, f)$ satisfies \ref{z1}. Let $X \in \mathcal{L}$ and let $a_1, a_2 \in \mA(\mathcal{L}) \backslash \mA(X)$ such that $\len([X,X\dot\vee a_1\dot\vee a_2])=2$. Let
    $Z_1 \in \mathcal{Z}(X \dot\vee a_1)$ and $Z_2 \in \mathcal{Z}(X \dot\vee a_2)$ both satisfy \ref{z1} for $X\dot\vee a_1$ and $X\dot\vee a_2$ respectively. 
    Let $Z_k^c \in \mathbf{C}(Z_k; X \dot\vee a_k)$ for $k = 1, 2$. We have
    \begin{equation}\label{eq:mu_q-poly_ineq}
        \mu_f((X \dot\vee a_1) \wedge Z_1^c) + \mu_f((X \dot\vee a_2) \wedge Z_2^c) \geq \mu_f(X \wedge (Z_1 \wedge Z_2)^c) + \mu_f((X \dot\vee a_1 \dot\vee a_2) \wedge (Z_1 \vee_\mathcal{Z} Z_2)^c)
    \end{equation}
    for some $(Z_1 \vee_\mathcal{Z} Z_2)^c \in \mathbf{C}(Z_1 \vee_\mathcal{Z} Z_2; X \dot\vee a_1 \dot\vee a_2)$ and some $(Z_1 \wedge Z_2)^c \in \mathbf{C}(Z_1 \wedge Z_2; X)$.
\end{theorem}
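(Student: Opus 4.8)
The plan is to work with the explicit maximal chains whose layerings realize the minima $\mu_f((X\dot\vee a_k)\wedge Z_k^c)$, supplied by the quasi-modularity hypothesis in \ref{z1}\ref{z1ii}, and to compare the two layerings on the common ground $X$. First I would apply Lemma~\ref{lem:two_cases} to each pair $(Z_k, a_k)$: starting from a complement $Z_k^c\in\textup{\textbf{C}}(Z_k;X\dot\vee a_k)$, I obtain a complement (still denoted $Z_k^c$ after adjustment) with $X\wedge Z_k^c$ fixed and with $(X\dot\vee a_k)\wedge Z_k^c$ equal either to $X\wedge Z_k^c$ (so $a_k$ is "absorbed") or to $(X\wedge Z_k^c)\dot\vee a_k$. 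In the absorbed case the left-hand side contribution of $Z_k$ is just $\mu_f(X\wedge Z_k^c)$; in the other case it is $\mu_f(X\wedge Z_k^c)+(\text{weight of }a_k)$, using that, by Corollary~\ref{cor:mu_q-mod_gives_additive} and the layering structure, $\mu_f$ is additive along the layers of the quasi-modular interval.

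The second step is to choose, via Corollary~\ref{cor:decomp_both}, a single complement of $Z_1\wedge Z_2$ (and similarly arrange a complement of $Z_1\vee Z_2$) that simultaneously decomposes $X$, $X\wedge Z_1^c$, and $X\wedge Z_2^c$, so that all the relevant meets with $X$ sit inside one compatible family of complements. Using modularity and the height identity $\textup{h}(A)+\textup{h}(B)=\textup{h}(A\vee B)+\textup{h}(A\wedge B)$, I would establish the lattice-theoretic inclusion/height relations among $X\wedge(Z_1\wedge Z_2)^c$, $X\wedge Z_1^c$, $X\wedge Z_2^c$, and $(X\dot\vee a_1\dot\vee a_2)\wedge(Z_1\vee Z_2)^c$ — essentially that the "new" part of $X\wedge Z_1^c$ relative to $X\wedge(Z_1\wedge Z_2)^c$ together with the analogous part for $Z_2$ covers, as a join, the complement-intersection against $Z_1\vee Z_2$. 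This is the bookkeeping that turns the desired inequality into a statement about $\mu_f$ of joins of atoms drawn from known layers.

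The third step is the submodularity-type estimate for $\mu_f$ itself. Having expressed each side as $\mu_f$ of an explicit join of atoms taken from the layerings of the two quasi-modular intervals, I would invoke Lemma~\ref{lem:mu_submod} (and, where the layerings are compatible, the additivity from Lemma~\ref{lem:mu_of_join_of_layers}) to bound the right-hand side: $\mu_f(X\wedge(Z_1\wedge Z_2)^c)$ is at most the sum of the layer-weights common to both sides, while $\mu_f((X\dot\vee a_1\dot\vee a_2)\wedge(Z_1\vee Z_2)^c)$ absorbs whatever is left, including the contributions of $a_1,a_2$ in the non-absorbed cases. Collecting terms layer by layer yields \eqref{eq:mu_q-poly_ineq}. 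One must also check the side conditions that the complements produced genuinely lie in $\textup{\textbf{C}}(Z_1\vee Z_2;X\dot\vee a_1\dot\vee a_2)$ and $\textup{\textbf{C}}(Z_1\wedge Z_2;X)$, which follows from Corollary~\ref{cor:extend_to_complement} and Proposition~\ref{prop:compdecomp} applied to the chains constructed above.

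**Main obstacle.** The hardest part will be the case analysis from Lemma~\ref{lem:two_cases}: there are four combinations according to whether each $a_k$ is absorbed, and in the "both non-absorbed" case one must verify that the two extra atoms $a_1,a_2$ do not interfere — i.e. that $a_1\dot\vee a_2$ genuinely contributes two independent layer slots on the $Z_1\vee Z_2$ side rather than collapsing. Keeping the complements mutually compatible across $X$, $X\dot\vee a_1$, $X\dot\vee a_2$, and $X\dot\vee a_1\dot\vee a_2$ simultaneously — so that the layer-weights on the two sides can be matched termwise — is the delicate point, and is exactly where the strength of \ref{z1}\ref{z1i} (independence of the value from the choice of complement) and \ref{z1ii} (quasi-modularity, hence the additive layer structure) gets used.
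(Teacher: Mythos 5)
Your plan correctly identifies the right ingredients (Lemma~\ref{lem:two_cases}, Corollary~\ref{cor:decomp_both}, Lemma~\ref{lem:mu_submod}, and the additive layer structure from quasi-modularity), and you correctly flag that keeping the complements mutually compatible is the delicate point. But the sketch as written has a structural gap that would derail it if you tried to carry it out, and it leaves the hard steps as stated intentions rather than verified facts.

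The main problem is the symmetry you impose at the outset. You propose applying Lemma~\ref{lem:two_cases} to both pairs $(Z_1,a_1)$ and $(Z_2,a_2)$ and then choosing a single complement of $Z_1\wedge Z_2$ (and similarly of $Z_1\vee Z_2$) decomposing all the relevant elements simultaneously. The paper's proof is deliberately asymmetric: Lemma~\ref{lem:two_cases} is applied only to $(Z_1,a_1)$ to fix $Z_1^c\in\cC(Z_1;X\dot\vee a_1)\cap\cC(Z_1;X)$, and the complement of $Z_1\vee Z_2$ is not built symmetrically from $Z_1^c$ and $Z_2^c$ at all. Instead one builds, step by step, a $Z_2^{\tilde{c}}\in\cC(Z_2;Z_1)\cap\cC(Z_2;X\wedge Z_1)$, sets $(Z_1\wedge Z_2)^c:=Z_1^c\dot\vee(Z_1\wedge Z_2^{\tilde{c}})$ (and checks this decomposes $X$), then constructs $(Z_1\vee Z_2)^c$ decomposing $X\dot\vee a_2$ through an auxiliary $Z_2^{c'}\in\cC(Z_2;Z_1)$ and an arbitrary $Z_1^{c'}\in\cC(Z_1;(X\dot\vee a_2)\wedge Z_2^{c'})$, and only \emph{then} applies Lemma~\ref{lem:two_cases} a second time, to $Z_1\vee Z_2$ and $a_1$, to extend to $X\dot\vee a_1\dot\vee a_2$. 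The four-way case split you anticipate (each $a_k$ absorbed or not) is not how the argument proceeds; the paper has a two-way split at the last extension step only, because the role of $a_2$ is handled through $(Z_1\vee Z_2)^c\in\cC(Z_1\vee Z_2;X\dot\vee a_2)$, not through a two-case dichotomy.

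Two further things your plan does not supply. First, in the non-absorbed branch you still need to know that $\mu_f((X\dot\vee a_1)\wedge Z_1^c)=\mu_f(X\wedge Z_1^c)+f(a_1)$; quasi-modularity only gives this for \emph{some} atom $a_1'$ with $X\dot\vee a_1'=X\dot\vee a_1$, and the paper explicitly argues that one may replace $a_1$ by $a_1'$ without loss (since the target inequality depends only on the coset $X\dot\vee a_1$). Second, the closing step is not generic submodularity: the paper uses the freedom in the choice of $Z_1^{c'}$ together with quasi-modularity of $\mu_f$ on $[\zero,(X\dot\vee a_2)\wedge Z_2^{c'}]$ to pick a $Z_1^{c'}$ for which the decomposition is \emph{exactly} additive, $\mu_f((X\dot\vee a_2)\wedge Z_1\wedge Z_2^{c'})+\mu_f((X\dot\vee a_2)\wedge Z_1^{c'}\wedge Z_2^{c'})=\mu_f((X\dot\vee a_2)\wedge Z_2^{c'})$. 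Without that optimal choice the termwise comparison you describe does not close. In short: your outline points at the right lemmas and the right obstacle, but the symmetric framing would have to be abandoned, and the two substantive steps just described need to be done, before this becomes a proof.
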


\begin{proof}    
    By assumption, for \( k = 1, 2 \), we have \( Z_k \in \mathcal{Z}(X \dot\vee a_k) \), and \ref{z1} holds for each. By \ref{z1}\ref{z1i}, this means that for $k=1,2$ we have
\[
\rho_{(\lambda, f)}(X \dot\vee a_k; Z_k^c, Z_k) = \rho_{(\lambda, f)}(X \dot\vee a_k)
\]
for all \( Z_k^c \in \textbf{C}(Z_k; X \dot\vee a_k) \). By \ref{z1}\ref{z1ii}, we have that for $k=1,2$, the function $\mu_f$ is weakly decomposable on $[\zero,(X\dot\vee a_k)\wedge Z_k^c]$ for any $Z_k^c\in\mathbf{C}(Z_k;X\dot\vee a_k)$. Moreover, by Proposition~\ref{cor:mu_is_increasing}, we thus have for $k=1,2$ that $\mu_f$ is increasing on $[\zero,(X\dot\vee a_k)\wedge Z_k^c]$ for any $Z_k^c\in\mathbf{C}(Z_k)$. Using Lemma~\ref{lem:two_cases}, we choose \( Z_1^c \in \mathbf{C}(Z_1; X \dot\vee a_1) \cap \mathbf{C}(Z_1; X) \) such that either
\begin{equation}\label{eq:Z_1^c_either_or}
    (X \dot\vee a_1) \wedge Z_1^c = X \wedge Z_1^c, \quad \text{or} \quad a_1 \leq Z_1^c.
\end{equation}

In the case of $a_1\leq Z_1^c$, we have that $X\wedge Z_1^c\lessdot(X\dot\vee a_1)\wedge Z_1^c$ by the modularity of $\mathcal{L}$. Since $\mu_f$ is weakly decomposable on the interval $[\zero,(X\dot\vee a_1)\wedge Z_1^c]$, we have
\begin{equation*}
    \mu_f((X\dot\vee a_1)\wedge Z_1^c)=\mu_f(X\wedge Z_1^c)+f(a_1')
\end{equation*} 
for some $a_1'\in\mathcal{A}((X\dot\vee a_1)\wedge Z_1^c)\setminus\mathcal{A}(X\wedge Z_1^c)$. It is clear that $X\dot\vee a_1=X\dot\vee a_1'$. We observe that if the inequality (\ref{eq:mu_q-poly_ineq}) holds true for some $a_1$ as described in the theorem statement, then it holds true for any $a''_1\in\mathcal{A}(\mathcal{L})$ satisfying $X \vee a_1=X \vee a''_1$. Therefore, in the case of $a_1\leq Z_1^c$, we will assume, without loss of generality, that
\begin{equation}\label{eq:a_1_fix}
    \mu_f((X\dot\vee a_1)\wedge Z_1^c)=\mu_f(X\wedge Z_1^c)+f(a_1).
\end{equation} 

In the following, we construct a complement of $Z_1\wedge Z_2$ that suits our purposes.
By Corollary~\ref{cor:decomp_both}, there exists \( Z_2^{\tilde{c}} \in \textbf{C}(Z_2; Z_1) \cap \textbf{C}(Z_2; X \wedge Z_1) \). Using the modular law on \( \mathcal{L} \), we obtain
\begin{align*}
    (Z_1^c \dot\vee (Z_1 \wedge Z_2^{\tilde{c}})) \wedge Z_1 \wedge Z_2
&= ((Z_1^c \dot\vee (Z_1 \wedge Z_2^{\tilde{c}})) \wedge Z_1) \wedge Z_2\\
&= ((Z_1^c\wedge Z_1)\vee (Z_1 \wedge Z_2^{\tilde{c}})) \wedge Z_2\\
&= (Z_1 \wedge Z_2^{\tilde{c}}) \wedge Z_2\\
&=\zero.
\end{align*}
By Corollary~\ref{cor:extend_to_complement}, this gives us that \( Z_1^c \dot\vee (Z_1 \wedge Z_2^{\tilde{c}}) \leq (Z_1 \wedge Z_2)^c \) for some \( (Z_1 \wedge Z_2)^c \in \textbf{C}(Z_1 \wedge Z_2) \). Therefore, using the modularity of $\mathcal{L}$, we deduce that the following are equivalent:
\begin{itemize}
    \item $Z_1^c \dot\vee (Z_1 \wedge Z_2^{\tilde{c}}) \in \textbf{C}(Z_1 \wedge Z_2)$
    \item $\text{h}((Z_1 \wedge Z_2)^c) - \text{h}(Z_1^c) = \text{h}(Z_1 \wedge Z_2^{\tilde{c}})$
    \item $\text{h}(Z_1) - \text{h}(Z_1 \wedge Z_2) = \text{h}(Z_1 \wedge Z_2^{\tilde{c}})$
    \item $Z_2^{\tilde{c}} \in \textbf{C}(Z_2; Z_1).$
\end{itemize} In particular, we have that \( Z_1^c \dot\vee (Z_1 \wedge Z_2^{\tilde{c}}) \in \textup{\textbf{C}}(Z_1 \wedge Z_2) \).
Since \( Z_1^c \in \mathbf{C}(Z_1; X) \) and \( Z_2^{\tilde{c}} \in \mathbf{C}(Z_2; X \wedge Z_1) \), we obtain the following:
\begin{align}\label{eq:split_w}
    X &= (X \wedge Z_1^c) \dot\vee (X \wedge Z_1) \nonumber \\
    &= (X \wedge Z_1^c) \dot\vee (X \wedge Z_1 \wedge Z_2^{\tilde{c}}) \dot\vee (X \wedge Z_1 \wedge Z_2) \nonumber \\
    &\leq (X \wedge (Z_1^c \dot\vee (Z_1 \wedge Z_2^{\tilde{c}}))) \dot\vee (X \wedge Z_1 \wedge Z_2)\quad\textup{(by Lemma~\ref{lem:gen_latt_distrib})} \nonumber \\
    &\leq X.
\end{align}
Clearly, we must have equality in (\ref{eq:split_w}). Given that \( Z_1^c \dot\vee (Z_1 \wedge Z_2^{\tilde{c}}) \in \textup{\textbf{C}}(Z_1 \wedge Z_2) \), we have thus shown \( Z_1^c \dot\vee (Z_1 \wedge Z_2^{\tilde{c}}) \in \textup{\textbf{C}}(Z_1 \wedge Z_2; X) \). We now have our desired complement
$$(Z_1\wedge Z_2)^c = Z_1^c\vee(Z_1\wedge Z_2^{\tilde{c}}).$$ 
Furthermore, from (\ref{eq:split_w}), we deduce that
\[
\textup{h}(X \wedge (Z_1^c \dot\vee (Z_1 \wedge Z_2^{\tilde{c}}))) = \textup{h}((X \wedge Z_1^c) \dot\vee (X \wedge Z_1 \wedge Z_2^{\tilde{c}})).
\]
From Lemma~\ref{lem:gen_latt_distrib}, we have that
$$X \wedge (Z_1^c \dot\vee (Z_1 \wedge Z_2^{\tilde{c}})) \geq (X \wedge Z_1^c) \dot\vee (X \wedge Z_1 \wedge Z_2^{\tilde{c}}).$$
We thus obtain
\[
X \wedge (Z_1^c \dot\vee (Z_1 \wedge Z_2^{\tilde{c}})) = (X \wedge Z_1^c) \dot\vee (X \wedge Z_1 \wedge Z_2^{\tilde{c}}),
\]
which, by Lemma~\ref{lem:mu_submod}, gives us
\begin{equation}\label{eq:mu_meet_ineq}
    \mu_f(X \wedge (Z_1^c \dot\vee (Z_1 \wedge Z_2^{\tilde{c}}))) \leq \mu_f(X \wedge Z_1^c) + \mu_f(X \wedge Z_1 \wedge Z_2^{\tilde{c}}).
\end{equation}

Note that, by definition, $Z_1\vee Z_2\leq Z_1\vee_\mathcal{Z}Z_2$. In the following, we will construct a complement of $Z_1\vee_\mathcal{Z}Z_2$ to fit our purposes. 
Since $(Z_1\wedge Z_2^{\tilde{c}})\vee Z_2\leq Z_1\vee_\mathcal{Z}Z_2$, we have that $((Z_1\wedge Z_2^{\tilde{c}})\vee Z_2)\wedge(Z_1\vee_\mathcal{Z}Z_2)^c=\zero$ for any $(Z_1\vee_\mathcal{Z}Z_2)^c\in\mathbf{C}(Z_1\vee_\mathcal{Z}Z_2)$. Therefore, by Lemma~\ref{lem:dir_join_assoc}, we have that 
$$Z_2\wedge ((Z_1\wedge Z_2^{\tilde{c}})\dot\vee(Z_1\vee_\mathcal{Z}Z_2)^c)=\zero$$
for any $(Z_1\vee_\mathcal{Z}Z_2)^c\in\mathbf{C}(Z_1\vee_\mathcal{Z}Z_2)$. By Corollary~\ref{cor:extend_to_complement}, for each $(Z_1\vee_\mathcal{Z}Z_2)^c\in\mathbf{C}(Z_1\vee_\mathcal{Z}Z_2)$ there exists $Z_2^{c'}\in\mathbf{C}(Z_2)$ such that $(Z_1\wedge Z_2^{\tilde{c}})\dot\vee(Z_1\vee_\mathcal{Z}Z_2)^c\leq Z_2^{c'}$. Therefore, by considering $(Z_1\vee_\mathcal{Z}Z_2)^c\in\mathbf{C}(Z_1\vee_\mathcal{Z}Z_2;X\dot\vee a_2)$, we can choose $Z_2^{c'}\in\mathbf{C}(Z_2)$ such that $Z_1\wedge Z_2^{\tilde{c}}\leq Z_2^{c'}$ and 
\begin{equation}
    X\dot\vee a_2=((X\dot\vee a_2)\wedge (Z_1\vee_\mathcal{Z}Z_2))\vee((X\dot\vee a_2)\wedge Z_2^{c'}).
\end{equation}
Furthermore, since $Z_2^{\tilde{c}}\in\mathbf{C}(Z_2;Z_1)$, we deduce from $Z_1\wedge Z_2^{\tilde{c}}\leq Z_2^{c'}$ that $Z_2^{c'}\in\mathbf{C}(Z_2;Z_1)$, from which we obtain (using the modularity of $\mathcal{L}$)
$$\textup{h}(Z_1\wedge Z_2^{\tilde{c}})=\textup{h}(Z_1\wedge Z_2^{c'}).$$
We thus deduce that $Z_1\wedge Z_2^{\tilde{c}}=Z_1\wedge Z_2^{c'}$. By our construction of $(Z_1\wedge Z_2)^c$, we then have
$$(Z_1\wedge Z_2)^c=Z_1^c\dot\vee(Z_1\wedge Z_2^{\hat{c}})=Z_1^c\dot\vee(Z_1\wedge Z_2^{c'}).$$
Moreover, by (\ref{eq:mu_meet_ineq}), we obtain
\begin{equation}\label{eq:mu_z2c'}
    \mu_f(X \wedge (Z_1^c \dot\vee (Z_1 \wedge Z_2^{c'}))) \leq \mu_f(X \wedge Z_1^c) + \mu_f(X \wedge Z_1 \wedge Z_2^{c'}).
\end{equation}

Now choose an arbitrary $Z_1^{c'}\in\mathbf{C}(Z_1;(X\dot\vee a_2)\wedge Z_2^{c'})$. We then obtain
\begin{align}
    X \dot\vee a_2 &= ((X \dot\vee a_2) \wedge (Z_1 \vee_\mathcal{Z} Z_2)) \vee ((X \dot\vee a_2) \wedge Z_2^{c'})\nonumber \\
    &= ((X \dot\vee a_2) \wedge (Z_1 \vee_\mathcal{Z} Z_2)) \vee ((X \dot\vee a_2)\wedge Z_1 \wedge Z_2^{c'}) \vee ((X \dot\vee a_2)\wedge Z_1^{c'} \wedge Z_2^{c'})\nonumber \\
    &= ((X \dot\vee a_2) \wedge (Z_1 \vee_\mathcal{Z} Z_2)) \vee ((X \dot\vee a_2)\wedge Z_1^{c'} \wedge Z_2^{c'}),\label{eq:max_Y_rel_comp}
\end{align}
where the equality in (\ref{eq:max_Y_rel_comp}) comes from $(X\dot\vee a_2)\wedge Z_1\wedge Z_2^{c'}\leq(X\dot\vee a_2)\wedge (Z_1\vee_\mathcal{Z}Z_2)$.

Let $Y\in\mathcal{L}$ be maximal in $[\zero,(X\dot\vee a_2)\wedge Z_1^{c'}\wedge Z_2^{c'}]$ such that
$$(X\dot\vee a_2)\wedge(Z_1\vee_\mathcal{Z}Z_2)\wedge Y=\zero.$$
If $X\dot\vee a_2>((X\dot\vee a_2)\wedge(Z_1\vee_\mathcal{Z}Z_2))\dot\vee Y$, then by (\ref{eq:max_Y_rel_comp}), there must exist $x\in\mathcal{A}((X\dot\vee a_2)\wedge Z_1^{c'}\wedge Z_2^{c'})$ such that $(((X\dot\vee a_2)\wedge(Z_1\vee_\mathcal{Z}Z_2))\dot\vee Y)\wedge x=\zero$. By Lemma~\ref{lem:dir_join_assoc}, we thus contradict the maximality of $Y$.
Therefore, we deduce that this maximality of $Y$ ensures
\begin{equation}\label{eq:Xa2Y}
    X\dot\vee a_2=((X\dot\vee a_2)\wedge(Z_1\vee_\mathcal{Z}Z_2))\dot\vee Y.
\end{equation}
Therefore, by Corollary~\ref{cor:extend_to_complement}, we can extend $Y$ to a complement of $Z_1\vee_\mathcal{Z}Z_2$. By (\ref{eq:Xa2Y}), we have that such a complement is an element of $\mathbf{C}(Z_1\vee_\mathcal{Z}Z_2;X\dot\vee a_2)$, from which it is clear that the meet of such a complement with $X\dot\vee a_2$ is equal to $Y$. In other words, for any choice of $Z_1^{c'}\in\mathbf{C}(Z_1;(X\dot\vee a_2)\wedge Z_2^{c'})$, there exists $(Z_1\vee_\mathcal{Z}Z_2)^c\in\mathbf{C}(Z_1\vee_\mathcal{Z}Z_2;X\dot\vee a_2)$ such that
\begin{equation}\label{ineq:z1z2z1^c}
    (X\dot\vee a_2)\wedge (Z_1\vee_\mathcal{Z}Z_2)^c\leq(X\dot\vee a_2)\wedge Z_1^{c'}\wedge Z_2^{c'}.
\end{equation}

Recall that our choice of $Z_1^{c'}\in\mathbf{C}(Z_1;(X\dot\vee a_2)\wedge Z_2^{c'})$ above was arbitrary. We will next proceed to fix a $Z_1^{c'}\in\mathbf{C}(Z_1;(X\dot\vee a_2)\wedge Z_2^{c'})$, after which we will fix a $(Z_1\vee_\mathcal{Z}Z_2)^c\in\mathbf{C}(Z_1\vee_\mathcal{Z}Z_2)$.
By \ref{z1}\ref{z1ii}, $\mu_f$ is weakly decomposable on the interval $[\zero,(X\dot\vee a_2)\wedge Z_2^{c'}]$. Therefore, there is a relative complement $U$ of $(X\dot\vee a_2)\wedge Z_1\wedge Z_2^{c'}$ in the interval $[\zero,(X\dot\vee a_2)\wedge Z_2^{c'}]$ such that 
\begin{equation}\label{eq:U_quasi-mod}
    \mu_f((X\dot\vee a_2)\wedge Z_2^{c'})=\mu_f((X\dot\vee a_2)\wedge Z_1\wedge Z_2^{c'})+\mu_f(U).
\end{equation}
Since $U\leq(X\dot\vee a_2)\wedge Z_2^{c'}$ and $U\wedge(X\dot\vee a_2)\wedge Z_1\wedge Z_2^{c'}=\zero$, we deduce that $U\wedge Z_1=\zero$. By Corollary~\ref{cor:extend_to_complement}, we can extend $U$ to some $Z_1^{c'}\in\mathbf{C}(Z_1)$, from which we deduce
\begin{align*}
    (X\dot\vee a_2)\wedge Z_2^{c'}&=((X\dot\vee a_2)\wedge Z_1\wedge Z_2^{c'})\dot\vee U\\
    &\leq ((X\dot\vee a_2)\wedge Z_1\wedge Z_2^{c'})\dot\vee((X\dot\vee a_2)\wedge Z_1^{c'}\wedge Z_2^{c'})\\
    &\leq (X\dot\vee a_2)\wedge Z_2^{c'},
\end{align*}
and hence we have equality. It follows that $\textup{h}(U)=\textup{h}((X\dot\vee a_2)\wedge Z_1^{c'}\wedge Z_2^{c'})$, and thus we deduce $U=(X\dot\vee a_2)\wedge Z_1^{c'}\wedge Z_2^{c'}$. Furthermore, by the above equalities, we obtain that $Z_1^{c'}\in\mathbf{C}(Z_1;(X\dot\vee a_2)\wedge Z_2^{c'})$. We now fix such a $Z_1^{c'}\in\mathbf{C}(Z_1;(X\dot\vee a_2)\wedge Z_2^{c'})$, which is to say, replacing $U$ with $(X\dot\vee a_2)\wedge Z_1^{c'}\wedge Z_2^{c'}$ in (\ref{eq:U_quasi-mod}), we get
\begin{equation}\label{eq:z1'_split}
    \mu_f((X\dot\vee a_2)\wedge Z_2^{c'})=\mu_f((X\dot\vee a_2)\wedge Z_1\wedge Z_2^{c'})+\mu_f((X\dot\vee a_2)\wedge Z_1^{c'}\wedge Z_2^{c'}).
\end{equation}
We now fix $(Z_1\vee_\mathcal{Z}Z_2)^c\in\mathbf{C}(Z_1\vee_\mathcal{Z}Z_2;X\dot\vee a_2)$ such that (\ref{ineq:z1z2z1^c}) is satisfied. Recall that $\mu_f$ is increasing on $[\zero,(X\dot\vee a_2)\wedge Z_2^{c'}]$. Therefore, we obtain
\begin{equation}\label{eq:eim_request}
    \mu_f((X\dot\vee a_2)\wedge(Z_1\vee_\mathcal{Z}Z_2)^c)\leq\mu_f((X\dot\vee a_2)\wedge Z_1^{c'}\wedge Z_2^{c'}).
\end{equation}

By Lemma~\ref{lem:two_cases}, there exists $(Z_1 \vee_\mathcal{Z} Z_2)^{\hat{c}} \in \mathbf{C}(Z_1 \vee_\mathcal{Z} Z_2; X \dot\vee  a_2)\cap \mathbf{C}(Z_1 \vee_\mathcal{Z} Z_2; X \dot\vee a_1 \dot\vee a_2)$ such that $(Z_1 \vee_\mathcal{Z} Z_2)^{\hat{c}} \wedge (X \dot\vee a_2) = (Z_1 \vee_\mathcal{Z} Z_2)^{c} \wedge (X \dot\vee a_2)$ and one of the following holds:
\begin{description}
    \item[\namedlabel{case1}{\textbf{Case 1}}] $a_1\leq(Z_1\vee_\mathcal{Z} Z_2)^{\hat{c}}$; or
    \item[\namedlabel{case2}{\textbf{Case 2}}] $(Z_1 \vee_\mathcal{Z} Z_2)^{\hat{c}} \wedge (X \dot\vee a_1 \dot\vee a_2) = (Z_1 \vee_\mathcal{Z} Z_2)^{c} \wedge (X \dot\vee a_2)$.
\end{description}
We proceed now to consider each of these cases.

Suppose first that we are in \ref{case1}. By the modular law on $\mathcal{L}$, we have
\begin{equation}\label{eq:a_1_in_(Z_1wedgeZ_2)^c}
    (Z_1 \vee_\mathcal{Z} Z_2)^{\hat{c}} \wedge (X \dot\vee a_1 \dot\vee a_2) = ((Z_1 \vee_\mathcal{Z} Z_2)^{c} \wedge (X \dot\vee a_2)) \dot\vee a_1.
\end{equation}
Since $(Z_1 \vee_\mathcal{Z} Z_2)^c \in \textup{\textbf{C}}(Z_1 \vee_\mathcal{Z} Z_2; X \dot\vee a_2)$, we can observe that
\begin{align}
    X \dot\vee a_1 \dot\vee a_2 &= ((X \dot\vee a_2) \wedge (Z_1 \vee_\mathcal{Z} Z_2)) \dot\vee ((X \dot\vee a_2) \wedge (Z_1 \vee_\mathcal{Z} Z_2)^c) \dot\vee a_1 \nonumber\\
    &= ((X \dot\vee a_2) \wedge (Z_1 \vee_\mathcal{Z} Z_2)) \dot\vee ((X \dot\vee a_1 \dot\vee a_2) \wedge (Z_1 \vee_\mathcal{Z} Z_2)^{\hat{c}})\quad\textup{(by (\ref{eq:a_1_in_(Z_1wedgeZ_2)^c}))}\label{eq:z1z2_split}\\
    &\leq ((X\dot\vee a_1 \dot\vee a_2) \wedge (Z_1 \vee_\mathcal{Z} Z_2)) \dot\vee ((X \dot\vee a_1 \dot\vee a_2) \wedge (Z_1 \vee_\mathcal{Z} Z_2)^{\hat{c}})\nonumber\\
    &\leq X \dot\vee a_1 \dot\vee a_2.\nonumber
\end{align}
By the modularity of $\mathcal{L}$, we have $\textup{h}(X\dot\vee a_1\dot\vee a_2)=\textup{h}(X\dot\vee a_2)+1$, given that $a_1$ is an atom. Similarly, using (\ref{eq:a_1_in_(Z_1wedgeZ_2)^c}), since 
$$\textup{h}((X \dot\vee a_1 \dot\vee a_2) \wedge (Z_1 \vee_\mathcal{Z} Z_2)^{\hat{c}})=\textup{h}((X \dot\vee a_2) \wedge (Z_1 \vee_\mathcal{Z} Z_2)^c)+1,$$
we can take the sum of the heights of the terms in (\ref{eq:z1z2_split}) to deduce that 
$$\textup{h}((X \dot\vee a_1 \dot\vee a_2) \wedge (Z_1 \vee_\mathcal{Z} Z_2)) = \textup{h}((X \dot\vee a_2) \wedge (Z_1 \vee_\mathcal{Z} Z_2)),$$ 
from which we get
$$(X \dot\vee a_1 \dot\vee a_2) \wedge (Z_1 \vee_\mathcal{Z} Z_2) = (X \dot\vee a_2) \wedge (Z_1 \vee_\mathcal{Z} Z_2).$$
In this case, we have that $(X \dot\vee a_1) \wedge Z_1 = X \wedge Z_1$. Indeed, since otherwise we would have $(X \dot\vee a_1) \wedge Z_1 = (X \wedge Z_1) \dot\vee a$ for some $a \in \mathcal{A}(Z_1)$. Therefore, since $X \dot\vee a_1 = X \dot\vee a$ and $X \dot\vee a_1 \neq X \dot\vee a_2$, we have $a \nleq X \dot\vee a_2$. Therefore, we have
\begin{align*}
    (X \dot\vee a_1 \dot\vee a_2) \wedge Z_1 &= ((X \dot\vee a_2) \wedge Z_1) \dot\vee a, 
\end{align*}
which gives us
\[
(X \dot\vee a_1 \dot\vee a_2) \wedge Z_1 \nleq (X \dot\vee a_2) \wedge (Z_1 \vee_\mathcal{Z} Z_2) = (X \dot\vee a_1 \dot\vee a_2) \wedge (Z_1 \vee_\mathcal{Z} Z_2),
\]
which yields a contradiction since $(X \dot\vee a_1 \dot\vee a_2) \wedge Z_1 \leq (X \dot\vee a_1 \dot\vee a_2) \wedge (Z_1 \vee_\mathcal{Z} Z_2)$. Therefore, we conclude that $(X \dot\vee a_1) \wedge Z_1 = X \wedge Z_1$. Moreover, since $Z_1^c \in \textup{\textbf{C}}(Z_1; X) \cap \textup{\textbf{C}}(Z_1; X \dot\vee a_1)$, we have
$$X=(X\wedge Z_1)\dot\vee(X\wedge Z_1^c)\quad\textup{and}\quad X\dot\vee a_1=(X\wedge Z_1)\dot\vee((X\dot\vee a_1)\wedge Z_1^c).$$
We are therefore not in the case of $(X\dot\vee a_1)\wedge Z_1^c=X\wedge Z_1^c$, since otherwise we would have $X=X\dot\vee a_1$. By (\ref{eq:Z_1^c_either_or}), we thus conclude that $a_1\leq Z_1^c$. Therefore, by (\ref{eq:a_1_fix}), we have
$$\mu_f((X\dot\vee a_1)\wedge Z_1^c)=\mu_f(X\wedge Z_1^c)+f(a_1).$$

By Lemma~\ref{lem:mu_submod} applied to (\ref{eq:a_1_in_(Z_1wedgeZ_2)^c}), we have 
\begin{equation}\label{eq:vee_submod}
    \mu_f((X\dot\vee a_1\dot\vee a_2)\wedge(Z_1\vee_\mathcal{Z} Z_2)^{\hat{c}})\leq\mu_f((X\dot\vee a_2)\wedge(Z_1\vee_\mathcal{Z} Z_2)^c)+f(a_1).
\end{equation}
Recall that $(Z_1\wedge Z_2)^c=Z_1^c\dot\vee(Z_1\wedge Z_2^{c'})$. We take the sum of the inequalities (\ref{eq:mu_z2c'}) and (\ref{eq:vee_submod}) to get that
\begin{equation}\label{eq:mainZjoinmeet}
    \mu_f(X\wedge (Z_1\wedge Z_2)^c)+\mu_f((X\dot\vee a_1\dot\vee a_2)\wedge (Z_1\vee_\mathcal{Z} Z_2)^{\hat{c}}),
\end{equation}
is bounded above by
\begin{align}\label{eq:too_many_names}
    \mu_f(X\wedge Z_1^c)+\mu_f(X\wedge Z_1\wedge Z_2^{c'})+\mu_f((X\dot\vee a_2)\wedge(Z_1\vee_\mathcal{Z} Z_2)^c)+f(a_1).
\end{align}
By (\ref{eq:a_1_fix}), we have that (\ref{eq:too_many_names}) is equal to
\begin{equation}\label{eq:last_ineq}
    \mu_f((X\dot\vee a_1)\wedge Z_1^c)+\mu_f(X\wedge Z_1\wedge Z_2^{c'})+\mu_f((X\dot\vee a_2)\wedge(Z_1\vee_\mathcal{Z} Z_2)^c).
\end{equation}
By (\ref{eq:eim_request}), we have that (\ref{eq:last_ineq}) is bounded above by
\begin{equation}\label{eq:placeholder}
    \mu_f((X\dot\vee a_1)\wedge Z_1^c)+\mu_f(X\wedge(Z_1\wedge Z_2^{c'}))+\mu_f((X\dot\vee a_2)\wedge Z_1^{c'}\wedge Z_2^{c'}).
\end{equation}
Recall that $\mu_f$ is increasing on $[\zero,(X\dot\vee a_2)\wedge Z_2^{c'}]$. We thus have that $\mu_f(X\wedge Z_1\wedge Z_2^{c'})\leq\mu_f((X\dot\vee a_2)\wedge Z_1\wedge Z_2^{c'})$, which means that (\ref{eq:placeholder}) is bounded above by
\begin{equation}\label{eq:placeholder2}
    \mu_f((X\dot\vee a_1)\wedge Z_1^c)+\mu_f((X\dot\vee a_2)\wedge Z_1\wedge Z_2^{c'})+\mu_f((X\dot\vee a_2)\wedge Z_1^{c'}\wedge Z_2^{c'}).
\end{equation}
By (\ref{eq:z1'_split}), we have that (\ref{eq:placeholder2}) is equal to
\begin{equation}\label{eq:mainZnojoinmeet}
    \mu_f((X\dot\vee a_1)\wedge Z_1^c)+\mu_f((X\dot\vee a_2)\wedge Z_2^{c'}).
\end{equation}
In particular, (\ref{eq:mainZjoinmeet}) is bounded above by (\ref{eq:mainZnojoinmeet}), which establishes the result for \ref{case1}.

Lastly, we address \ref{case2}, which states that
$$(X\dot\vee a_1\dot\vee a_2)\wedge(Z_1\vee_\mathcal{Z} Z_2)^{\hat{c}}=(X\dot\vee a_2)\wedge(Z_1\vee_\mathcal{Z} Z_2)^c.$$
This case is simpler than the previous case as we have 
\begin{equation}\label{eq:last_one?}
    \mu_f((X\dot\vee a_1\dot\vee a_2)\wedge(Z_1\vee_\mathcal{Z} Z_2)^{\hat{c}})=\mu_f((X\dot\vee a_2)\wedge(Z_1\vee_\mathcal{Z} Z_2)^c).
\end{equation}
Recall again that $(Z_1\wedge Z_2)^c=Z_1^c\dot\vee(Z_1\wedge Z_2^{c'})$. We combine (\ref{eq:last_one?}) with (\ref{eq:mu_z2c'}) to get that 
$$\mu_f(X\wedge (Z_1\wedge Z_2)^c)+\mu_f((X\dot\vee a_1\dot\vee a_2)\wedge   (Z_1\vee_\mathcal{Z} Z_2)^{\hat{c}})$$
is bounded above by
\begin{align}\label{eq:last_one}
    \mu_f(X\wedge Z_1^c)+\mu_f(X\wedge Z_1\wedge Z_2^{c'})+\mu_f((X\dot\vee a_2)\wedge(Z_1\vee_\mathcal{Z} Z_2)^c),
\end{align}
which by (\ref{eq:eim_request}), is bounded above by
\begin{align}\label{eq:laster_last_one}
    \mu_f(X\wedge Z_1^c)+\mu_f(X\wedge Z_1\wedge Z_2^{c'})+\mu_f((X\dot\vee a_2)\wedge Z_1^{c'}\wedge Z_2^{c'}).
\end{align}
Again, we observe that $\mu_f$ is increasing on $[\zero,(X\dot\vee a_2)\wedge Z_2^{c'}]$, which means that (\ref{eq:laster_last_one}) is bounded above by
$$\mu_f(X\wedge Z_1^c)+\mu_f((X\dot\vee a_2)\wedge Z_1\wedge Z_2^{c'})+\mu_f((X\dot\vee a_2)\wedge Z_1^{c'}\wedge Z_2^{c'}),$$
which by (\ref{eq:z1'_split}) is equal to
$$\mu_f(X\wedge Z_1^c)+\mu_f((X\dot\vee a_2)\wedge Z_2^{c'}).$$
Since $\mu_f$ is increasing on $[\zero, (X\dot\vee a_1)\wedge Z_1^c]$, we have that $\mu_f(X\wedge Z_1^c)\leq\mu_f((X\dot\vee a_1)\wedge Z_1^c)$. The result follows.
\end{proof}

Our approach to proving that $(\mathcal{L},\rho_{(\lambda,f)})$ is an $\mathcal{L}$-polymatroid will be to verify that, under the conditions \ref{z1}, \ref{z2}, and \ref{z5}, axioms \ref{r1} and \ref{r2} hold, and \ref{r3} holds for all intervals of length 2 in $\mathcal{L}$. The desired result then follows from Corollary~\ref{cor:int2}.

\begin{theorem}\label{thm:axioms_imply_q-poly}
    If $(\mathcal{Z},\lambda,f)$ satisfies \ref{z1}, \ref{z2}, and \ref{z5}, then $(\mathcal{L},\rho_{(\lambda,f)})$ is an $\mathcal{L}$-polymatroid.
\end{theorem}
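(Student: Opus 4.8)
The plan is to show that $\rho:=\rho_{(\lambda,f)}$ satisfies the rank axioms of Definition~\ref{def:q-polymatroid} by routing everything through the cover-weighted lattice formalism of Section~\ref{sec:covlatt}. The key point is that once monotonicity \ref{r2} of $\rho$ is established, $w([A,B]):=\rho(B)-\rho(A)$ is a genuine cover-weighting of (the modular lattice) $\mL$; since $w([A,X])+w([X,B])=\rho(B)-\rho(A)$ is independent of $X$, axiom \ref{cw1} holds automatically, and by Theorem~\ref{th:covwt} it then suffices to verify \ref{cw2} on intervals of length two. For distinct $X,Y$ strictly inside such an interval $[A,B]$ one has $A=X\wedge Y$, $B=X\vee Y$ with $A\lessdot X\lessdot B$ and $A\lessdot Y\lessdot B$, so \ref{cw2} amounts to the ``diamond'' submodularity inequality $\rho(X)+\rho(Y)\geq\rho(X\wedge Y)+\rho(X\vee Y)$. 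Theorem~\ref{th:covwt} then outputs an $\mL$-polymatroid $(\mL,r_w)$ (so \ref{r1}, \ref{r2}, \ref{r3} hold for $r_w$), and since $\rho(\zero)=\lambda(0_\mathcal{Z})+\mu_f(\zero)=0$ by \ref{z5}, summing $w$ along a maximal chain from $\zero$ gives $r_w=\rho$, completing the proof. (For the record, \ref{r1} is also directly visible: $\rho\geq 0$, and picking $Z^c\in\cC(0_\mathcal{Z};X)$ via Lemma~\ref{lem:compl_partition} gives $\rho(X)\leq\mu_f(X\wedge Z^c)\leq\h(X)\cdot\max\{f(a):a\in\mA(\mL)\}$ using \ref{z5}.)

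I would prove \ref{r2} first; here the use of \ref{z1} is essential, because $\mu_f$ need not be monotone on $\mL$ in general. Given $X\leq Y$, apply \ref{z1} to $Y$ to obtain $Z\in\mZ(Y)$ with $\rho(Y;Z^c,Z)=\rho(Y)$ for \emph{every} $Z^c\in\cC(Z;Y)$ and with $\mu_f$ quasi-modular on $[\zero,Y\wedge Z^c]$ for each such $Z^c$. Choosing $Z^c\in\cC(Z;X)\cap\cC(Z;Y)$ by Corollary~\ref{cor:decomp_both} and using Corollary~\ref{cor:mu_is_increasing} (so $\mu_f$ is increasing on $[\zero,Y\wedge Z^c]$), one gets $\rho(X)\leq\lambda(Z)+\mu_f(X\wedge Z^c)\leq\lambda(Z)+\mu_f(Y\wedge Z^c)=\rho(Y)$.

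For the diamond inequality, set $W:=X\wedge Y$ and pick atoms $a_1,a_2\in\mA(\mL)$ that are relative complements of $W$ in $[\zero,X]$ and in $[\zero,Y]$, so $W\dot\vee a_1=X$, $W\dot\vee a_2=Y$ and $W\dot\vee a_1\dot\vee a_2=X\vee Y$. Apply \ref{z1} to $X$ and $Y$ to get $Z_1\in\mZ(X)$, $Z_2\in\mZ(Y)$ satisfying \ref{z1}; by \ref{z1}\ref{z1i}, any admissible complements satisfy $\mu_f(X\wedge Z_1^c)=\rho(X)-\lambda(Z_1)$ and $\mu_f(Y\wedge Z_2^c)=\rho(Y)-\lambda(Z_2)$. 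Feeding $W,a_1,a_2,Z_1,Z_2$ into Corollary~\ref{cor:mu_ineq} produces $(Z_1\wedge Z_2)^c\in\cC(Z_1\wedge Z_2;W)$ and $(Z_1\vee_\mathcal{Z}Z_2)^c\in\cC(Z_1\vee_\mathcal{Z}Z_2;X\vee Y)$ with
\[
\rho(X)+\rho(Y)-\lambda(Z_1)-\lambda(Z_2)\;\geq\;\mu_f\bigl(W\wedge(Z_1\wedge Z_2)^c\bigr)+\mu_f\bigl((X\vee Y)\wedge(Z_1\vee_\mathcal{Z}Z_2)^c\bigr),
\]
and the last summand is $\geq\rho(X\vee Y)-\lambda(Z_1\vee_\mathcal{Z}Z_2)$ because that complement decomposes $X\vee Y$. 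Writing $P:=Z_1\wedge Z_2$, $Q:=Z_1\wedge_\mathcal{Z}Z_2\leq P$ and $P^c:=(Z_1\wedge Z_2)^c$, I would pick $Q^c\in\cC(Q;W\wedge P)\cap\cC(Q;W)$ via Corollary~\ref{cor:decomp_both}, and apply \ref{z2} with $A=W$ and this $Q^c$ to get $\lambda(Z_1)+\lambda(Z_2)-\lambda(Z_1\vee_\mathcal{Z}Z_2)\geq\lambda(Q)+\mu_f(W\wedge P\wedge Q^c)$. Combining the three bounds gives
\[
\rho(X)+\rho(Y)\;\geq\;\rho(X\vee Y)+\lambda(Q)+\mu_f(W\wedge P\wedge Q^c)+\mu_f(W\wedge P^c).
\]

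The last step — and the one I expect to be the main obstacle — is to fold the two trailing terms into $\rho(W)$. Put $R:=(W\wedge P\wedge Q^c)\dot\vee(W\wedge P^c)$, a legitimate direct join since $P\wedge P^c=\zero$. Using $P^c\in\cC(P;W)$, $Q^c\in\cC(Q;W\wedge P)\cap\cC(Q;W)$ and $Q\leq P$, a modular-law computation gives $R\wedge P=W\wedge P\wedge Q^c$, hence $R\wedge Q=\zero$, $\h(R)=\h(W)-\h(W\wedge Q)$, and $W=(W\wedge Q)\dot\vee R$. By Corollary~\ref{cor:extend_to_complement}, $R$ extends to $Q^{c'}\in\cC(Q)$ with $R\leq Q^{c'}$; a height count then forces $Q^{c'}\in\cC(Q;W)$ with $W\wedge Q^{c'}=R$, so by Lemma~\ref{lem:mu_submod} $\mu_f(W\wedge Q^{c'})=\mu_f(R)\leq\mu_f(W\wedge P\wedge Q^c)+\mu_f(W\wedge P^c)$, whence $\rho(W)\leq\lambda(Q)+\mu_f(W\wedge Q^{c'})\leq\lambda(Q)+\mu_f(W\wedge P\wedge Q^c)+\mu_f(W\wedge P^c)$. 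Substituting into the previous display yields $\rho(X)+\rho(Y)\geq\rho(X\vee Y)+\rho(W)$, i.e.\ \ref{cw2}. The difficulty is precisely this reconciliation: Corollary~\ref{cor:mu_ineq} hands back a term attached to the \emph{lattice} meet $Z_1\wedge Z_2$ while \ref{z2} contributes a correction attached to the $\mZ$-meet $Z_1\wedge_\mathcal{Z}Z_2$, and the complements $P^c,Q^c,Q^{c'}$ must be chosen — keeping careful track of which of them decompose $W$, $W\wedge P$ and $X\vee Y$ — so that Lemma~\ref{lem:mu_submod} lines these up against the single term $\mu_f(W\wedge Q^{c'})$ bounding $\rho(W)$.
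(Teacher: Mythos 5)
Your proposal is correct and follows essentially the same route as the paper: monotonicity of $\rho_{(\lambda,f)}$ from \ref{z1} via quasi-modularity and Corollary~\ref{cor:mu_is_increasing}, then reduction of submodularity to length-two intervals (your route through Theorem~\ref{th:covwt} and \ref{cw2} and the paper's route through Corollary~\ref{cor:int2} are the same mechanism), closed out with \ref{z2}, Corollary~\ref{cor:mu_ineq}, and Lemma~\ref{lem:mu_submod}. The only real divergence is in the final reconciliation of the $\mL$-meet and $\mZ$-meet terms: the paper chooses $(Z_1\wedge_\mathcal{Z}Z_2)^c$ to \emph{contain} $(Z_1\wedge Z_2)^c$, both decomposing $W$, so the modular law splits $W\wedge(Z_1\wedge_\mathcal{Z}Z_2)^c$ directly into your two trailing summands, whereas you keep $P^c$ and $Q^c$ unrelated and reconstruct a decomposing complement $Q^{c'}$ afterwards from $R=(W\wedge P\wedge Q^c)\dot\vee(W\wedge P^c)$ and a height count — both versions are valid and rest on the same calculation.
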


\begin{proof}
    By \ref{z5}, we have
    \(
        \rho_{(\lambda,f)}(\zero) = \min\{\lambda(Z) + \mu_f(\zero) : Z \in \mathcal{Z} \} = \lambda(\zero_{\mathcal{Z}}) = 0,
    \)
    which verifies \ref{r1}.
    Let $A, B \in \mathcal{L}$ such that $A\leq B$. By \ref{z1}\ref{z1i}, there exists $Z \in \mathcal{Z}$ such that 
    \[
        \rho_{(\lambda,f)}(B) = \lambda(Z) + \mu_f(B \wedge Z^c),
    \] 
    for any $Z^c \in \mathbf{C}(Z;B)$. Let $Z^c$ be an arbitrary element of the set $\mathbf{C}(Z;A)\cap\mathbf{C}(Z;B)$, which by Corollary~\ref{cor:decomp_both}, is non empty. By \ref{z1}\ref{z1ii}, the function $\mu_f$ is weakly decomposable on $[\zero, B \wedge Z^c]$, and by Proposition~\ref{cor:mu_is_increasing}, $\mu_f$ is increasing on that interval. In particular, we have 
    \[
        \mu_f(B \wedge Z^c) \geq \mu_f(A \wedge Z^c).
    \] 
    Thus, we obtain  
    \[
        \rho_{(\lambda,f)}(B) = \lambda(Z) + \mu_f(B \wedge Z^c) \geq \lambda(Z) + \mu_f(A \wedge Z^c) \geq \rho_{(\lambda,f)}(A).
    \] 
    It follows that $\rho_{(\lambda,f)}$ is non-negative and increasing, which verifies \ref{r2}.
    
    For $X\in\mathcal{L}$, let $a_1, a_2 \in \mathcal{A}(\mathcal{L}) \backslash \mathcal{A}(X)$ such that $\len([X,X\dot\vee a_1\dot\vee a_2])=2$. We will show that $\rho_{(\lambda,f)}$ is submodular on $[X,X\dot\vee a_1\dot\vee a_2]$, which, by Corollary~\ref{cor:int2}, will imply the theorem. By \ref{z1}\ref{z1i}, for $k=1,2$, there exists $Z_k \in \mathcal{Z}(X \dot\vee a_k)$ such that  
    \[
        \rho_{(\lambda,f)}(X \dot\vee a_k) = \lambda(Z_k) + \mu_f((X \dot\vee a_k) \wedge Z_k^c),
    \] 
    for any $Z_k^c \in \mathbf{C}(Z_k; X \dot\vee a_k)$.  
    Therefore, choosing such $Z_1$ and $Z_2$, and applying \ref{z2}, we obtain  
    \begin{align}
        &\rho_{(\lambda,f)}(X \dot\vee a_1) + \rho_{(\lambda,f)}(X \dot\vee a_2) \label{eq:submod_proof}\\
        &\quad= \lambda(Z_1) + \mu_f((X \dot\vee a_1) \wedge Z_1^c) + \lambda(Z_2) + \mu_f((X \dot\vee a_2) \wedge Z_2^c) \nonumber \\
        &\quad\geq \lambda(Z_1 \wedge_{\mathcal{Z}} Z_2) + \lambda(Z_1 \vee_{\mathcal{Z}} Z_2) + \mu_f(X \wedge Z_1 \wedge Z_2 \wedge (Z_1 \wedge_{\mathcal{Z}} Z_2)^c) \nonumber \\
        &\quad\hspace{0.3cm} + \mu_f((X \dot\vee a_1) \wedge Z_1^c) + \mu_f((X \dot\vee a_2) \wedge Z_2^c) \nonumber
    \end{align}
    for any $(Z_1\wedge_\mathcal{Z}Z_2)^c\in\mathbf{C}(Z_1\wedge_\mathcal{Z}Z_2)$. By the inequality in Theorem~\ref{lem:q-poly_mu_ineq}, we thus get that $\rho_{(\lambda,f)}(X \dot\vee a_1) + \rho_{(\lambda,f)}(X \dot\vee a_2)$ is bounded below by
    \begin{align}
        \lambda(Z_1 \wedge_{\mathcal{Z}} Z_2) + \lambda(Z_1 \vee_{\mathcal{Z}} Z_2) + \mu_f(X \wedge Z_1 \wedge Z_2 \wedge (Z_1 \wedge_{\mathcal{Z}} Z_2)^c)&+\mu_f(X \wedge (Z_1 \wedge Z_2)^c) \label{eq:bigZZ_terms}\\
        &+ \mu_f((X \dot\vee a_1 \dot\vee a_2) \wedge (Z_1 \vee_{\mathcal{Z}} Z_2)^c)\nonumber
    \end{align}
    for some $(Z_1 \vee_{\mathcal{Z}} Z_2)^c \in \textup{\textbf{C}}(Z_1 \vee_{\mathcal{Z}} Z_2; X \dot\vee a_1 \dot\vee a_2)$ and some $(Z_1 \wedge Z_2)^c \in \textup{\textbf{C}}(Z_1 \wedge Z_2; X)$.
    Let $(Z_1\wedge_\mathcal{Z}Z_2)^{\hat{c}}\in\mathbf{C}(Z_1\wedge_\mathcal{Z}Z_2; X\wedge Z_1\wedge Z_2)$.
    By the modular law, if $C\leq A$ and $A\wedge B=\zero$, then $A\wedge(B\vee C)=C$. Setting $A=Z_1\wedge Z_2$, $B=(Z_1\wedge Z_2)^c$, and $C=Z_1\wedge Z_2\wedge(Z_1\wedge_\mathcal{Z}Z_2)^{\hat{c}}$ gives
    $$Z_1\wedge Z_2\wedge((Z_1\wedge Z_2)^c\dot\vee(Z_1\wedge Z_2\wedge (Z_1\wedge_\mathcal{Z}Z_2)^{\hat{c}}))=Z_1\wedge Z_2\wedge(Z_1\wedge_\mathcal{Z}Z_2)^{\hat{c}}.$$
    Since $Z_1\wedge_\mathcal{Z}Z_2\leq Z_1\wedge Z_2$, it follows that
    $$Z_1\wedge_\mathcal{Z} Z_2\wedge((Z_1\wedge Z_2)^c\dot\vee(Z_1\wedge Z_2\wedge (Z_1\wedge_\mathcal{Z}Z_2)^{\hat{c}}))=\zero.$$

    Recall that our choice of $(Z_1\wedge_\mathcal{Z}Z_2)^c\in\mathbf{C}(Z_1\wedge_\mathcal{Z}Z_2)$, which came from \ref{z2}, was arbitrary. By Corollary~\ref{cor:extend_to_complement}, we extend $(Z_1\wedge Z_2)^c\dot\vee(Z_1\wedge Z_2\wedge (Z_1\wedge_\mathcal{Z}Z_2)^{\hat{c}})$ to a complement $(Z_1\wedge_\mathcal{Z}Z_2)^c\in\mathbf{C}(Z_1\wedge_\mathcal{Z}Z_2)$ (i.e., we are now fixing $(Z_1\wedge_\mathcal{Z}Z_2)^c$ with this specification). Explicitly, we have
    \begin{equation}\label{eq:explic_ineq}
        (Z_1\wedge Z_2)^c\dot\vee(Z_1\wedge Z_2\wedge (Z_1\wedge_\mathcal{Z}Z_2)^{\hat{c}})\leq (Z_1\wedge_\mathcal{Z}Z_2)^c.
    \end{equation}
    Recall that $(Z_1\wedge_\mathcal{Z}Z_2)^{\hat{c}}\in\mathbf{C}(Z_1\wedge_\mathcal{Z}Z_2; X\wedge Z_1\wedge Z_2)$. We then have the following:
    \begin{align}
        X&=(X\wedge Z_1\wedge Z_2)\dot\vee(X\wedge (Z_1\wedge Z_2)^c)\nonumber\\
        &=(X\wedge (Z_1\wedge_\mathcal{Z}Z_2))\dot\vee(X\wedge Z_1\wedge Z_2\wedge(Z_1\wedge_\mathcal{Z}Z_2)^{\hat{c}})\dot\vee(X\wedge(Z_1\wedge Z_2)^c)\nonumber\\
        &\leq (X\wedge (Z_1\wedge_\mathcal{Z}Z_2))\dot\vee (X\wedge(( Z_1\wedge Z_2\wedge(Z_1\wedge_\mathcal{Z}Z_2)^{\hat{c}})\dot\vee(Z_1\wedge Z_2)^c))\quad\textup{(by Lemma~\ref{lem:gen_latt_distrib})}\nonumber\\
        &\leq (X\wedge(Z_1\wedge_\mathcal{Z}Z_2))\dot\vee(X\wedge(Z_1\wedge_\mathcal{Z}Z_2)^c)\quad\textup{(by }(\ref{eq:explic_ineq}))\nonumber\\
        &\leq X.\nonumber
    \end{align}
    Therefore, we have shown that  $(Z_1\wedge_\mathcal{Z}Z_2)^c\in\mathbf{C}(Z_1\wedge_\mathcal{Z}Z_2;X)$.
    
    Recall that $(Z_1\wedge Z_2)^c\leq (Z_1\wedge_\mathcal{Z}Z_2)^c$. Since $(Z_1\wedge Z_2)^c\in\mathbf{C}(Z_1\wedge Z_2;X)$, and by the modular law on $\mathcal{L}$, we have
    \begin{align*}
        X\wedge(Z_1\wedge_\mathcal{Z}Z_2)^c&=((X\wedge Z_1\wedge Z_2)\dot\vee(X\wedge(Z_1\wedge Z_2)^c)\wedge (Z_1\wedge_\mathcal{Z}Z_2)^c\\
        &=(X\wedge Z_1\wedge Z_2\wedge(Z_1\wedge_\mathcal{Z}Z_2)^c)\dot\vee(X\wedge(Z_1\wedge Z_2)^c).
    \end{align*}
    By Lemma~\ref{lem:mu_submod}, we therefore obtain
    \[
        \mu_f(X \wedge (Z_1 \wedge_{\mathcal{Z}} Z_2)^c) - \mu_f(X \wedge (Z_1 \wedge Z_2)^c) \leq \mu_f(X \wedge Z_1 \wedge Z_2 \wedge (Z_1 \wedge_{\mathcal{Z}} Z_2)^c).
    \] 
    Substituting this into (\ref{eq:bigZZ_terms}), we deduce from (\ref{eq:submod_proof}) that 
    \begin{align*}
        \rho_{(\lambda,f)}(X \dot\vee a_1) + \rho_{(\lambda,f)}(X \dot\vee a_2) 
        &\geq \lambda(Z_1 \wedge_{\mathcal{Z}} Z_2) + \mu_f(X \wedge (Z_1 \wedge_{\mathcal{Z}} Z_2)^c) \\
        &\hspace{0.3cm} + \lambda(Z_1 \vee_{\mathcal{Z}} Z_2) + \mu_f((X \dot\vee a_1 \dot\vee a_2) \wedge (Z_1 \vee_{\mathcal{Z}} Z_2)^c) \\
        &\geq \rho_{(\lambda,f)}(X) + \rho_{(\lambda,f)}(X \dot\vee a_1 \dot\vee a_2),
    \end{align*}
    where the final inequality comes from the definition of $\rho_{(\lambda,f)}$.
    
    We have thus established that $\rho_{(\lambda,f)}$ satisfies \ref{r3} on $[X,X\dot\vee a_1\dot\vee a_2]$. It now follows by Corollary~\ref{cor:int2} that $(\mathcal{L}, \rho_{(\lambda,f)})$ is an $\mathcal{L}$-polymatroid.
\end{proof}

We have established that if axioms \ref{z1}, \ref{z2}, and \ref{z5} hold for 
$(\mZ,\lambda,f)$, then $\rho_{(\lambda,f)}$ is the rank function of an $\mL$-polymatroid. In the remainder of this section, we will show that if $(\mZ,\lambda,f)$ furthermore satisfies the remaining cyclic flat axioms, then $\mZ$ indeed coincides with the lattice of cyclic flats of the $\mL$-polymatroid $(\mL,\rho_{(\lambda,f)})$. This will complete the cryptomorphism of Theorem~\ref{th:main}. We first require further technical results.

\begin{lemma}\label{lem:Z_ineq_switch}
    Suppose that $(\mZ,\lambda,f)$ satisfies axioms \ref{z1}, \ref{z2}, and \ref{z3}. Let $Z_1,Z_2\in\mathcal{Z}$. Then for all $Z_2^c\in\textup{\textbf{C}}(Z_2;Z_1)$, we have $\lambda(Z_1\vee_\mathcal{Z} Z_2)\leq\lambda(Z_2)+\mu_f(Z_1\wedge Z_2^c).$
\end{lemma}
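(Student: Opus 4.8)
The plan is to derive the bound directly from the axioms \ref{z2} and \ref{z3}, producing the required complement $Z_2^c$ of $Z_2$ only at the very end. Write $W:=Z_1\wedge_\mathcal{Z} Z_2$ and fix an arbitrary $W^c\in\cC(W)$. Since $W\leq Z_1\wedge Z_2\leq Z_1$, Proposition~\ref{prop:compdecomp} shows that $W^c$ decomposes both $Z_1$ and $Z_1\wedge Z_2$; that is, $Z_1=W\dot\vee(Z_1\wedge W^c)$ and $Z_1\wedge Z_2=W\dot\vee(Z_1\wedge Z_2\wedge W^c)$. Applying \ref{z2} to $Z_1,Z_2$ with this $W^c$ and with $A=\one$ yields
\[
\lambda(Z_1\vee_\mathcal{Z} Z_2)\leq\lambda(Z_2)+\bigl(\lambda(Z_1)-\lambda(W)\bigr)-\mu_f(Z_1\wedge Z_2\wedge W^c),
\]
while \ref{z3} applied to $W\leq Z_1$ with the same $W^c$ gives $\lambda(Z_1)-\lambda(W)\leq\mu_f(Z_1\wedge W^c)$. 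Combining these, $\lambda(Z_1\vee_\mathcal{Z} Z_2)\leq\lambda(Z_2)+\mu_f(Z_1\wedge W^c)-\mu_f(Z_1\wedge Z_2\wedge W^c)$.

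Next I would choose a relative complement $K$ of $Z_1\wedge Z_2\wedge W^c$ inside the interval $[\zero,Z_1\wedge W^c]$ — it exists because $\mL$ is relatively complemented — so that $Z_1\wedge W^c=(Z_1\wedge Z_2\wedge W^c)\dot\vee K$. By Lemma~\ref{lem:mu_submod} we get $\mu_f(Z_1\wedge W^c)-\mu_f(Z_1\wedge Z_2\wedge W^c)\leq\mu_f(K)$, so it remains only to exhibit $Z_2^c\in\cC(Z_2)$ with $K=Z_1\wedge Z_2^c$; for such a $Z_2^c$ the chain of inequalities above reads precisely $\lambda(Z_1\vee_\mathcal{Z} Z_2)\leq\lambda(Z_2)+\mu_f(Z_1\wedge Z_2^c)$, which is the assertion.

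The crux is the identity $Z_1=(Z_1\wedge Z_2)\dot\vee K$. Substituting the two decompositions through $W^c$ into $Z_1=W\dot\vee(Z_1\wedge W^c)$ gives $Z_1=W\dot\vee\bigl((Z_1\wedge Z_2\wedge W^c)\dot\vee K\bigr)=(Z_1\wedge Z_2)\dot\vee K$, where directness of each join is routine given $K\leq Z_1\wedge W^c\leq W^c$ (using, e.g., Lemma~\ref{lem:dir_join_assoc} and additivity of $\h$ over direct joins in a modular lattice). In particular $K\wedge Z_2\leq(Z_1\wedge Z_2)\wedge K=\zero$, so Corollary~\ref{cor:extend_to_complement} lets us extend $K$ to some $Z_2^c\in\cC(Z_2)$ with $K\leq Z_2^c$. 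Then $(Z_1\wedge Z_2)\dot\vee(Z_1\wedge Z_2^c)=Z_1$ — the join is direct since $Z_2\wedge Z_2^c=\zero$, and it equals $Z_1$ since it contains $(Z_1\wedge Z_2)\dot\vee K=Z_1$ — whence $\h(Z_1\wedge Z_2^c)=\h(Z_1)-\h(Z_1\wedge Z_2)=\h(K)$; as $K\leq Z_1\wedge Z_2^c$ and $\mL$ is modular, this forces $K=Z_1\wedge Z_2^c$, completing the argument.

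I expect the only real obstacle to be the lattice bookkeeping — keeping straight which joins are direct and, above all, confirming that the $Z_2^c$ obtained by extending $K$ genuinely decomposes $Z_1$, so that the height identity $\h(Z_1\wedge Z_2^c)=\h(Z_1)-\h(Z_1\wedge Z_2)$ is licensed. Everything else is a short sequence of applications of \ref{z2}, \ref{z3}, and Lemma~\ref{lem:mu_submod}. (Notably, \ref{z1} does not seem to be needed for this particular lemma; it could, however, be used to replace the final $\mu_f$-step by a monotonicity argument via Corollary~\ref{cor:mu_is_increasing}.)
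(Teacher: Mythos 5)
Your proof is correct and follows essentially the same route as the paper: both derive the intermediate bound $\lambda(Z_1\vee_\mathcal{Z} Z_2)\leq\lambda(Z_2)+\mu_f(Z_1\wedge W^c)-\mu_f(Z_1\wedge Z_2\wedge W^c)$ from \ref{z2} and \ref{z3}, then produce the required $Z_2^c\in\textup{\textbf{C}}(Z_2)$ by exhibiting a direct-join decomposition of $Z_1\wedge W^c$ and invoking Lemma~\ref{lem:mu_submod}. The only difference is bookkeeping order — the paper first takes $Z_2^c\in\textup{\textbf{C}}(Z_2;Z_1)$ and then picks $W^c\geq Z_2^c$ and verifies the decomposition by a height count, whereas you fix $W^c$ first, take a relative complement $K$ inside $[\zero,Z_1\wedge W^c]$, and then extend $K$ to the complement $Z_2^c$ via Corollary~\ref{cor:extend_to_complement}; this is if anything a little more careful about why the desired complement exists.
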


\begin{proof}
    If $Z_1\leq Z_2$, the statement follows trivially. If $Z_2\leq Z_1$, then by \ref{z3}, for all $Z_2^c\in\mathbf{C}(Z_2)$, we have
    \[
    \lambda(Z_1\vee_\mathcal{Z} Z_2) = \lambda(Z_1) \leq \lambda(Z_2) + \mu_f(Z_1\wedge Z_2^c).
    \]
    
    Now assume that $Z_1\nleq Z_2$ and $Z_2\nleq Z_1$.
    Applying \ref{z3} to $Z_1$ and $Z_1 \wedge_\mZ Z_2$, we obtain
    \begin{align*}
        \lambda(Z_1)&\leq\lambda(Z_1\wedge_\mathcal{Z} Z_2)+\mu_f(Z_1\wedge(Z_1\wedge_\mathcal{Z} Z_2)^c),
    \end{align*}
    for any $(Z_1\wedge_\mathcal{Z} Z_2)^c\in\mathbf{C}(Z_1\wedge_\mathcal{Z}Z_2)$.
    From \ref{z2}, we have
    \begin{align*}
        \lambda(Z_1\vee_\mathcal{Z} Z_2)+\lambda(Z_1\wedge_\mathcal{Z} Z_2) &\leq \lambda(Z_1)+\lambda(Z_2)-\mu_f(Z_1\wedge Z_2\wedge (Z_1\wedge_\mathcal{Z} Z_2)^c),
    \end{align*}
    for any $(Z_1\wedge_\mathcal{Z} Z_2)^c\in\mathbf{C}(Z_1\wedge_\mathcal{Z}Z_2)$.
    Adding these two inequalities yields
    \begin{equation}\label{eq:33}
         \lambda(Z_1\vee_\mathcal{Z} Z_2) \leq \lambda(Z_2) + \mu_f(Z_1\wedge(Z_1\wedge_\mathcal{Z} Z_2)^c) - \mu_f(Z_1\wedge Z_2\wedge (Z_1\wedge_\mathcal{Z} Z_2)^c).
    \end{equation}

    Note that our choice of $(Z_1\wedge_\mathcal{Z}Z_2)^c\in\cC(Z_1 \wedge_\mZ Z_2)$ in (\ref{eq:33}) is arbitrary.
    We will now show that for any $Z_2^c \in \cC(Z_2;Z_1)$, there exists $(Z_1\wedge_\mathcal{Z}Z_2)^c\in\mathbf{C}(Z_1\wedge_\mathcal{Z}Z_2)$ such that 
    \begin{equation}\label{eq:late_mu_ineq}
        \mu_f(Z_1\wedge(Z_1\wedge_\mathcal{Z} Z_2)^c) \leq \mu_f(Z_1\wedge Z_2\wedge (Z_1\wedge_\mathcal{Z} Z_2)^c) + \mu_f(Z_1\wedge Z_2^c),
    \end{equation}
    from which we will deduce the result.

    Let $Z_2^c \in \cC(Z_2;Z_1)$. By Corollary~\ref{cor:extend_to_complement}, there exists $(Z_1 \wedge_\mZ Z_2)^c\in\mathbf{C}(Z_1\wedge_\mathcal{Z}Z_2)$ such that $Z_2^c\leq(Z_1\wedge_\mathcal{Z}Z_2)^c$.
    Since 
    \[
    Z_1\wedge Z_2\wedge (Z_1\wedge_\mathcal{Z} Z_2)^c \leq Z_1\wedge(Z_1\wedge_\mathcal{Z} Z_2)^c \quad \text{and} \quad Z_1\wedge Z_2^c \leq Z_1\wedge(Z_1\wedge_\mathcal{Z} Z_2)^c,
    \]
    we have that 
    \begin{equation}\label{ineq:z1z2Zcleq}
        (Z_1\wedge Z_2\wedge(Z_1\wedge_\mathcal{Z} Z_2)^c )\vee (Z_1\wedge Z_2^c) \leq Z_1\wedge(Z_1\wedge_\mathcal{Z} Z_2)^c.
    \end{equation}
    Since $Z_1\wedge_\mathcal{Z}Z_2\leq Z_1\wedge Z_2$, Proposition~\ref{prop:compdecomp} implies that $(Z_1\wedge_\mathcal{Z}Z_2)^c\in\mathbf{C}(Z_1\wedge_\mathcal{Z}Z_2;Z_1\wedge Z_2)$. Therefore, using the modularity of $\mathcal{L}$, we compute
    \begin{align*}
        \operatorname{h}((Z_1\wedge Z_2\wedge(Z_1\wedge_\mathcal{Z} Z_2)^c )\dot\vee (Z_1\wedge Z_2^c))&=\operatorname{h}(Z_1\wedge Z_2\wedge (Z_1\wedge_\mathcal{Z} Z_2)^c)+\operatorname{h}(Z_1\wedge Z_2^c)\\
        &= \operatorname{h}(Z_1\wedge Z_2) - \operatorname{h}(Z_1\wedge_\mathcal{Z} Z_2) + \operatorname{h}(Z_1) - \operatorname{h}(Z_1\wedge Z_2) \\
        &= \operatorname{h}(Z_1) - \operatorname{h}(Z_1\wedge_\mathcal{Z} Z_2) \\
        &= \operatorname{h}(Z_1\wedge(Z_1\wedge_\mathcal{Z} Z_2)^c),
    \end{align*}
    which by (\ref{ineq:z1z2Zcleq}) confirms that 
    \[
    Z_1\wedge(Z_1\wedge_\mathcal{Z} Z_2)^c = (Z_1\wedge Z_2\wedge (Z_1\wedge_\mathcal{Z} Z_2)^c)\dot\vee(Z_1\wedge Z_2^c).
    \]
    By Lemma~\ref{lem:mu_submod}, we have thus shown that (\ref{eq:late_mu_ineq}) holds. By substituting (\ref{eq:late_mu_ineq}) into (\ref{eq:33}), we get
    $$\lambda(Z_1\vee_\mathcal{Z}Z_2)\leq\lambda(Z_2)+\mu_f(Z_1\wedge Z_2^c),$$
    where our choice of $Z_2^c\in\mathbf{C}(Z_2;Z_1)$ was arbitrary. This is the desired result.
\end{proof}

\begin{corollary}\label{cor:lambdaZ_is_rhoZ}
    Suppose that $(\mZ,\lambda,f)$ satisfies axioms \ref{z1}, \ref{z2}, and \ref{z3}. If $Z\in\mathcal{Z}$, then $\rho_{(\lambda,f)}(Z) = \lambda(Z)$.
\end{corollary}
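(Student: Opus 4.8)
The plan is to establish the two inequalities $\rho_{(\lambda,f)}(Z)\le\lambda(Z)$ and $\rho_{(\lambda,f)}(Z)\ge\lambda(Z)$ separately. The first is immediate from the definition of $\rho_{(\lambda,f)}$ as a minimum: since $\mathcal{L}$ is complemented we may pick $Z^c\in\textup{\textbf{C}}(Z)=\textup{\textbf{C}}(Z;Z)$, and then $Z\wedge Z^c=\zero$, so $\rho_{(\lambda,f)}(Z;Z^c,Z)=\lambda(Z)+\mu_f(\zero)=\lambda(Z)$, whence $\rho_{(\lambda,f)}(Z)\le\lambda(Z)$. This step uses nothing beyond the definitions.

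For the reverse inequality I would apply \ref{z1} with $X=Z$ to obtain a distinguished $W\in\mathcal{Z}(Z)$ satisfying \ref{z1i} and \ref{z1ii}. By \ref{z1i}, $\rho_{(\lambda,f)}(Z)=\lambda(W)+\mu_f(Z\wedge W^c)$ for \emph{every} $W^c\in\textup{\textbf{C}}(W;Z)$ (which is non-empty by Lemma~\ref{lem:compl_partition}); in particular $\mu_f(Z\wedge W^c)$ does not depend on this choice. I would then invoke Lemma~\ref{lem:Z_ineq_switch} with $Z_1=Z$, $Z_2=W$ to get some $W^{c_0}\in\textup{\textbf{C}}(W)$ with $\lambda(Z\vee_\mathcal{Z}W)\le\lambda(W)+\mu_f(Z\wedge W^{c_0})$. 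Tracing the cases in the proof of Lemma~\ref{lem:Z_ineq_switch}, $W^{c_0}$ can be taken in $\textup{\textbf{C}}(W;Z)$: in the incomparable case it already decomposes $Z$; if $Z\le W$ then $\textup{\textbf{C}}(W;Z)=\textup{\textbf{C}}(W)$ and $Z\wedge W^{c_0}=\zero$; and if $W\le Z$ the inequality used there, \ref{z3}, holds for every complement of $W$. With $W^{c_0}\in\textup{\textbf{C}}(W;Z)$, \ref{z1i} gives $\mu_f(Z\wedge W^{c_0})=\mu_f(Z\wedge W^c)$, so
\[
\lambda(Z\vee_\mathcal{Z}W)\;\le\;\lambda(W)+\mu_f(Z\wedge W^c)\;=\;\rho_{(\lambda,f)}(Z).
\]
Since $Z\le Z\vee_\mathcal{Z}W$ and $\lambda$ is increasing on $\mathcal{Z}$ (as guaranteed by \ref{z3*}), we get $\lambda(Z)\le\lambda(Z\vee_\mathcal{Z}W)\le\rho_{(\lambda,f)}(Z)$, which finishes the proof. (Alternatively, if $W\not\le Z$ then $Z<Z\vee_\mathcal{Z}W$, and combining the displayed inequality with $\rho_{(\lambda,f)}(Z)\le\lambda(Z)$ and the strict part of \ref{z3*} yields a contradiction; hence $W\le Z$, in which case \ref{z3} applied to $W\le Z$ gives $\lambda(Z)-\lambda(W)\le\mu_f(Z\wedge W^c)$ directly.)

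The routine parts are the easy inequality and the application of Lemma~\ref{lem:Z_ineq_switch}. The main obstacle I expect is the complement bookkeeping: one must carefully verify that the complement $W^{c_0}$ produced by Lemma~\ref{lem:Z_ineq_switch} can be arranged to lie in $\textup{\textbf{C}}(W;Z)$, since only then can \ref{z1i} be used to identify $\mu_f(Z\wedge W^{c_0})$ with the value $\mu_f(Z\wedge W^c)$ appearing in $\rho_{(\lambda,f)}(Z)$. The only other ingredient is monotonicity of $\lambda$ on $\mathcal{Z}$, which is immediate from \ref{z3*}.
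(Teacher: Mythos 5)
Your argument is essentially the paper's: the upper bound $\rho_{(\lambda,f)}(Z)\leq\lambda(Z)$ is immediate from the definition, and the lower bound combines Lemma~\ref{lem:Z_ineq_switch} with the monotonicity of $\lambda$ on $\mathcal{Z}$. You are, however, more careful than the paper in two places. The paper writes $\lambda(Z)\leq\lambda(Z\vee_\mathcal{Z}\tilde{Z})\leq\lambda(\tilde{Z})+\mu_f(Z\wedge\tilde{Z}^c)$ for every $\tilde{Z}^c\in\textup{\textbf{C}}(\tilde{Z};Z)$, attributing the second inequality to Lemma~\ref{lem:Z_ineq_switch}; but that lemma only produces the inequality for \emph{some} complement of $\tilde{Z}$. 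Your device of restricting to the distinguished $W\in\mathcal{Z}(Z)$ supplied by \ref{z1}, so that \ref{z1}\ref{z1i} makes $\mu_f(Z\wedge W^c)$ independent of the choice of $W^c\in\textup{\textbf{C}}(W;Z)$, is a cleaner way to close this, and your case-check that the complement produced in the proof of Lemma~\ref{lem:Z_ineq_switch} can be taken in $\textup{\textbf{C}}(W;Z)$ is correct. You also flag explicitly that the final step $\lambda(Z)\leq\lambda(Z\vee_\mathcal{Z}W)$ rests on \ref{z3*}; the paper uses the same step silently. Since \ref{z3*} is not among the stated hypotheses of the corollary, and monotonicity of $\lambda$ on $\mathcal{Z}$ does not follow from \ref{z1}, \ref{z2}, \ref{z3} alone (one can cook up small examples on a Boolean lattice satisfying all three in which $\lambda$ strictly decreases along a chain in $\mathcal{Z}$, making $\rho_{(\lambda,f)}(0_\mathcal{Z})<\lambda(0_\mathcal{Z})$), this is a genuine dependency that you have inherited from the paper rather than introduced. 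In the results where the corollary is actually invoked all axioms \ref{z1}--\ref{z6} are in force, so the development is unaffected, but your proof is the more honest of the two about what is being used.
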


\begin{proof}
    By definition, we have $\rho_{(\lambda,f)}(Z) \leq \lambda(Z)$. Conversely, for any $\Tilde{Z} \in \mathcal{Z}$, Lemma~\ref{lem:Z_ineq_switch} gives
    \[
    \lambda(Z) \leq \lambda(Z \vee_\mathcal{Z} \Tilde{Z}) \leq \lambda(\Tilde{Z}) + \mu_f(Z \wedge \Tilde{Z}^c)
    \]
    for any $\Tilde{Z}^c \in \textup{\textbf{C}}(\Tilde{Z};Z)$. Therefore, the definition of $\rho_{(\lambda,f)}$ gives $\lambda(Z)\leq\rho_{(\lambda,f)}(Z)$.
\end{proof}

\begin{lemma}\label{lem:hyper_not_cyclic}
    Suppose that $(\mZ,\lambda,f)$ satisfies axioms \ref{z1} and \ref{z6}. Let $X \in \mathcal{L}$. 
    Let $Z \in \mathcal{Z}(X)$ 
    satisfy \ref{z1} for $X$. If $Z < X$, then there exists $H \in \mH(X)$ such that for all $H^c \in \textup{\textbf{C}}(H)$, we have
    \[
    \rho_{(\lambda,f)}(X) - \rho_{(\lambda,f)}(H) = \mu_f(X \wedge H^c).
    \]
\end{lemma}

\begin{proof}
    Let $Z^c$ be any element of the set $\mathbf{C}(Z;H)\cap\mathbf{C}(Z;X)$, which by Corollary~\ref{cor:decomp_both}, is non-empty. By \ref{z1}\ref{z1ii}, $\mu_f$ is weakly decomposable on $[\zero,X \wedge Z^c]$. Using Lemma~\ref{lem:q-mod_implies_mu-support}, we can choose a maximal chain 
    \[
    \zero = H_m \lessdot \dots \lessdot H_0 = X\wedge Z^c
    \]
    (where $m=\textup{h}(X\wedge Z^c)$), with associated layering $L_1, \dots, L_m$ such that $f$ is constant on each layer $L_k$, for $k \in [m]$. Hence,  by Corollary~\ref{cor:mu_q-mod_gives_additive}, we deduce that
    \[
    \mu_f(X\wedge Z^c) = \sum_{i=1}^m f(a_i) \quad \text{and}\quad \mu_f(H_1) = \sum_{i=2}^m f(a_i)
    \]
    for arbitrary $a_i\in L_i\cap\mathcal{A}(X\wedge Z^c)$.
    Therefore, we obtain
    \begin{equation}\label{eq:hyper+atom}
        \mu_f(X\wedge Z^c) = \mu_f(H_1) + \mu_f(a) \quad \text{for any } a\in L_1.
    \end{equation}

    By the modularity of $\mathcal{L}$, we can choose $H \in \mathcal{H}(X)$ such that $H\wedge Z^c = H_1$. By \ref{z1}\ref{z1i}, there exists $\Tilde{Z} \in \mathcal{Z}$ such that
    \[
    \rho_{(\lambda,f)}(H) = \lambda(\Tilde{Z}) + \mu_f(H\wedge \Tilde{Z}^c)\quad  \textup{for all } \Tilde{Z}^c\in\textup{\textbf{C}}(\Tilde{Z};H).
    \]
    We will let $\tilde{Z}^c$ be any element of the set $\mathbf{C}(\tilde{Z};H)\cap\mathbf{C}(\tilde{Z};X)$, which by Corollary~\ref{cor:decomp_both}, is non-empty. Recall that $Z^c\in\mathbf{C}(Z;H)\cap\mathbf{C}(Z;X)$. By the definition of $\rho_{(\lambda,f)}$, we get
    \begin{align}
        \rho_{(\lambda,f)}(X) &= \lambda(Z) + \mu_f(X\wedge Z^c) \leq \lambda(\Tilde{Z}) + \mu_f(X\wedge\Tilde{Z}^c),\label{eqs:rho_X}\\
        \rho_{(\lambda,f)}(H) &= \lambda(\Tilde{Z}) + \mu_f(H\wedge\Tilde{Z}^c) \leq \lambda(Z) + \mu_f(H\wedge Z^c).\label{eqs:rho_hyper}
    \end{align}
    Adding these inequalities yields
    \begin{equation}\label{eq:hyper_mu_sum}
        \mu_f(X\wedge Z^c) + \mu_f(H\wedge \Tilde{Z}^c) \leq \mu_f(X\wedge\Tilde{Z}^c) + \mu_f(H\wedge Z^c),
    \end{equation}
    for any $\Tilde{Z}^c\in\textup{\textbf{C}}(\Tilde{Z};H)$.
    
    Since $a\in L_1$, we have that $a \nleq H_1$ and $a\leq X \wedge Z^c$. Since $H_1=H\wedge Z^c=H\wedge X\wedge Z^c$, this implies that $a\nleq H$, which means that $H\dot\vee a=X$.
    We consider two cases:

    \textbf{Case 1} \( a \leq \Tilde{Z} \): By the modularity of $\mathcal{L}$, we have in this case that $(H\dot\vee a)\wedge \tilde{Z}=(H\wedge \tilde{Z})\dot\vee a$.
    Since $\tilde{Z}^c\in\mathbf{C}(\tilde{Z};H)$, we have
    \begin{align*}
        \textup{h}(H\dot\vee a)&\geq\textup{h}((H\dot\vee a)\wedge\tilde{Z})+\textup{h}((H\dot\vee a)\wedge\tilde{Z}^c)\\
        &=1+\textup{h}(H\wedge\tilde{Z})+\textup{h}((H\dot\vee a)\wedge\tilde{Z}^c)\\
        &\geq 1+\textup{h}(H\wedge\tilde{Z})+\textup{h}(H\wedge\tilde{Z}^c)\\
        &=1+\textup{h}(H)\\
        &=\textup{h}(H\dot\vee a),
    \end{align*}
    from which we deduce that $(H\dot\vee a)\wedge \tilde{Z}^c=H\wedge \tilde{Z}^c$, and so since $X=H\dot\vee a$, we have $X\wedge\Tilde{Z}^c = H\wedge\Tilde{Z}^c.$
      By (\ref{eq:hyper_mu_sum}), this yields
      $\mu_f(X\wedge Z^c)\leq\mu_f(H\wedge Z^c).$
      However, we established in (\ref{eq:hyper+atom}) that $\mu_f(X\wedge Z^c)=\mu_f(H\wedge Z^c)+f(a)$, which would then imply that $f(a)=0$. Since $a\nleq Z$, we have $a\nleq0_\mathcal{Z}$. By \ref{z6}, we have that $f(a)>0$, which gives a contradiction. Therefore, $a\leq\tilde{Z}$ is not possible.

    \textbf{Case 2} \( a \not\leq \Tilde{Z} \): Since $a$ is an atom, we have $\tilde{Z}\wedge a=\zero$.
      Then, by Corollary~\ref{cor:extend_to_complement}, there exists $\Tilde{Z}^c\in\textup{\textbf{C}}(\Tilde{Z};H)$ such that $a\leq \Tilde{Z}^c$, implying, by the modularity of $\mathcal{L}$, that $X\wedge\Tilde{Z}^c = (H\wedge\Tilde{Z}^c) \dot\vee a.$
      Recall that $H_1=H\wedge Z^c$. We observe the following:
      \begin{align}\label{eq:mu_inequalities}
          \mu_f(H\wedge Z^c) + \mu_f(a) + \mu_f(H\wedge\Tilde{Z}^c) &= \mu_f(X\wedge Z^c) + \mu_f(H\wedge\Tilde{Z}^c)\quad(\textup{by } (\ref{eq:hyper+atom}))\nonumber \\
          &\leq \mu_f(X\wedge\Tilde{Z}^c) + \mu_f(H\wedge Z^c)\quad\textup{(by (\ref{eq:hyper_mu_sum}))} \nonumber\\
          &\leq \mu_f(H\wedge\Tilde{Z}^c) + \mu_f(a) + \mu_f(H\wedge Z^c)\quad\textup{(by Lemma \ref{lem:mu_submod})}. \nonumber
      \end{align}
      Thus, equality holds in (\ref{eq:hyper_mu_sum}). Therefore, equality must hold in in (\ref{eqs:rho_X}) and (\ref{eqs:rho_hyper}). From these equalities, we deduce that $\rho_{(\lambda,f)}(X) - \rho_{(\lambda,f)}(H) = \mu_f(X\wedge Z^c) - \mu_f(H\wedge Z^c).$
    Since $H\wedge Z^c=H_1 \lessdot X\wedge Z^c$, it follows that $\mu_f(X\wedge Z^c) - \mu_f(H\wedge Z^c) = \mu_f(a) = \mu_f(X\wedge H^c)$
    for some $H^c\in\textup{\textbf{C}}(H)$. As $a\in L_1$ was arbitrary, $H^c\in\textup{\textbf{C}}(H)$ is also arbitrary. The result follows.
\end{proof}

We implicitly use the result of Theorem~\ref{thm:axioms_imply_q-poly} for several of the following results.

\begin{lemma}\label{lem:Z_leq_flat}
    Suppose that $(\mZ,\lambda,f)$ satisfies axioms \ref{z1}, \ref{z2}, and \ref{z5}. Then, for any flat $F$ of the $\mathcal{L}$-polymatroid $(\mathcal{L},\rho_{(\lambda,f)})$, we have $Z \leq F$ for all $Z \in \mathcal{Z}(F)$.
\end{lemma}

\begin{proof}
    Let $F \in \mathcal{L}$ be a flat, and suppose that
    \begin{equation}\label{eq:rho_FZ}
        \rho_{(\lambda,f)}(F) = \lambda(Z) + \mu_f(F \wedge Z^c)
    \end{equation}
    for some $Z \in \mathcal{Z}$ and some $Z^c \in \textbf{C}(Z;F)$. Note that $F \wedge Z^c \leq (F \vee Z) \wedge Z^c$. Since $Z^c\in\mathbf{C}(Z;F)$, we have, using the modularity of $\mathcal{L}$,
    $$\textup{h}(F\wedge Z^c)=\textup{h}(F)-\textup{h}(F\wedge Z).$$
    Moreover, by Proposition~\ref{prop:compdecomp}, we have that $Z^c\in\mathbf{C}(Z;F\vee Z)$, from which we obtain
    $$\textup{h}((F\vee Z)\wedge Z^c)=\textup{h}(F\vee Z)-\textup{h}((F\vee Z)\wedge Z)=\textup{h}(F\vee Z)-\textup{h}(Z).$$
    Again by the modularity of $\mathcal{L}$, we have
    $\textup{h}(F)-\textup{h}(F\wedge Z)=\textup{h}(F\vee Z)-\textup{h}(Z),$
    from which we obtain $\textup{h}(F\wedge Z^c)=\textup{h}((F\vee Z)\wedge Z^c)$, and thus 
    $F \wedge Z^c = (F \vee Z) \wedge Z^c.$
    Substituting this into (\ref{eq:rho_FZ}), we get $\rho_{(\lambda,f)}(F) = \lambda(Z) + \mu_f((F \vee Z) \wedge Z^c).$
    Since $\rho_{(\lambda,f)}$ satisfies \ref{r2}, we also have $\rho_{(\lambda,f)}(F \vee Z) \geq \rho_{(\lambda,f)}(F).$
    Combining these inequalities, we obtain
    \[
    \rho_{(\lambda,f)}(F \vee Z) \geq \rho_{(\lambda,f)}(F) = \lambda(Z) + \mu_f((F \vee Z) \wedge Z^c) \geq \rho_{(\lambda,f)}(F \vee Z).
    \]
    This forces equality, meaning that $F \vee Z = F$, since $F$ is a flat. The result follows.
\end{proof}

\begin{corollary}\label{lem:cyc_flats_in_Z}
    Suppose that $(\mZ,\lambda,f)$ satisfies axioms \ref{z1}, \ref{z2}, \ref{z5}, and \ref{z6}. Every cyclic flat of the $\mathcal{L}$-polymatroid $(\mathcal{L},\rho_{(\lambda,f)})$ is an element of $\mathcal{Z}$.
\end{corollary}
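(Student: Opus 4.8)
The plan is to prove the slightly stronger statement that if $F$ is a cyclic flat of $\mP:=(\mL,\rho_{(\lambda,f)})$ and $Z\in\mZ(F)$ is an element supplied by \ref{z1}, then $Z=F$; as $Z\in\mZ$, this yields $F\in\mZ$. Write $\rho:=\rho_{(\lambda,f)}$. By Theorem~\ref{thm:axioms_imply_q-poly} and Lemma~\ref{lem:Z_leq_flat}, $\mP$ is an $\mL$-polymatroid and, since $F$ is a flat, $Z\leq F$. Assume for contradiction that $Z<F$. Because $Z\leq F$, Proposition~\ref{prop:compdecomp} gives $\cC(Z;F)=\cC(Z)$, so \ref{z1}\ref{z1i} reads $\rho(F)=\lambda(Z)+\mu_f(F\wedge Z^c)$ for every $Z^c\in\cC(Z)$, while \ref{z1}\ref{z1ii} says $\mu_f$ is quasi-modular on every $[\zero,F\wedge Z^c]$. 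Thus Lemma~\ref{lem:hyper_not_cyclic} applies, and its proof produces a coatom $H\in\mH(F)$, which may be taken with $Z\leq H$ (namely $H=Z\vee H_1$ for a coatom $H_1$ of $F\wedge Z^c$), satisfying $\rho(F)-\rho(H)=\mu_f(F\wedge H^c)$ for every $H^c\in\cC(H)$.

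The core of the argument is the dichotomy imposed by cyclicity of $F$ at $H$. Fix any atom $a\in\mA(F)\setminus\mA(H)$; this set is nonempty since $H\lessdot F$. By Corollary~\ref{cor:extend_to_complement} extend $a$ to a complement $H^c\in\cC(H)$; then $F\wedge H^c$ is an atom (because $\h(F)-\h(H)=1$) lying above $a$, hence equals $a$, so $\rho(F)-\rho(H)=\mu_f(a)=f(a)$. Consequently the value $c:=\rho(F)-\rho(H)\geq 0$ is independent of the choice of $a$. Next note that $\rho(b)\leq f(b)$ for every atom $b$: taking $0_\mZ\in\mZ$ with $\lambda(0_\mZ)=0$ (by \ref{z5}) and, via Corollary~\ref{cor:extend_to_complement}, a complement of $0_\mZ$ containing $b$, one gets $\rho(b)\leq\lambda(0_\mZ)+\mu_f(b)=f(b)$. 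Now apply Definition~\ref{def:cyclic_space} to $F$ at $H$: its second alternative would furnish $a_0\in\mA(F)\setminus\mA(H)$ with $c<\rho(a_0)\leq f(a_0)=c$, which is impossible; since the two alternatives are exhaustive, the first holds, i.e. $c=0$, and therefore $f(a)=0$ for every $a\in\mA(F)\setminus\mA(H)$.

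To finish, invoke \ref{z6}: each such atom $a$ satisfies $\mu_f(a)=f(a)=0$, hence $a\leq 0_\mZ$; but $0_\mZ\leq Z\leq H$, so $a\leq H$, contradicting $a\in\mA(F)\setminus\mA(H)$. Hence $Z=F$, so $F\in\mZ$. I expect the main obstacle to be precisely this final ``$\rho(F)=\rho(H)$'' branch: flatness of $F$ does not by itself preclude a rank-preserving coatom, and one must use axiom \ref{z6}, in tandem with the fact that the coatom from Lemma~\ref{lem:hyper_not_cyclic} can be chosen above $Z$, to convert the vanishing of $f$ on the relevant atoms into a contradiction; everything else reduces to routine manipulation of relative complements and the identity $\rho_{(\lambda,f)}(X)=\min\{\lambda(Z)+\mu_f(X\wedge Z^c)\}$.
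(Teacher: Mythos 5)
Your proof follows the same two-lemma route as the paper's --- Lemma~\ref{lem:Z_leq_flat} to get $Z\leq F$, then Lemma~\ref{lem:hyper_not_cyclic} to produce the coatom $H$ --- but it fills a genuine gap. The paper simply asserts ``By Lemma~\ref{lem:hyper_not_cyclic} we have that $F$ is not cyclic,'' yet the lemma only yields a coatom $H\in\mH(F)$ with $\rho_{(\lambda,f)}(F)-\rho_{(\lambda,f)}(H)=\mu_f(F\wedge H^c)$ for every $H^c\in\cC(H)$. As you observe, combined with $\rho_{(\lambda,f)}(a)\leq\mu_f(a)=f(a)$ for atoms $a$ (which itself uses \ref{z5}), this rules out the \emph{second} alternative of Definition~\ref{def:cyclic_space} at $H$, but it does not rule out the \emph{first}: if $\rho_{(\lambda,f)}(F)=\rho_{(\lambda,f)}(H)$, the coatom $H$ is consistent with $F$ being cyclic and no contradiction is reached. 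You correctly isolate this case and close it with \ref{z6}: if $\rho_{(\lambda,f)}(F)=\rho_{(\lambda,f)}(H)$ then $f(a)=0$ for every $a\in\mA(F)\setminus\mA(H)$, whence $a\leq 0_\mZ\leq Z\leq H$, a contradiction.

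The one issue is that \ref{z6} is not among the hypotheses stated for the corollary (which lists only \ref{z1}, \ref{z2}, \ref{z5}), and it cannot be omitted. On the two-element Boolean lattice $\mL=\{\zero,\one\}$, take $\mZ=\{\zero\}$, $\lambda(\zero)=0$, $f(\one)=0$. Then \ref{z1}, \ref{z2}, \ref{z5} hold, $\rho_{(\lambda,f)}\equiv 0$, and $\one$ is a cyclic flat of $(\mL,\rho_{(\lambda,f)})$ lying outside $\mZ$; of course \ref{z6} fails since $\mu_f(\one)=0$ while $\one\nleq 0_\mZ$. So the corollary as literally stated is false, and its proof in the paper is incomplete in exactly the place you flagged. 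Since the corollary is only invoked inside Theorem~\ref{th:main}, where all of \ref{z1}--\ref{z6} are assumed, the cryptomorphism itself is unaffected, but the corollary's hypotheses should be strengthened to include \ref{z6}, with the proof completed essentially as you have written it.
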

\begin{proof}
    Let $F\in\mathcal{L}$ be a flat of $(\mathcal{L},\rho_{(\lambda,f)})$ and let $Z\in\mathcal{Z}(F)$. By Lemma~\ref{lem:Z_leq_flat}, we have that $Z \leq F$. If $F \notin \mathcal{Z}$, then $Z < F$. By Lemma~\ref{lem:hyper_not_cyclic}, there exists $H\in\mathcal{H}(F)$ such that
    $$\rho_{(\lambda,f)}(F)-\rho_{(\lambda,f)}(H)=\mu_f(X\wedge H^c)$$
    for all $H^c\in\mathbf{C}(H)$. This means that, by definition, $F$ is not cyclic. Therefore, we conclude that if $F\in\mathcal{L}$ is a cyclic flat of $(\mathcal{L},\rho_{(\lambda,f)})$, then $F\in\mathcal{Z}$.
\end{proof}

\begin{lemma}\label{lem:Z_are_cyc_flats}
    If $(\mZ,\lambda,f)$ satisfies axioms \ref{z1}--\ref{z6}, then every element of $\mathcal{Z}$ is a cyclic flat of the $\mathcal{L}$-polymatroid $(\mathcal{L},\rho_{(\lambda,f)})$.
\end{lemma}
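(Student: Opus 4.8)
The plan is to show that every $Z\in\mathcal{Z}$ is in fact equal to $\cyc(\cl(Z))$, where the closure $\cl$ and cyclic operator $\cyc$ are computed in the $\mathcal{L}$-polymatroid $\mathcal{P}:=(\mathcal{L},\rho_{(\lambda,f)})$; since $\cl(Z)$ is a flat of $\mathcal{P}$ and $\cyc$ of a flat is again a flat (Lemma~\ref{lem:cyc(F)_is_also_flat}) while $\cyc(X)$ is always a cycle (Lemma~\ref{lem:cyc_join}), this identity exhibits $Z$ as a cyclic flat of $\mathcal{P}$. Throughout I would write $\rho:=\rho_{(\lambda,f)}$ and use the facts already in hand: $\mathcal{P}$ is an $\mathcal{L}$-polymatroid by Theorem~\ref{thm:axioms_imply_q-poly} (using \ref{z1}, \ref{z2}, \ref{z5}); $\rho(Z)=\lambda(Z)$ for every $Z\in\mathcal{Z}$ by the corollary following Lemma~\ref{lem:Z_ineq_switch} (using \ref{z1}, \ref{z2}, \ref{z3}); $\rho(a)=f(a)$ for every atom $a$ (from the proof of the corollary to Theorem~\ref{thm:axioms_imply_q-poly}), so that $\mu_\rho=\mu_f$; and every cyclic flat of $\mathcal{P}$ lies in $\mathcal{Z}$ by Corollary~\ref{lem:cyc_flats_in_Z}. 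Fix $Z\in\mathcal{Z}$ and set $F:=\cl(Z)$, $C:=\cyc(F)$; then $C$ is a cyclic flat of $\mathcal{P}$, hence $C\in\mathcal{Z}$.

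First I would prove $C\leq Z$. By Lemma~\ref{lem:Z_ineq_switch} there is $C^c\in\cC(C)$ with $\lambda(Z\vee_\mathcal{Z} C)\leq\lambda(C)+\mu_f(Z\wedge C^c)$. Applying Lemma~\ref{lem:cyc_rank_identity} to $\mathcal{P}$ with $X=A=F$ gives $\mu_f(F\wedge C^c)=\rho(F)-\rho(C)=\rho(Z)-\lambda(C)=\lambda(Z)-\lambda(C)$, where $\rho(F)=\rho(Z)$ because $F=\cl(Z)$. Since $Z\leq F$ and $\mu_f$ is quasi-modular, hence (by Corollary~\ref{cor:mu_is_increasing}) increasing, on $[\zero,F\wedge C^c]$ by Lemma~\ref{lem:mu_is_q-mod_on_cf} applied to $\mathcal{P}$ with $X=F$, we get $\mu_f(Z\wedge C^c)\leq\mu_f(F\wedge C^c)=\lambda(Z)-\lambda(C)$, and therefore $\lambda(Z\vee_\mathcal{Z} C)\leq\lambda(Z)$. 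As $Z\leq Z\vee_\mathcal{Z} C$, axiom \ref{z3*} forbids $Z<Z\vee_\mathcal{Z} C$, so $Z=Z\vee_\mathcal{Z} C$ and $C\leq Z$. Since $C$ is a cycle with $C\leq Z\leq F=\cl(Z)$, maximality of $\cyc$ then forces $C=\cyc(Z)$.

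Next I would prove $C=Z$. Suppose not, so $C=\cyc(Z)<Z$; then $Z$ is not cyclic in $\mathcal{P}$, so there is $H\in\mathcal{H}(Z)$ with $\rho(Z)>\rho(H)$ and $\rho(Z)-\rho(H)\geq f(a)$ for every $a\in\mathcal{A}(Z)\backslash\mathcal{A}(H)$. Since this last condition rules out $\rho(Z)-\rho(H)<f(a)$ for any such $a$, Lemma~\ref{lem:hyper_cycle} (applied to $\mathcal{P}$ with the cycle $C\leq Z$) forces $C\leq H$; and submodularity of $\rho$ gives the reverse inequality $\rho(Z)-\rho(H)\leq f(a)$, so $\rho(Z)-\rho(H)=f(a)$ for every $a\in\mathcal{A}(Z)\backslash\mathcal{A}(H)$. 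Now apply \ref{z3*} to $C<Z$: by Lemma~\ref{lem:cyc_rank_identity} applied to $\mathcal{P}$ with $X=A=Z$ we have $\lambda(Z)-\lambda(C)=\rho(Z)-\rho(C)=\mu_f(Z\wedge C^c)$ for every $C^c\in\cC(C)$, so \ref{z3*}\ref{z3i} fails for every $C^c$, and hence \ref{z3*}\ref{z3ii} holds. Since $C\leq H$, Proposition~\ref{prop:compdecomp} gives $C^c\in\cC(C;H)$ for every $C^c\in\cC(C)$, so \ref{z3*}\ref{z3ii} produces some $H^c\in\cC(H)$ with $\mu_f(Z\wedge C^c)<\mu_f(H\wedge C^c)+\mu_f(Z\wedge H^c)$. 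But Lemma~\ref{lem:cyc_rank_identity} applied to $\mathcal{P}$ with $X=Z$, $A=H$ (valid since $C=\cyc(Z)\leq H\leq Z$) gives $\rho(H)=\rho(C)+\mu_f(H\wedge C^c)$, whence $\mu_f(Z\wedge C^c)-\mu_f(H\wedge C^c)=\rho(Z)-\rho(H)$; and $Z\wedge H^c$ is an atom of $Z$ not below $H$ (Proposition~\ref{prop:compdecomp} shows $H^c$ decomposes $Z$, so $\h(Z\wedge H^c)=1$), so $\mu_f(Z\wedge H^c)=f(Z\wedge H^c)=\rho(Z)-\rho(H)$. Substituting into the strict inequality gives $\rho(Z)-\rho(H)<\rho(Z)-\rho(H)$, a contradiction. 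Hence $C=Z$, so $Z=C=\cyc(\cl(Z))$ is a cyclic flat of $\mathcal{P}$.

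I expect the main obstacle to be the second step: one must read the nested quantifiers of \ref{z3*}\ref{z3ii} precisely (it only guarantees one complement $H^c$ satisfying the strict inequality, and that is exactly what is used), and must observe that two applications of Lemma~\ref{lem:cyc_rank_identity}, to $A=Z$ and to $A=H$ with $C=\cyc(Z)$, collapse all of the $\mu_f$-terms to the single rank difference $\rho(Z)-\rho(H)$, so that no direct layer-by-layer analysis of $\mu_f$ is needed. A secondary point of care is that Lemmas~\ref{lem:cyc_rank_identity}, \ref{lem:mu_is_q-mod_on_cf}, \ref{lem:hyper_cycle}, \ref{lem:Z_ineq_switch} and Corollary~\ref{lem:cyc_flats_in_Z} are all being invoked for the $\mathcal{L}$-polymatroid $(\mathcal{L},\rho_{(\lambda,f)})$, which is legitimate precisely by Theorem~\ref{thm:axioms_imply_q-poly}, and that under this identification $\mu_{\rho_{(\lambda,f)}}=\mu_f$.
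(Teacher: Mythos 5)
Your proof is correct, and it takes a genuinely different route from the paper's. The paper verifies the two defining properties of a cyclic flat directly and by hand: for flatness it fixes an atom $a\nleq Z$, writes $\rho_{(\lambda,f)}(Z\vee a)$ via some $\tilde{Z}\in\mZ(Z\vee a)$, and runs a case analysis on how $\tilde{Z}$ sits relative to $Z$ (using \ref{z6}, Lemma~\ref{lem:Z_ineq_switch}, \ref{z3*}, and \ref{z1}\ref{z1ii}) to force $\rho_{(\lambda,f)}(Z)<\rho_{(\lambda,f)}(Z\vee a)$; for cyclicity it fixes $H\in\mH(Z)$, writes $\rho_{(\lambda,f)}(H)$ via some $\tilde{Z}\in\mZ(H)$, and uses Lemma~\ref{lem:Z_leq_flat}, Lemma~\ref{lem:Z_ineq_switch}, and \ref{z3*} together with another case analysis on $\tilde{Z}$ to extract the required atom. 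You instead reduce everything to the single identity $Z=\cyc(\cl(Z))$: step one shows $C:=\cyc(\cl(Z))\leq Z$ (hence $C=\cyc(Z)$) by combining Lemma~\ref{lem:Z_ineq_switch}, Lemma~\ref{lem:cyc_rank_identity}, the quasi-modularity Lemma~\ref{lem:mu_is_q-mod_on_cf}, and only the monotonicity clause of \ref{z3*}; step two derives $C=Z$ by contradiction, where two applications of Lemma~\ref{lem:cyc_rank_identity} (to $A=Z$ and $A=H$) reduce the $\mu_f$-terms of \ref{z3*}\ref{z3ii} to the rank difference $\rho(Z)-\rho(H)$. The central structural tool you rely on, Lemma~\ref{lem:cyc_rank_identity} for $(\mL,\rho_{(\lambda,f)})$, is not used in the paper's proof at all; what the two proofs share is Lemma~\ref{lem:Z_ineq_switch}, Proposition~\ref{prop:compdecomp}, and \ref{z3*}. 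Your route is more conceptual and avoids most of the paper's case splitting, at the modest cost of carrying the identification $\mu_{\rho_{(\lambda,f)}}=\mu_f$ (justified since $\rho_{(\lambda,f)}(a)=f(a)$ on atoms) and invoking the general $\mL$-polymatroid lemmas for $\mP$; the paper's route is more elementary in that it works directly from the definition of $\rho_{(\lambda,f)}$ as a minimum and never passes through the cyclic/closure operators.
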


\begin{proof}
    Let $Z\in\mathcal{Z}$. First, we show that $Z$ is a flat. Let $a\in\mathcal{L}$ be an atom not contained in $Z$. By Corollary~\ref{cor:lambdaZ_is_rhoZ}, we have $\rho_{(\lambda,f)}(Z)=\lambda(Z)$. By \ref{z1}, we may choose $\tilde{Z}\in\mZ(Z)$ such that 
    \[
    \rho_{(\lambda,f)}(Z\dot\vee a)=\lambda(\tilde{Z})+\mu_f\bigl((Z\dot\vee a)\wedge\tilde{Z}^c\bigr)
    \]
    for all $\tilde{Z}^c\in\cC(\tilde{Z};Z\dot\vee a)$.

    If $Z=\tilde{Z}$, then, as $a\nleq Z$ and $\tilde{Z}^c\in\mathbf{C}(\tilde{Z};Z\dot\vee a)$, we must have $(Z\dot\vee a)\wedge \tilde{Z}^c\neq\zero$, which meets (in $\mathcal{L}$) trivially with $0_\mathcal{Z}$ since $0_\mathcal{Z}\leq\tilde{Z}\in\mathcal{Z}$. Therefore, by \ref{z6}, we have $\mu_f((Z\dot\vee a)\wedge \tilde{Z}^c)>0$, from which we obtain $\rho_{(\lambda,f)}(Z)<\rho_{(\lambda,f)}(Z\dot\vee a)$.

    If $Z<\tilde{Z}$, then by \ref{z3*} we have $\lambda(Z)<\lambda(\tilde{Z})$, from which we obtain $\rho_{(\lambda,f)}(Z)<\rho_{(\lambda,f)}(Z\dot\vee a)$.
    Hence, we now suppose that $Z\nleq\tilde{Z}$. Recall that $\rho_{(\lambda,f)}(Z)=\lambda(Z)$. We observe the following:
    \begin{align}
        \lambda(Z)&\leq\lambda(Z\vee_\mathcal{Z}\tilde{Z})\quad \text{(by \ref{z3})}\label{ineq:Z_tildeZ_join}\\
        &\leq\lambda(\tilde{Z})+\mu_f\bigl(Z\wedge\tilde{Z}^c\bigr)\quad\text{(by Lemma~\ref{lem:Z_ineq_switch})}\nonumber\\
        &\leq\lambda(\tilde{Z})+\mu_f\bigl((Z\dot\vee a)\wedge\tilde{Z}^c\bigr)\label{ineq:tildeZ_q-mod}\\
        &=\rho_{(\lambda,f)}(Z\dot\vee a),\nonumber
    \end{align}
    where (\ref{ineq:tildeZ_q-mod}) holds since, by \ref{z1}\ref{z1ii} and Proposition~\ref{cor:mu_is_increasing}, $\mu_f$ is increasing on 
    \(
    [\zero,(Z\dot\vee a)\wedge\tilde{Z}^c].
    \)
    If $\tilde{Z}\nleq Z$, then $Z<Z\vee_\mathcal{Z}\tilde{Z}$, which means that (\ref{ineq:Z_tildeZ_join}) is a strict inequality by \ref{z3*}. 
    
    If $\tilde{Z}\leq Z$, then since $((Z\dot\vee a)\wedge\tilde{Z})\dot\vee(Z\wedge\tilde{Z}^c)\leq Z<Z\dot\vee a$, yet $\tilde{Z}^c\in\mathbf{C}(\tilde{Z};Z\dot\vee a)$, we must have $Z\wedge \tilde{Z}^c<(Z\dot\vee a)\wedge\tilde{Z}^c$, so (\ref{ineq:tildeZ_q-mod}) is a strict inequality by \ref{z1}\ref{z1ii} and \ref{z6}. It follows that $Z$ is a flat of $(\mathcal{L},\rho_{(\lambda,f)})$.
     
    We now show that $Z$ is cyclic. Let $H\in\mH(Z)$ and by \ref{z1} choose $\hat{Z}\in\mathcal{Z}(H)$ such that
    \[
    \rho_{(\lambda,f)}(H)=\lambda(\hat{Z})+\mu_f\bigl(H\wedge\hat{Z}^c\bigr)
    \]
    for all $\hat{Z}^c\in\cC(\hat{Z};H)$. If $\rho_{(\lambda,f)}(Z)=\rho_{(\lambda,f)}(H)$, then there is nothing to show, so assume that $\rho_{(\lambda,f)}(Z)>\rho_{(\lambda,f)}(H)$. Since $(\mathcal{L},\rho_{(\lambda,f)})$ is an $\mL$-polymatroid and $Z$ is a flat, we can deduce that $H$ is also a flat, because for all $a\in\mathcal{A}(\mathcal{L})\backslash\mathcal{A}(Z)$, the submodularity of $\rho_{(\lambda,f)}$ gives
    \[
    0<\rho_{(\lambda,f)}(Z\vee a)-\rho_{(\lambda,f)}(Z)\leq \rho_{(\lambda,f)}(H\vee a)-\rho_{(\lambda,f)}(H).
    \]
    Then, by Lemma~\ref{lem:Z_leq_flat}, we have that $\hat{Z}\leq H$ since $\hat{Z}\in\mathcal{Z}(H)$ and $H$ is a flat.

    Note that $\hat{Z}\leq H\lessdot Z$, which means that $H\in\mathcal{H}(Z)\cap[\hat{Z},Z]$. Therefore, axiom \ref{z3*} gives that for some $\hat{Z}^c\in\mathbf{C}(\hat{Z})$, we have
    \begin{equation}\label{eq:z4_Hproof}
        \lambda(Z)-\lambda(\hat{Z})<\mu_f(H\wedge \hat{Z}^c)+\mu_f(H^c\wedge Z)
    \end{equation}
    for some $H^c\in\mathbf{C}(H)$. By Corollary~\ref{cor:lambdaZ_is_rhoZ}, we have $\rho_{(\lambda,f)}(Z)=\lambda(Z)$. By Proposition~\ref{prop:compdecomp}, we have that $\mathbf{C}(\hat{Z})=\mathbf{C}(\hat{Z};H)$ since $\hat{Z}\leq H$. Therefore, (\ref{eq:z4_Hproof}) and the choice of $\hat{Z}$ give 
    $$\rho_{(\lambda,f)}(Z)=\lambda(Z)=\rho_{(\lambda,f)}(H)+\mu_f\bigl(H^c\wedge Z\bigr).$$
    This completes the proof that $Z$ is cyclic and so completes the proof of the lemma.
\end{proof}

The proof of Theorem~\ref{th:main} is now a consequence of the preceding results.

\begin{proof}[Proof of Theorem~\ref{th:main}]
    Since $(\mathcal{Z},\lambda,f)$ satisfies axioms \ref{z1}--\ref{z6}, then by Theorem~\ref{thm:axioms_imply_q-poly}, Corollary~\ref{lem:cyc_flats_in_Z}, and Lemma~\ref{lem:Z_are_cyc_flats}, we have that $(\mathcal{L},\rho_{(\lambda,f)})$ is an $\mL$-polymatroid with lattice of cyclic flats $\mathcal{Z}$.
\end{proof}

\begin{example}
    In Figure~\ref{fig:cyc_flat_example}, we present a simple example of a cover-weighted lattice $(\mathcal{Z},\lambda)$, where $\lambda(a)=3$ and $\lambda(b)=\lambda(c)=\lambda(d)=4$. We assign weights 
    \[
    f(x_1)=4,\quad f(x_2)=2,\quad f(x_3)=2,\quad f(x_4)=2,\quad f(x_5)=4,
    \]
    to the atoms in the ambient lattice $\mathcal{L}$ in which $\mathcal{Z}$ is embedded. Note that $\mathcal{L}$ is complemented and modular, but is neither a subspace lattice nor a Boolean lattice.

    \begin{figure}[h!]
        \begin{minipage}{0.45\textwidth}
            \begin{center}
            \scalebox{1.4}{
            \begin{tikzpicture}
                \node[] (0) at (0,0) {\tiny $\mathbf{0}$};
                \node[] (1) at (0,2) {\tiny $\mathbf{1}$};
                \node[] (i) at (-2,1) {\tiny $a$};
                \node[] (j) at (-1,1) {\tiny $b$};
                \node[] (l) at (1,1) {\tiny $c$};
                \node[] (m) at (2,1) {\tiny $d$};
                \path [-,cyan] (0) edge (i);
                \path [-,blue] (0) edge (j);
                \path [-,blue] (0) edge (l);
                \path [-,blue] (0) edge (m);
                \path [-,black] (1) edge (i);
                \path [-,red] (1) edge (j);
                \path [-,red] (1) edge (l);
                \path [-,red] (1) edge (m);
            \end{tikzpicture}}
            \end{center}
        \end{minipage}
        \begin{minipage}{0.5\textwidth}
            \begin{center}
            \scalebox{1.4}{
            \begin{tikzpicture}
                \node[] (0) at (0,0) {\tiny $\mathbf{0}$};
                \node[] (1) at (0,3) {\tiny $\mathbf{1}$};
                \node[] (b) at (-2,1) {\tiny $x_1$};
                \node[] (c) at (-1,1) {\tiny $x_2$};
                \node[] (d) at (0,1) {\tiny $x_3$};
                \node[] (e) at (1,1) {\tiny $x_4$};
                \node[] (f) at (2,1) {\tiny $x_5$};
                \node[] (i) at (-2,2) {\tiny $a$};
                \node[] (j) at (-1,2) {\tiny $b$};
                \node[] (k) at (0,2) {\tiny $c$};
                \node[] (l) at (1,2) {\tiny $d$};
                \node[] (m) at (2,2) {\tiny $e$};
                \path [-,cyan] (0) edge (b);
                \path [-,black] (0) edge (c);
                \path [-,black] (0) edge (d);
                \path [-,black] (0) edge (e);
                \path [-,cyan] (0) edge (f);
                \path [-,lightgray] (1) edge (i);
                \path [-,lightgray] (1) edge (j);
                \path [-,lightgray] (1) edge (k);
                \path [-,lightgray] (1) edge (l);
                \path [-,lightgray] (1) edge (m);
                \path [-,lightgray] (b) edge (j);
                \path [-,lightgray] (b) edge (k);
                \path [-,lightgray] (c) edge (i);
                \path [-,lightgray] (c) edge (k);
                \path [-,lightgray] (d) edge (i);
                \path [-,lightgray] (d) edge (j);
                \path [-,lightgray] (d) edge (l);
                \path [-,lightgray] (d) edge (m);
                \path [-,lightgray] (e) edge (k);
                \path [-,lightgray] (e) edge (m);
                \path [-,lightgray] (f) edge (k);
                \path [-,lightgray] (f) edge (l);
            \end{tikzpicture}}
            \end{center}
        \end{minipage}
        \caption{On the left is the weighted lattice $(\mathcal{Z},\lambda)$. On the right is the lattice $\mL$, whose atoms are weighted by $f$. The triple $(\mZ,\lambda,f)$ satisfies \ref{z1}--\ref{z6}. (\textcolor{red}{1}, 2, \textcolor{cyan}{3}, \textcolor{blue}{4})}
        \label{fig:cyc_flat_example}
    \end{figure}

    It is easily verified that the construction in Figure~\ref{fig:cyc_flat_example} satisfies the cyclic flat axioms. Therefore, we can construct a unique $\mL$-polymatroid $(\mathcal{L},\rho_{(\lambda,f)})$ such that $(\mathcal{Z},\lambda,f)$ is its weighted lattice of cyclic flats. The $\mL$-polymatroid $(\mathcal{L},\rho_{(\lambda,f)})$ is shown in Figure~\ref{fig:cyc_flat_example_final}. 

    \begin{figure}[h]
        \centering
        \scalebox{1.6}{
        \begin{tikzpicture}
            \node[] (0) at (0,0) {\tiny $\mathbf{0}$};
            \node[] (1) at (0,3) {\tiny $\mathbf{1}$};
            \node[] (b) at (-2,1) {\tiny $x_1$};
            \node[] (c) at (-1,1) {\tiny $x_2$};
            \node[] (d) at (0,1) {\tiny $x_3$};
            \node[] (e) at (1,1) {\tiny $x_4$};
            \node[] (f) at (2,1) {\tiny $x_5$};
            \node[] (i) at (-2,2) {\tiny $a$};
            \node[] (j) at (-1,2) {\tiny $b$};
            \node[] (k) at (0,2) {\tiny $c$};
            \node[] (l) at (1,2) {\tiny $d$};
            \node[] (m) at (2,2) {\tiny $e$};
            \path [-,cyan] (0) edge (b);
            \path [-,black] (0) edge (c);
            \path [-,black] (0) edge (d);
            \path [-,black] (0) edge (e);
            \path [-,cyan] (0) edge (f);
            \path [-,black] (1) edge (i);
            \path [-,red] (1) edge (j);
            \path [-,red] (1) edge (k);
            \path [-,red] (1) edge (l);
            \path [-,red] (1) edge (m);
            \path [-,red] (b) edge (j);
            \path [-,red] (b) edge (k);
            \path [-,red] (c) edge (i);
            \path [-,black] (c) edge (k);
            \path [-,red] (d) edge (i);
            \path [-,black] (d) edge (j);
            \path [-,black] (d) edge (l);
            \path [-,black] (d) edge (m);
            \path [-,black] (e) edge (k);
            \path [-,black] (e) edge (m);
            \path [-,red] (f) edge (k);
            \path [-,red] (f) edge (l);
        \end{tikzpicture}}
        \caption{A representation of the $\mathcal{L}$-polymatroid $(\mathcal{L},\rho_{(\lambda,f)})$ whose lattice of cyclic flats is $(\mathcal{Z},\lambda,f)$, as shown in Figure~\ref{fig:cyc_flat_example}. (\textcolor{red}{1}, 2, \textcolor{cyan}{3}, \textcolor{blue}{4})}
        \label{fig:cyc_flat_example_final}
    \end{figure}
\end{example}

\section{Final Comments}\label{sec:comments}

In this final section, we comment on the cyclic flat axioms given in Definition~\ref{def:axioms} and compare them with the existing cyclic flat axioms in the literature.

\begin{remark}
    We discuss the similarities and differences between axioms \ref{z1}--\ref{z6} presented here and those in \cite[Section 3.1]{Csirm20}, which apply to polymatroids.
    \begin{itemize}
        \item Axiom \ref{z1} is vacuously satisfied in the case that $\mL$ is the Boolean lattice, since in that case complements are unique. This immediately implies that \ref{z1}\ref{z1i} holds, while \ref{z1}\ref{z1ii} also holds because $\mu_f$ is a valuation on $\mL$.
        \item In the case that $\mL$ is the Boolean lattice, $\mu_f$ is increasing on $\mL$, and hence axiom \ref{z2} can be replaced by the following:
        \[
          \lambda(Z_1)+\lambda(Z_2)\geq\lambda(Z_1\wedge_\mathcal{Z} Z_2)+\lambda(Z_1\vee_\mathcal{Z} Z_2)+\mu_f\Bigl( Z_1\wedge Z_2\wedge\Bigl(Z_1\wedge_\mathcal{Z} Z_2\Bigr)^c\Bigr),
        \]
        for any $Z_1,Z_2 \in \mZ$ (complements are unique in a Boolean lattice).
        \item As complements are unique in a Boolean lattice, along with the function $\mu_f$ being a valuation on a Boolean lattice, we have that \ref{z3*} coincides with one of the cyclic flat axioms found in \cite{Csirm20}.
    \end{itemize}
\end{remark}

\begin{remark}
    We now comment on some similarities and differences between axioms \ref{z1}--\ref{z6} and those provided in \cite[Definition 3.1]{AlfByr22}, which apply to $q$-matroids. Recall from Remark~\ref{rem:q-mat_mu} that if $(\mathcal{L},r)$ is a $q$-matroid and $X\wedge\textup{cl}(\zero)=\zero,$
    then $\mu_r(X)=\textup{dim}(X)$. For any cyclic flat $Z\in\mathcal{L}$, we have $\textup{cl}(\zero)\leq Z$. This means that $\mu_r(X)=\textup{dim}(X)$ for any $X\leq Z^c$, for any $Z^c\in\textup{\textbf{C}}(Z)$. Since in axioms \ref{z1}--\ref{z6}, $\mu_f$ is applied only to lattice elements contained in complements of cyclic flats, the dimension function is used in \cite{AlfByr22} instead of $\mu_f$. For the remainder of this remark, we set $\mu_f(\cdot)=\dim(\cdot).$
    \begin{itemize}
        \item Let $Z\in\mathcal{Z}$. Since $\textup{dim}(Z^c)=\textup{dim}(\one)-\textup{dim}(Z)$
        for any $Z^c\in\textup{\textbf{C}}(Z)$, axiom \ref{z1}\ref{z1i} is satisfied. Moreover, since $\textup{dim}$ is a valuation, axiom \ref{z1}\ref{z1ii} is also satisfied.
        \item Since $\dim(\cdot)$ is an increasing function, axiom \ref{z2} becomes equivalent to \cite[(Z3)]{AlfByr22}.
        \item Since $\dim(\cdot)$ is a valuation, axiom \ref{z3*} becomes equivalent to \cite[(Z2)]{AlfByr22}.
        \item Since $\textup{dim}(e)=1>0$ for any $e\in\mA(\mathcal{L})$, axiom \ref{z6} is implicitly included in \cite{AlfByr22}.
    \end{itemize}
\end{remark}

To provide some intuition about the role of \ref{z1}, we now present examples showing that it is not a consequence of the remaining axioms.
\begin{example}   
    Consider the lattice $\mathcal{L}(\mathbb{F}_2^3)$ with the atomic weights as shown in Figure~\ref{fig:z1ii_counter}. Let 
    \[
    \mathcal{Z}_1=\{\langle e_1 \rangle\} \quad \text{and} \quad \mathcal{Z}_2 = \{\langle e_1 \rangle,\,\F_2^3\}.
    \]
    It is easy to see that $\mathcal{Z}_2$ satisfies all of the cyclic flat axioms, while $\mathcal{Z}_1$ satisfies all the axioms except for \ref{z1}\ref{z1ii}. This shows that \ref{z1}\ref{z1ii} is not implied by the remaining axioms.

    We remark further that by the cover-weight axioms \ref{cw1}--\ref{cw2}, one can deduce the remaining cover-weights solely from the atomic weights of $\mathcal{L}(\mathbb{F}_2^3)$. It can then be checked that the resulting $q$-polymatroid has $\mathcal{Z}_2$ as its set of cyclic flats.
    
    \begin{figure}[h]
        \centering
        \begin{tabular}{cc}
          \scalebox{1.1}{
\begin{tikzpicture}
            \node[] (0) at (0,0) {\tiny $0$};
            \node[] (1) at (0,3) {\tiny $\mathbb{F}_2^3$};
            \node[] (a) at (-3,1) {\tiny $100$};
            \node[] (b) at (-2,1) {\tiny $010$};
            \node[] (c) at (-1,1) {\tiny $110$};
            \node[] (d) at (0,1) {\tiny $111$};
            \node[] (e) at (1,1) {\tiny $011$};
            \node[] (f) at (2,1) {\tiny $001$};
            \node[] (g) at (3,1) {\tiny $101$};
            \node[] (h) at (-3,2) {\tiny $\begin{smallmatrix}100\\010\end{smallmatrix}$};
            \node[] (i) at (-2,2) {\tiny $\begin{smallmatrix}100\\011\end{smallmatrix}$};
            \node[] (j) at (-1,2) {\tiny $\begin{smallmatrix}100\\001\end{smallmatrix}$};
            \node[] (k) at (0,2) {\tiny $\begin{smallmatrix}010\\001\end{smallmatrix}$};
            \node[] (l) at (1,2) {\tiny $\begin{smallmatrix}101\\010\end{smallmatrix}$};
            \node[] (m) at (2,2) {\tiny $\begin{smallmatrix}101\\011\end{smallmatrix}$};
            \node[] (n) at (3,2) {\tiny $\begin{smallmatrix}110\\001\end{smallmatrix}$};
            \path [-,green] (0) edge (a);
            \path [-,red] (0) edge (b);
            \path [-,red] (0) edge (c);
            \path [-,black] (0) edge (d);
            \path [-,black] (0) edge (e);
            \path [-,cyan] (0) edge (f);
            \path [-,cyan] (0) edge (g);
            \path [-,lightgray] (1) edge (h);
            \path [-,lightgray] (1) edge (i);
            \path [-,lightgray] (1) edge (j);
            \path [-,lightgray] (1) edge (k);
            \path [-,lightgray] (1) edge (l);
            \path [-,lightgray] (1) edge (m);
            \path [-,lightgray] (1) edge (n);
            \path [-,lightgray] (a) edge (h);
            \path [-,lightgray] (a) edge (i);
            \path [-,lightgray] (a) edge (j);
            \path [-,lightgray] (b) edge (h);
            \path [-,lightgray] (b) edge (k);
            \path [-,lightgray] (b) edge (l);
            \path [-,lightgray] (c) edge (h);
            \path [-,lightgray] (c) edge (m);
            \path [-,lightgray] (c) edge (n);
            \path [-,lightgray] (d) edge (i);
            \path [-,lightgray] (d) edge (l);
            \path [-,lightgray] (d) edge (n);
            \path [-,lightgray] (e) edge (i);
            \path [-,lightgray] (e) edge (k);
            \path [-,lightgray] (e) edge (m);
            \path [-,lightgray] (f) edge (j);
            \path [-,lightgray] (f) edge (k);
            \path [-,lightgray] (f) edge (n);
            \path [-,lightgray] (g) edge (j);
            \path [-,lightgray] (g) edge (l);
            \path [-,lightgray] (g) edge (m);
        \end{tikzpicture}}   & \scalebox{1.1}{
\begin{tikzpicture}
            \node[] (0) at (0,0) {\tiny $0$};
            \node[] (1) at (0,3) {\tiny $\mathbb{F}_2^3$};
            \node[] (a) at (-3,1) {\tiny $100$};
            \node[] (b) at (-2,1) {\tiny $010$};
            \node[] (c) at (-1,1) {\tiny $110$};
            \node[] (d) at (0,1) {\tiny $111$};
            \node[] (e) at (1,1) {\tiny $011$};
            \node[] (f) at (2,1) {\tiny $001$};
            \node[] (g) at (3,1) {\tiny $101$};
            \node[] (h) at (-3,2) {\tiny $\begin{smallmatrix}100\\010\end{smallmatrix}$};
            \node[] (i) at (-2,2) {\tiny $\begin{smallmatrix}100\\011\end{smallmatrix}$};
            \node[] (j) at (-1,2) {\tiny $\begin{smallmatrix}100\\001\end{smallmatrix}$};
            \node[] (k) at (0,2) {\tiny $\begin{smallmatrix}010\\001\end{smallmatrix}$};
            \node[] (l) at (1,2) {\tiny $\begin{smallmatrix}101\\010\end{smallmatrix}$};
            \node[] (m) at (2,2) {\tiny $\begin{smallmatrix}101\\011\end{smallmatrix}$};
            \node[] (n) at (3,2) {\tiny $\begin{smallmatrix}110\\001\end{smallmatrix}$};
            \path [-,green] (0) edge (a);
            \path [-,red] (0) edge (b);
            \path [-,red] (0) edge (c);
            \path [-,black] (0) edge (d);
            \path [-,black] (0) edge (e);
            \path [-,cyan] (0) edge (f);
            \path [-,cyan] (0) edge (g);
            \path [-,black] (1) edge (h);
            \path [-,red] (1) edge (i);
            \path [-,green] (1) edge (j);
            \path [-,green] (1) edge (k);
            \path [-,green] (1) edge (l);
            \path [-,green] (1) edge (m);
            \path [-,green] (1) edge (n);
            \path [-,red] (a) edge (h);
            \path [-,black] (a) edge (i);
            \path [-,cyan] (a) edge (j);
            \path [-,green] (b) edge (h);
            \path [-,black] (b) edge (k);
            \path [-,black] (b) edge (l);
            \path [-,green] (c) edge (h);
            \path [-,black] (c) edge (m);
            \path [-,black] (c) edge (n);
            \path [-,green] (d) edge (i);
            \path [-,red] (d) edge (l);
            \path [-,red] (d) edge (n);
            \path [-,green] (e) edge (i);
            \path [-,red] (e) edge (k);
            \path [-,red] (e) edge (m);
            \path [-,green] (f) edge (j);
            \path [-,green] (f) edge (k);
            \path [-,green] (f) edge (n);
            \path [-,green] (g) edge (j);
            \path [-,green] (g) edge (l);
            \path [-,green] (g) edge (m);
        \end{tikzpicture}} \\
        \end{tabular}
        \caption{A set of atomic weights (\textcolor{green}{0},\textcolor{red}{1}, 2, \textcolor{cyan}{3}) in $\mathcal{L}(\mathbb{F}_2^3)$ and the corresponding $q$-polymatroid.}
        \label{fig:z1ii_counter}
    \end{figure}
    
    Now consider the same lattice $\mathcal{L}(\F_2^3)$ but with the atomic weights as shown in Figure~\ref{fig:z1ii_counter_alt}. For both $\mathcal{Z}_1$ and $\mathcal{Z}_2$, all of the cyclic flat axioms except for \ref{z1}\ref{z1ii} are satisfied. In fact, by inspecting the interval $[0,\langle e_2,e_3\rangle]$, we see that the interval $[\langle e_2+e_3\rangle,\langle e_2,e_3\rangle]$ must have a cover weight of at least two, which exceeds the cover weight of $[0,\langle e_2\rangle]$. This contradicts the cover-weight axioms, implying that there is no $q$-polymatroid with this set of atomic weights. Therefore, there is no lattice of cyclic flats that satisfies all of the cyclic flat axioms with this set of atomic weights.
    
    \begin{figure}[h]
        \centering
        \scalebox{1.5}{
\begin{tikzpicture}
            \node[] (0) at (0,0) {\tiny $0$};
            \node[] (1) at (0,3) {\tiny $\mathbb{F}_2^3$};
            \node[] (a) at (-3,1) {\tiny $100$};
            \node[] (b) at (-2,1) {\tiny $010$};
            \node[] (c) at (-1,1) {\tiny $110$};
            \node[] (d) at (0,1) {\tiny $111$};
            \node[] (e) at (1,1) {\tiny $011$};
            \node[] (f) at (2,1) {\tiny $001$};
            \node[] (g) at (3,1) {\tiny $101$};
            \node[] (h) at (-3,2) {\tiny $\begin{smallmatrix}100\\010\end{smallmatrix}$};
            \node[] (i) at (-2,2) {\tiny $\begin{smallmatrix}100\\011\end{smallmatrix}$};
            \node[] (j) at (-1,2) {\tiny $\begin{smallmatrix}100\\001\end{smallmatrix}$};
            \node[] (k) at (0,2) {\tiny $\begin{smallmatrix}010\\001\end{smallmatrix}$};
            \node[] (l) at (1,2) {\tiny $\begin{smallmatrix}101\\010\end{smallmatrix}$};
            \node[] (m) at (2,2) {\tiny $\begin{smallmatrix}101\\011\end{smallmatrix}$};
            \node[] (n) at (3,2) {\tiny $\begin{smallmatrix}110\\001\end{smallmatrix}$};
            \path [-,green] (0) edge (a);
            \path [-,red] (0) edge (b);
            \path [-,red] (0) edge (c);
            \path [-,black] (0) edge (d);
            \path [-,black] (0) edge (e);
            \path [-,blue] (0) edge (f);
            \path [-,blue] (0) edge (g);
            \path [-,lightgray] (1) edge (h);
            \path [-,lightgray] (1) edge (i);
            \path [-,lightgray] (1) edge (j);
            \path [-,lightgray] (1) edge (k);
            \path [-,lightgray] (1) edge (l);
            \path [-,lightgray] (1) edge (m);
            \path [-,lightgray] (1) edge (n);
            \path [-,lightgray] (a) edge (h);
            \path [-,lightgray] (a) edge (i);
            \path [-,lightgray] (a) edge (j);
            \path [-,lightgray] (b) edge (h);
            \path [-,lightgray] (b) edge (k);
            \path [-,lightgray] (b) edge (l);
            \path [-,lightgray] (c) edge (h);
            \path [-,lightgray] (c) edge (m);
            \path [-,lightgray] (c) edge (n);
            \path [-,lightgray] (d) edge (i);
            \path [-,lightgray] (d) edge (l);
            \path [-,lightgray] (d) edge (n);
            \path [-,lightgray] (e) edge (i);
            \path [-,lightgray] (e) edge (k);
            \path [-,lightgray] (e) edge (m);
            \path [-,lightgray] (f) edge (j);
            \path [-,lightgray] (f) edge (k);
            \path [-,lightgray] (f) edge (n);
            \path [-,lightgray] (g) edge (j);
            \path [-,lightgray] (g) edge (l);
            \path [-,lightgray] (g) edge (m);
        \end{tikzpicture}}
        \caption{A set of atomic weights (\textcolor{green}{0},\textcolor{red}{1},2,\textcolor{blue}{4}) in the lattice $\mathcal{L}(\mathbb{F}_2^3)$.}
        \label{fig:z1ii_counter_alt}
    \end{figure}
\end{example}

\begin{example}
    In the lattice $\mathcal{L}(\mathbb{F}_2^2)$, suppose that $\mathcal{Z}=\{\langle e_1\rangle\}$
    and that the atomic weights are given as in Figure~\ref{fig:z1i_counter}. Thus, all of the cyclic flat axioms except for \ref{z1}\ref{z1i} are satisfied. (By inspection, it is clear that a $q$-polymatroid with such a lattice and set of atomic weights is impossible.)
    
    \begin{figure}[h]
        \centering
        \scalebox{1.5}{
        \begin{tikzpicture}
            \node[] (0) at (0,0) {\tiny $0$};
            \node[] (1) at (0,2) {\tiny $\mathbb{F}_2^2$};
            \node[] (i) at (-1,1) {\tiny $10$};
            \node[] (j) at (0,1) {\tiny $01$};
            \node[] (m) at (1,1) {\tiny $11$};
            \path [-,green] (0) edge (i);
            \path [-,red] (0) edge (j);
            \path [-,black] (0) edge (m);
            \path [-,lightgray] (1) edge (i);
            \path [-,lightgray] (1) edge (j);
            \path [-,lightgray] (1) edge (m);
        \end{tikzpicture}}
        \caption{A set of atomic weights (\textcolor{green}{0}, \textcolor{red}{1}, 2) in the lattice $\mathcal{L}(\mathbb{F}_2^2)$.}
        \label{fig:z1i_counter}
    \end{figure}
\end{example}

\section*{Acknowledgement}
This publication has emanated from research
conducted with the financial support of Science Foundation Ireland under grant
number 18/CRT/6049.

\section*{Data Availability Statement}
Data sharing not applicable to this article as no datasets were generated or analysed during the current study.

\section*{Conflict of Interest Statement}
On behalf of all authors, the corresponding author states that there is no conflict of interest.

\newpage
\bibliographystyle{abbrv}
\bibliography{References} 

@misc{BCJ17,
    title={{The Tutte $q$-Polynomial}},
    author={Guus Bollen and Henry Crapo and Relinde Jurrius},
    year={2017},
    howpublished={\url{https://arxiv.org/abs/1707.03459}},
   }

@article{BF22,
  title   = {Invariants of Tutte partitions and a $q$-analogue},
  author  = {Byrne, Eimear and Fulcher, Andrew},
  journal = {J. Comb. Theory Ser. B},
  volume  = {172},
  pages   = {1--43},
  year    = {2025},
  publisher = {Elsevier},
  doi     = {10.1016/j.jctb.2024.12.002},
  url     = {https://doi.org/10.1016/j.jctb.2024.12.002},
  note    = {doi: 10.1016/j.jctb.2024.12.002}
}

@article{AlfByr22,
  title     = {The cyclic flats of a \( q \)-matroid},
  author    = {Alfarano, Gianira N. and Byrne, Eimear},
  journal   = {J. Algebr. Comb.},
  volume    = {60},
  pages     = {97--126},
  year      = {2024},
  doi       = {10.1007/s10801-024-01321-2},
  url       = {https://doi.org/10.1007/s10801-024-01321-2},
  note      = {doi: 10.1007/s10801-024-01321-2}
}

@misc{GorlaSalizz_Latroid,
    title={Latroids and code invariants},
    author={Elisa Gorla and Flavio Salizzoni},
    year={2025},
    howpublished={\url{https://www.arxiv.org/abs/2503.03010}},
    }

@article{Vertigan2004Latroids,
  author  = {Vertigan, Dirk},
  title   = {Latroids and Their Representation by Codes over Modules},
  journal = {Trans. Amer. Math. Soc.},
  year    = {2004},
  volume  = {356},
  number  = {10},
  pages   = {3841--3868},
  doi     = {10.1090/S0002-9947-03-03367-1},
  url     = {https://doi.org/10.1090/S0002-9947-03-03367-1},
  note    = {doi: 10.1090/S0002-9947-03-03367-1}
}

@article{Csirm20,
  title   = {Cyclic Flats of a Polymatroid},
  author  = {Csirmaz, L{\'a}szl{\'o}},
  journal = {Ann. Comb.},
  volume  = {24},
  pages   = {637--648},
  year    = {2020},
  doi     = {10.1007/s00026-020-00506-3},
  url     = {https://doi.org/10.1007/s00026-020-00506-3},
  note    = {doi: 10.1007/s00026-020-00506-3}
}

@article{fghw,
  title   = {Cyclic flats of binary matroids},
  author  = {Freij-Hollanti, Ragnar and Grezet, Matthias and Hollanti, Camilla and Westerb{\"a}ck, Thomas},
  journal = {Adv. Appl. Math.},
  volume  = {127},
  pages   = {102165},
  year    = {2021},
  doi     = {10.1016/j.aam.2021.102165},
  url     = {https://doi.org/10.1016/j.aam.2021.102165},
  note    = {doi: 10.1016/j.aam.2021.102165}
}

@InProceedings{mason,
author="Mason, J. H.",
title="A characterization of transversal independence spaces",
booktitle="Th{\'e}orie des Matro{\"i}des",
year="1971",
publisher="Springer Berlin Heidelberg",
address="Berlin, Heidelberg",
pages="86--94",
isbn="978-3-540-36877-9"
}

@InProceedings{ingleton,
author="Ingleton, A. W.",
editor="Aigner, Martin",
title="Transversal Matroids and Related Structures",
booktitle="Higher Combinatorics",
year="1977",
publisher="Springer Netherlands",
address="Dordrecht",
pages="117--131",
isbn="978-94-010-1220-1"
}

@InProceedings{Dunstan,
author="Dunstan, F.D.J. and A.W. Ingleton and Welsh, D.J.A.",
editor="",
title="Supermatroids",
booktitle="Proc. Conf. Comb. Math",
year="1972",
publisher="Math.
Inst.",
address="Oxford",
pages="72--122",
isbn=""
}

@article{tardos_supermat,
  author  = {Tardos, {\'E}va},
  title   = {An intersection theorem for supermatroids},
  journal = {J. Comb. Theory Ser. B},
  volume  = {50},
  number  = {2},
  pages   = {150--159},
  year    = {1990},
  doi     = {10.1016/0095-8956(90)90070-G},
  url     = {https://doi.org/10.1016/0095-8956(90)90070-G},
  note    = {doi: 10.1016/0095-8956(90)90070-G}
}

@article{WILD_supermat,
  author  = {Wild, Marcel},
  title   = {Weakly submodular rank functions, supermatroids, and the flat lattice of a distributive supermatroid},
  journal = {Discrete Math.},
  volume  = {308},
  number  = {7},
  pages   = {999--1017},
  year    = {2008},
  issn    = {0012-365X},
  doi     = {10.1016/j.disc.2007.03.023},
  url     = {https://doi.org/10.1016/j.disc.2007.03.023},
  note    = {doi: 10.1016/j.disc.2007.03.023}
}

@article{MAEHARA_supermat,
  title   = {Rank axiom of modular supermatroids: A connection with directional {DR} submodular functions},
  author  = {Maehara, Takanori and Nakashima, So},
  journal = {Adv. Appl. Math.},
  volume  = {134},
  pages   = {102304},
  year    = {2022},
  issn    = {0196-8858},
  doi     = {10.1016/j.aam.2021.102304},
  url     = {https://doi.org/10.1016/j.aam.2021.102304},
  note    = {doi: 10.1016/j.aam.2021.102304}
}

@article{bcij,
  title   = {Weighted Subspace Designs from $q$-Polymatroids},
  author  = {Byrne, Eimear and Ceria, Michela and Ionica, Sorina and Jurrius, Relinde},
  journal = {J. Comb. Theory Ser. A},
  volume  = {201},
  pages   = {105799},
  year    = {2024},
  issn    = {0097-3165},
  doi     = {10.1016/j.jcta.2023.105799},
  url     = {https://doi.org/10.1016/j.jcta.2023.105799},
  note    = {doi: 10.1016/j.jcta.2023.105799}
}

@article{sims,
  author  = {Sims, Julie A.},
  title   = {An Extension of {D}ilworth's Theorem},
  journal = {J. Lond. Math. Soc.},
  volume  = {s2-16},
  number  = {3},
  pages   = {393--396},
  year    = {1977},
  doi     = {10.1112/jlms/s2-16.3.393},
  url     = {https://doi.org/10.1112/jlms/s2-16.3.393},
  note    = {doi: 10.1112/jlms/s2-16.3.393}
}

@article{gpr_q_complex,
  author  = {Ghorpade, Sudhir R. and Pratihar, Rakhi and Randrianarisoa, Tovohery H.},
  title   = {Shellability and homology of $q$-complexes and $q$-matroids},
  journal = {J. Algebr. Comb.},
  volume  = {56},
  issue   = {4},
  pages   = {1135--1162},
  year    = {2022},
  doi     = {10.1007/s10801-022-01150-1},
  url     = {https://doi.org/10.1007/s10801-022-01150-1},
  note    = {doi: 10.1007/s10801-022-01150-1}
}

@article{bonin_kung_Ginv,
  title   = {The {$\mathcal{G}$}-invariant and catenary data of a matroid},
  author  = {Joseph E. Bonin and Joseph P.S. Kung},
  journal = {Adv. Appl. Math.},
  volume  = {94},
  pages   = {39--70},
  year    = {2018},
  doi     = {10.1016/j.aam.2017.03.001},
  url     = {https://doi.org/10.1016/j.aam.2017.03.001},
  note    = {doi: 10.1016/j.aam.2017.03.001}
}

@article{bonin_Ginv,
  title   = {Matroids with different configurations and the same {$\mathcal{G}$}-invariant},
  author  = {Joseph E. Bonin},
  journal = {J. Comb. Theory Ser. A},
  volume  = {190},
  pages   = {105637},
  year    = {2022},
  issn    = {0097-3165},
  doi     = {10.1016/j.jcta.2022.105637},
  url     = {https://doi.org/10.1016/j.jcta.2022.105637},
  note    = {doi: 10.1016/j.jcta.2022.105637}
}

@article{bkdm,
  author  = {Bonin, Joseph E. and Kung, Joseph P. S. and de Mier, Anna},
  title   = {Characterizations of Transversal and Fundamental Transversal Matroids},
  journal = {Electron. J. Comb.},
  volume  = {18},
  number  = {1},
  pages   = {Paper P106, 16 pp.},
  year    = {2011},
  doi     = {10.37236/593},
  url     = {https://doi.org/10.37236/593},
  note    = {doi: 10.37236/593}
}

@article{faigle,
  author  = {Faigle, Ulrich},
  title   = {Geometries on partially ordered sets},
  journal = {J. Comb. Theory Ser. B},
  volume  = {28},
  number  = {1},
  pages   = {26--51},
  year    = {1980},
  issn    = {0095-8956},
  doi     = {10.1016/0095-8956(80)90054-4},
  url     = {https://doi.org/10.1016/0095-8956(80)90054-4},
  note    = {doi: 10.1016/0095-8956(80)90054-4}
}

@article{bryl_transv,
  author  = {Brylawski, Thomas H.},
  title   = {An Affine Representation for Transversal Geometries},
  journal = {Stud. Appl. Math.},
  volume  = {54},
  number  = {2},
  pages   = {143--160},
  year    = {1975},
  doi     = {10.1002/sapm1975542143},
  url     = {https://doi.org/10.1002/sapm1975542143},
  note    = {doi: 10.1002/sapm1975542143}
}

@article{eberhardt,
  author  = {Eberhardt, Jens Niklas},
  title   = {Computing the {T}utte polynomial of a matroid from its lattice of cyclic flats},
  journal = {Electron. J. Combin.},
  volume  = {21},
  number  = {3},
  pages   = {Paper P3.47, 15 pp.},
  year    = {2014},
  doi     = {10.37236/4610},
  url     = {https://doi.org/10.37236/4610},
  note    = {doi: 10.37236/4610}
}

@article{bonin_demier, 
title={The Lattice of Cyclic Flats of a Matroid}, 
volume={12}, 
DOI={10.1007/s00026-008-0344-3}, 
number={2}, 
journal={Ann. Comb.}, 
publisher={Springer}, 
author={Bonin, Joseph E. and de Mier, Anna}, 
year={2008}, 
pages={155–170},
note={doi: 10.1007/s00026-008-0344-3}
}

@book{romanlattices,
  title={Lattices and Ordered Sets},
  author={Roman, Steven},
  year={2008},
  publisher={Springer}
}

@article{gorla2019rank,
  title   = {Rank-metric codes and $q$-polymatroids},
  author  = {Gorla, Elisa and Jurrius, Relinde and L{\'o}pez, Hiram H. and Ravagnani, Alberto},
  journal = {J. Algebr. Comb.},
  volume  = {52},
  pages   = {1--19},
  year    = {2020},
  doi     = {10.1007/s10801-019-00889-4},
  url     = {https://doi.org/10.1007/s10801-019-00889-4},
  note    = {doi: 10.1007/s10801-019-00889-4}
}

@article{JP18,
  title   = {Defining the $q$-analogue of a matroid},
  author  = {Jurrius, Relinde and Pellikaan, Ruud},
  journal = {Electron. J. Combin.},
  volume  = {25},
  number  = {3},
  pages   = {Paper P3.2, 32 pp.},
  year    = {2018},
  doi     = {10.37236/6569},
  url     = {https://doi.org/10.37236/6569},
  note    = {doi: 10.37236/6569}
}

@article{whittle,
  author  = {Whittle, Geoff},
  title   = {Characteristic Polynomials of Weighted Lattices},
  journal = {Adv. Math.},
  volume  = {99},
  pages   = {125--151},
  year    = {1993},
  doi     = {10.1006/aima.1993.1021},
  url     = {https://doi.org/10.1006/aima.1993.1021},
  note    = {doi: 10.1006/aima.1993.1021}
}

@article{shiromoto19,
  title   = {Codes with the rank metric and matroids},
  author  = {Shiromoto, Keisuke},
  journal = {Designs Codes Cryptogr.},
  volume  = {87},
  number  = {8},
  pages   = {1765--1776},
  year    = {2019},
  doi     = {10.1007/s10623-018-0576-0},
  url     = {https://doi.org/10.1007/s10623-018-0576-0},
  note    = {doi: 10.1007/s10623-018-0576-0},
  eprint  = {arXiv:1803.06041}
}

@article{BCJ_JCTB,
  title   = {Constructions of New $q$-Cryptomorphisms},
  author  = {Byrne, Eimear and Ceria, Michela and Jurrius, Relinde},
  journal = {J. Comb. Theory Ser. B},
  volume  = {153},
  pages   = {149--194},
  year    = {2022},
  publisher = {Elsevier},
  doi     = {10.1016/j.jctb.2021.12.001},
  url     = {https://doi.org/10.1016/j.jctb.2021.12.001},
  note    = {doi: 10.1016/j.jctb.2021.12.001},
  eprint  = {arXiv:2104.01486}
}

@book{gratz_lattice,
 title={Lattice Theory: Foundation},
 author={Gr{\"a}tzer, G.},
 publisher={Birkh{\"a}user},
 year={2011},
}

@book{welsh,
 title={Matroid Theory},
 author={Welsh, D.J.A. },
 publisher={L.M.S Monographs},
 year={1976},
}

@article{CJ22,
  title   = {The direct sum of $q$-matroids},
  author  = {Ceria, Michela and Jurrius, Relinde},
  journal = {J. Algebr. Comb.},
  volume  = {59},
  pages   = {291--330},
  year    = {2024},
  doi     = {10.1007/s10801-023-01283-x},
  url     = {https://doi.org/10.1007/s10801-023-01283-x},
  note    = {doi: 10.1007/s10801-023-01283-x}
}

@book{birkhoff_lattice,
 title={Lattice Theory},
 author={Garrett Birkhoff},
 publisher={American Mathematical Society},
 year={1973},
 edition={Third}
}

@misc{alfarano2023critical,
      title={The Critical Theorem for $q$-Polymatroids}, 
      author={Gianira N. Alfarano and Eimear Byrne},
      year={2023},
      eprint={2305.07567},
      archivePrefix={arXiv},
      primaryClass={math.CO},
      howpublished={\url{https://arxiv.org/abs/2305.07567v2}}
}

\end{document}